\newsavebox{\@brx}
\newcommand{\llangle}[1][]{\savebox{\@brx}{\(\m@th{#1\langle}\)}%
  \mathopen{\copy\@brx\kern-0.5\wd\@brx\usebox{\@brx}}}
\newcommand{\rrangle}[1][]{\savebox{\@brx}{\(\m@th{#1\rangle}\)}%
  \mathclose{\copy\@brx\kern-0.5\wd\@brx\usebox{\@brx}}}
\begin{document}
\def\e#1\e{\begin{equation}#1\end{equation}}
\def\ea#1\ea{\begin{align}#1\end{align}}
\def\eq#1{{\rm(\ref{#1})}}
\theoremstyle{plain}
\newtheorem{thm}{Theorem}[section]
\newtheorem*{thm*}{Theorem}
\newtheorem{lem}[thm]{Lemma}
\newtheorem{prop}[thm]{Proposition}
\newtheorem{cor}[thm]{Corollary}
\newtheorem*{cor*}{Corollary}
\theoremstyle{definition}
\newtheorem{dfn}[thm]{Definition}
\newtheorem{ex}[thm]{Example}
\newtheorem{rem}[thm]{Remark}
\newtheorem{conjecture}[thm]{Conjecture}

\newcommand{\D}{\mathrm{d}}
\newcommand{\A}{\mathcal{A}}
\newcommand{\bL}{\mathbb{L}}
\newcommand{\LL}{\llangle[\Big]}
\newcommand{\RR}{\rrangle[\Big]}
\newcommand{\LD}{\Big\langle}
\newcommand{\bR}{{\mathbb{R}}}
\newcommand{\RD}{\Big\rangle}
\newcommand{\F}{\mathcal{F}}
\newcommand{\HH}{\mathcal{H}}
\newcommand{\X}{\mathcal{X}}
\newcommand{\PP}{\mathbb{P}}
\newcommand{\K}{\mathscr{K}}
\newcommand{\bK}{\mathbb{K}}
\newcommand{\q}{\mathbf{q}}

\newcommand{\op}{\operatorname}
\newcommand{\C}{\mathbb{C}}
\newcommand{\bC}{\mathbb{C}}
\newcommand{\cC}{\mathcal{C}}
\newcommand{\tcC}{{\widetilde{\cC}}}
\newcommand{\N}{\mathbb{N}}
\newcommand{\R}{\mathbb{R}}
\newcommand{\bT}{\mathbb{T}}
\newcommand{\Q}{\mathbb{Q}}
\newcommand{\Z}{\mathbb{Z}}
\newcommand{\bZ}{\mathbb{Z}}
\newcommand{\NE}{\mathrm{NE}}
\newcommand{\bP}{\mathbb{P}}
\newcommand{\cO}{\mathcal{O}}

\renewcommand{\H}{\mathbf{H}}

\newcommand{\si}{\sigma}
\newcommand{\Si}{\Sigma}

\newcommand{\Etau}{\text{E}_\tau}
\newcommand{\E}{{\mathcal E}}
\newcommand{\G}{\mathbf{G}}
\newcommand{\eps}{\epsilon}
\newcommand{\g}{\mathbf{g}}
\newcommand{\im}{\op{im}}

\newcommand{\h}{\mathbf{h}}

\newcommand{\Gmax}[1]{G_{#1}}
\newcommand{\AW}{E}
\providecommand{\abs}[1]{\left\lvert#1\right\rvert}
\providecommand{\norm}[1]{\left\lVert#1\right\rVert}
\newcommand{\abracket}[1]{\left\langle#1\right\rangle}
\newcommand{\bbracket}[1]{\left[#1\right]}
\newcommand{\fbracket}[1]{\left\{#1\right\}}
\newcommand{\bracket}[1]{\left(#1\right)}
\newcommand{\ket}[1]{|#1\rangle}
\newcommand{\bra}[1]{\langle#1|}

\newcommand{\ora}[1]{\overrightarrow#1}

\providecommand{\from}{\leftarrow}
\newcommand{\bl}{\textbf}
\newcommand{\mbf}{\mathbf}
\newcommand{\mbb}{\mathbb}
\newcommand{\mf}{\mathfrak}
\newcommand{\mc}{\mathcal}
\newcommand{\cinfty}{C^{\infty}}
\newcommand{\pa}{\partial}
\newcommand{\prm}{\prime}
\newcommand{\dbar}{\bar\pa}
\newcommand{\OO}{{\mathcal O}}
\newcommand{\hotimes}{\hat\otimes}
\newcommand{\BV}{Batalin-Vilkovisky }
\newcommand{\CE}{Chevalley-Eilenberg }
\newcommand{\suml}{\sum\limits}
\newcommand{\prodl}{\prod\limits}
\newcommand{\into}{\hookrightarrow}
\newcommand{\Ol}{\mathcal O_{loc}}
\newcommand{\mD}{{\mathcal D}}
\newcommand{\iso}{\cong}
\newcommand{\dpa}[1]{{\pa\over \pa #1}}
\newcommand{\Kahler}{K\"{a}hler }
\newcommand{\0}{\mathbf{0}}

\newcommand{\B}{\mathcal{B}}
\newcommand{\V}{\mathcal{V}}

\newcommand{\M}{\mathfrak{M}}
\newcommand{\fp}{\mathfrak{p}}
\newcommand{\fg}{\mathfrak{g}}

\renewcommand{\Im}{\op{Im}}
\renewcommand{\Re}{\op{Re}}

\newcommand{\DD}{\Omega^{\text{\Romannum{2}}}}


\newtheorem{assumption}[thm]{Assumption}
\newcommand{\rd}{{\mathrm{res}}}
\newcommand{\bF}{{\mathbb F}}
\newcommand{\tN}{\widetilde{N}}
\newcommand{\btau}{\boldsymbol{\tau}}
\newcommand{\tF}{{\widetilde{F}}}
\newcommand{\Eff}{{\mathrm{Eff}}}
\newcommand{\tX}{{\widetilde{X}}}
\newcommand{\one}{{\mathbf{1}}}
\newcommand{\tM}{\widetilde{M}}
\newcommand{\orb}{{\mathrm{CR}}}
\newcommand{\BoxS}{\mathrm{Box}(\mathbf \Si)}
\newcommand{\age}{{\mathrm{age}}}
\newcommand{\vir}{{\mathsf{vir}}}
\newcommand{\Def}{{\mathrm{Def}}}
\newcommand{\Aut}{{\mathrm{Aut}}}
\newcommand{\CR}{{\mathrm{CR}}}
\newcommand{\pt}{{\mathrm{pt}}}
\newcommand{\Obs}{{\mathrm{Obs}}}
\newcommand{\bQ}{{\mathbb{Q}}}
\newcommand{\cQ}{{\mathcal{Q}}}
\newcommand{\Nef}{\mathrm{Nef}}
\newcommand{\cI}{{\mathcal{I}}}
\newcommand{\tNef}{{\widetilde{\mathrm{Nef}}}}
\newcommand{\fr}{{\mathfrak{r}}}
\newcommand{\fs}{{\mathfrak{s}}}
\newcommand{\fm}{{\mathfrak{m}}}
\newcommand{\cL}{{\mathcal {L}}}
\newcommand{\ev}{\mathrm{ev}}
\newcommand{\w}{{\mathsf{w}}}
\newcommand{\cT}{{\mathcal{T}}}
\newcommand{\tNE}{{\widetilde{\mathrm{NE}}}}
\newcommand{\cX}{{\mathcal{X}}}
\newcommand{\bmu}{{\mathbold{\mu}}}
\newcommand{\bSi}{{\partial{\Si}}}
\newcommand{\vmu}{ {\vec{\mu}} }
\newcommand{\Mbar}{\overline{\mathcal{M}}}
\newcommand{\cU}{{\mathcal{U}}}
\newcommand{\cW}{{\mathcal{W}}}


\numberwithin{equation}{section}

\title{\bf Open Gromov-Witten Theory of $K_{\bP^2}, K_{{\bP^1}\times {\bP^1}}, K_{W\bP[1,1,2]}, K_{\bF_1}$ and Jacobi Forms}
\author{Bohan Fang, Yongbin Ruan, Yingchun Zhang and Jie Zhou}
\date{}
\maketitle

\begin{abstract}
It was known through the efforts of many works that the generating functions in the closed Gromov-Witten theory of $K_{\bP^2}$
 are meromorphic quasi-modular forms
\cite{Coates:2014fock, Lho:2017, Coates:2018} basing on the B-model predictions \cite{Bershadsky:1993cx, Aganagic:2006wq, Alim:2013eja}. In this article, we extend the modularity phenomenon to $K_{{\bP^1}\times {\bP^1}}, K_{W\bP[1,1,2]}, K_{\mathbb F_1}$. More importantly, we
generalize it to the generating functions in the open Gromov-Witten theory using the theory of Jacobi forms where the open Gromov-Witten parameters are transformed into
elliptic variables.
\end{abstract}

\setcounter{tocdepth}{2} \tableofcontents

\section{Introduction}

Toric Calabi-Yau (CY) manifolds/orbifolds have always occupied a special place in geometry and physics. The combinatorial nature of these objects make them a fertile ground to test new ideas and techniques. In principle, the Gromov-Witten (GW) theory of such a space has been computed in the 90's by the localization technique. However, the solution is in terms of a complicated graph sum formula which makes it not so useful for actual computations. Then, the topological vertex formalism was developed in early 2000 \cite{Aganagic:2003db}. In this past decade, a new formulation of its B-model in terms of mirror curve leads to the {\em Remodeling Conjecture} \cite{BKMP2009} via topological recursion \cite{Eynard:2007invariants}. This conjecture
for toric CY's has been proved by the first author and his collaborators \cite{FLZ16}. However, the topological recursion formalism computes the open GW invariants via a recursion algorithm. From the mathematical point of view, the ultimate goal is to compute its generating functions by closed formula in some sense. These generating functions are quite complicated and it is rare that they can be expressed as elementary functions.
The next attractive classes of functions are modular forms from number theory. Indeed, a great deal of efforts were spent to show that the generating functions of closed GW invariants of $K_{\bP^2}$ are meromorphic quasi-modular forms \cite{Coates:2014fock, Lho:2017, Coates:2018} basing on the earlier results in \cite{Bershadsky:1993ta, Bershadsky:1993cx, Klemm:1999gm, Chiang:1999tz, Yamaguchi:2004bt,  Aganagic:2006wq, Grimm:2007tm, Alim:2008kp, Haghighat:2008gw, Alim:2013eja}. \\

The main purpose of this article is to push further the interaction between GW theory and modular forms to the cases $K_{{\bP^1}\times {\bP^1}}, K_{W\bP[1,1,2]}, K_{\mathbb F_1}$. More importantly, we generalize it to open GW theory. Open GW theory has an additional open parameter keeping track of the number of boundaries. Our key idea is to replace meromorphic quasi-modular forms by certain "quasi-meromorphic Jacobi forms" (see Section \ref{sec:jacobi} for the precise definition), while the open parameter is translated into a certain function of the elliptic variable $z$ of the latter.

Let's briefly recall the definition of Jacobi forms here. A meromorphic function $\Phi$ on $\mathbb{C}\times \mathcal{H}$ is a meromorphic Jacobi form of weight $k\in \mathbb{Z}$,  index $\ell\in \mathbb{Z}$ for the modular group $\Gamma(1)=SL_{2}(\mathbb{Z})$ if it satisfies
\begin{itemize}
		\item
		$\Phi({z\over c\tau+d}, {a\tau+b\over c\tau+d})
		=(c\tau+d)^{k} e^{2\pi i  \ell  {cz^{2}\over c\tau+d}} \Phi(z,\tau)\,,
		\quad
		\forall
		\begin{pmatrix}
		a & b\\
		c& d
		\end{pmatrix}\in \Gamma(1)\
		$\,,
		\item
		$\Phi(z + m\tau+n, \tau)
		=e^{-2\pi i \ell (m^2 \tau+2m z)}
		\Phi(z,\tau)\,, 	\quad\forall m,n\in \mathbb{Z}
		$\,.
\end{itemize}
together with some regularity condition. It is ``modular'' in $\tau$ and ``elliptic'' in $z$.
See Section \ref{sec:jacobi} (e.g., Definition \ref{dfnJacobiforms}) for detailed definitions on holomorphic and meromorphic Jacobi forms for a modular subgroup $\Gamma<SL_{2}(\mathbb{Z})$. The set of all such meromorphic Jacobi forms gives a graded ring $\mathcal J(\Gamma)$. Adjoining the quasi-modular Eisenstein series $E_2$, we get the graded ring of \emph{quasi-meromorphic Jacobi forms}.

\begin{ex}
The Weierstrass $\wp$-function
\begin{equation}
\wp(u,\tau)={1\over u^2}+\sum_{(m,n)\in\mathbb{Z}\oplus\mathbb{Z}\textbackslash\{(0,0)\}}\Big({1\over (u+m\tau+n)^2}-{1\over (m\tau+n)^2}\Big)\,
\end{equation}
is a meromorphic Jacobi form of weight $2$, index $0$ for the modular group $SL_2(\mathbb{Z})$.
\end{ex}

The main result of this article is about the generating series $F_{g,n}$ of open GW invariants (called open-closed GW potential)--for its definition see \eqref{eqn:Fgn} below. It collects open-closed GW invariants of genus $g$ with $n$ boundary components into a generating series. When $n=0$ this is the usual closed GW potential and is denoted by $F_g$. The quantity $F_{g,n}$ is a formal power series of the closed parameters $Q=(Q_{1},\cdots)$ and open parameters $X=(X_1,\dots,X_n)$.  In our four examples, $Q=Q_1$ or $Q=(Q_1, Q_2)$, corresponding to the K\"ahler parameters $T_{1}$ or $(T_{1},T_{2})$ by $Q_{k}=e^{T_{k}}, k=1,2$.

Recall that the mirror map can be derived within GW theory by using Givental's $I$-functions. In our cases, it is  a bi-holomorphic map
$\mathbf{m}: \Delta_{Q}\rightarrow \Delta_{q}$
from a certain neighborhood $\Delta_{Q}$ of the large radius limit $Q=0$ in $\mathbb A^1$ or $\mathbb{A}^{2}$ to such an one $\Delta_{q}$ of the large complex structure limit $q=(q_{1},q_{2})=0$ in $\mathbb A^1$ or  $\mathbb{A}^{2}$.
The parameters $q$ appear naturally as complex parameters in the defining equations of the mirror curve family
$\chi: \mathcal{C}\rightarrow \mathcal{U}_{\mathcal{C}}$
of the toric CY, see \cite{Hori:2000kt}.
Induced by the mirror map $\mathbf{m}$, the parameter $X_k$ gets identified with a function of a rational function $x_k$ on the mirror curve. They are explicit hypergeometric-like series (see for example \eqref{eqnopenmodulus}) with nice leading order behavior \eqref{eqn:mirror-map}.

As will be discussed in Section \ref{sec:one-parameter}, when there are two K\"ahler parameters we need to restrict to a non-trivial one-parameter family
so that we can employ the theory of modular forms.
Namely, we choose a rational affine curve $\mathcal{U}_{\rd}$ in the base $\mathcal{U}_{\mathcal{C}}$ of the mirror curve family whose Zariski closure includes $q=0$, then
we take the fiber product to get the restriction of the family $\chi_{\rd}: \mathcal{C}_{\rd}\rightarrow \mathcal{U}_{\rd}$.
From the perspective of the A-model, this corresponds to the restriction to the preimage under $\mathbf{m}$
of the (analytification) of the subvariety
$\mathcal{U}_{\rd} \cap \Delta_{q}$.

After the restriction to the one-parameter family, $q_1$ and $q_2$ are modular functions (for a certain modular subgroup $\Gamma$
depending on $\chi_{\rd}$) in
the complex structure parameter $\tau$
for the mirror curve lying on the upper-half plane $\mathcal{H}$. 
In this way $Q$ also becomes a function of  $\tau$, although it is not modular (see e.g. Example \ref{exKP2continued}).
In fact $\tau$ has a purely A-model expression
$\tau=\partial_{t}^2F_0$,
where $t$ is a certain $\bZ$-linear combination of $T_1$ and $T_2$ (or simply $T_1$ for the one K\"ahler parameter case).
The parameter $t$ is called the flat coordinate for the one-parameter subfamily.
A different choice of such combination amounts to an $SL_{2}(\bZ)$-transformation on $\tau$ which still represents the same complex structure of the mirror curve.
See Section \ref{sec:one-parameter} for details on this.

We then make use of the uniformization to express the rational function $x$ on the mirror curve
as
$x(u,\tau)$, in terms of meromorphic modular and Jacobi forms.
Here $u\in \bC$ is the universal cover of the mirror curve, which is isomorphic to $\bC/(\bZ\oplus \tau \bZ)$ with  $\tau \in \mathcal H$. Thus one may regard
the formal series $F_{g,n}(Q,X_{1},\cdots X_{n})$ as one in $(\tau, u_1,\dots, u_n)$.

One of the main results of this article is to show that above $F_{g,n}(Q,X_{1},\cdots X_{n})$ for the examples $K_{\bP^2}, K_{{\bP^1}\times {\bP^1}}, K_{W\bP[1,1,2]}, K_{\bF_1}$ are quasi-meromorphic
Jacobi forms under the mirror map
$\mathbf{m}$.

\begin{thm*}[Theorem \ref{thmholhighergenusWgn}]
The following statements hold for $d_{X_1}\dots d_{X_n}F_{g,n}$ with $2g-2+n> 0$.

\begin{enumerate}
\item The differential $d_{X_1}\dots d_{X_n}F_{g,n}$ is a quasi-meromorphic Jacobi form of total weight $n$.
\item The closed GW potential $F_{g}$ is a meromorpic quasi-modular form of weight zero.
\end{enumerate}
\end{thm*}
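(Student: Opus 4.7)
The plan is to proceed by induction on $2g-2+n$, using the Eynard--Orantin topological recursion on the mirror curve together with the Remodeling Conjecture proved in \cite{FLZ16}, which identifies $d_{X_1}\cdots d_{X_n} F_{g,n}$, after the mirror map, with a meromorphic symmetric multi-differential $\omega_{g,n}$ on the mirror curve. Once we restrict to the one-parameter subfamily described in Section \ref{sec:one-parameter}, each fibre of the mirror curve is an elliptic curve uniformized by $u\in \bC/(\bZ\oplus \tau\bZ)$, and the rational coordinate $x$ becomes a meromorphic Jacobi form in $(u,\tau)$. We may therefore regard $\omega_{g,n}$ as a meromorphic function of $(u_1,\ldots,u_n,\tau)$ and ask whether it is a quasi-meromorphic Jacobi form of total weight $n$.

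First I would analyze the initial data. The $1$-form $\omega_{0,1}=y\,dx$ should be written in $(u,\tau)$-coordinates and checked to be a meromorphic Jacobi differential of weight $1$ via the explicit Jacobi-form expression for $x(u,\tau)$ coming from the uniformization. The Bergman kernel $\omega_{0,2}$, which on an elliptic curve admits the canonical modular completion
\[
\omega_{0,2}(u_1,u_2,\tau) = \Bigl(\wp(u_1-u_2,\tau) - \tfrac{\pi^2}{3} E_2(\tau)\Bigr)du_1\,du_2,
\]
is a quasi-meromorphic Jacobi form of weight $2$ and index $0$. The $E_2$-piece is the sole origin of the \emph{quasi} decoration and, by propagating through the recursion, forces all higher $\omega_{g,n}$ into the polynomial ring over $E_2$.

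For the inductive step one writes
\[
\omega_{g,n+1}(p_0,p_1,\ldots,p_n) = \sum_{\alpha} \operatorname{Res}_{p=\alpha} K(p_0,p)\Bigl[\omega_{g-1,n+2}(p,\sigma_\alpha(p),p_1,\ldots,p_n) + {\sum}'\,\omega_{g_1,|I|+1}\,\omega_{g_2,|J|+1}\Bigr]
\]
and verifies that each ingredient preserves the Jacobi-form structure with the correct weight. The recursion kernel $K(p_0,p)$, built from $\omega_{0,2}$ and $\omega_{0,1}$, transforms as a quasi-meromorphic Jacobi object carrying weight $-1$ in $u_0$; the residue operation at the ramification points of $x$ is an intrinsic invariant of the mirror curve, independent of the uniformization, and therefore commutes with the $SL_2(\bZ)$-action and with lattice translations; and the weights across the bilinear bracket add up, by the inductive hypothesis, to $n+1$ in the $n+1$ elliptic variables.

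The main obstacle is ensuring that the quasi-modular $E_2$-dependence introduced through $\omega_{0,2}$ propagates correctly under the recursion, i.e.\ that the elliptic anomaly in each residue either cancels or recombines into a further $E_2$ contribution without breaking the Jacobi transformation law. A clean way to control this is to re-express both $K$ and $\omega_{0,2}$ in terms of theta functions, making the $SL_2(\bZ)$-transformation and the lattice quasi-periodicity manifest, so that the Jacobi type is preserved by construction and the $E_2$-anomaly is tracked by a single explicit generator. Part (2) then follows by specializing the above argument to $n=0$: the recursion output carries no elliptic variables, hence $F_g$ lands in the subring of quasi-meromorphic Jacobi forms with no $z$-dependence, namely the quasi-modular forms, and the same weight count yields weight $0$; the base cases $F_0, F_1$ are supplied by the explicit computations used in \cite{Coates:2014fock, Lho:2017, Coates:2018} and their analogues for the remaining three examples.
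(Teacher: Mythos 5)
Your overall strategy (induction through the Eynard--Orantin recursion, with the base cases $\omega_{0,1}$, $\omega_{0,2}$ checked by hand) is the same as the paper's, but the proposal has a genuine gap at exactly the point where the paper's arithmetic input lives. You assert that ``the residue operation at the ramification points of $x$ is an intrinsic invariant of the mirror curve \ldots and therefore commutes with the $SL_2(\bZ)$-action and with lattice translations.'' This is not a proof: taking residues and Taylor coefficients of a Jacobi form at a point $u_r(\tau)$ that moves with $\tau$ does not in general produce modular forms. The paper's argument hinges on Lemma \ref{lemramificationofhyperellipticgenusonecurves}, which uses the hyperelliptic structure $x:C\to\bP^1$ to identify the ramification locus $R^{\circ}$ with the group of $2$-torsion points, together with the classical fact (Section \ref{secmodularityattorsion}) that Taylor coefficients of $\wp$, $\wp'$ at torsion points are meromorphic modular forms for the stabilizing subgroup --- this is why $\Gamma(2)\cap\Gamma$ appears in the coefficient ring $\widehat{\mathcal K}$ of Theorem \ref{thmhighergenusWgn}, and it is also needed (via Lemma \ref{lemmodularityofvaluesatramificationpoints}) to show the values of $x,y$ at the ramification points are modular. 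Without identifying $R^{\circ}$ with the $2$-torsion, the inductive step does not close.

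Two further points. First, you take $\omega_{0,1}=y\,dx$, but for the remodeling conjecture the relevant differential is $\lambda=\log y\,\frac{dx}{x}$; the logarithm is a real issue, and the paper handles it by expanding $\lambda-\lambda^{*}$ near each ramification point as the series $\Lambda=2\sum_{k\ge 0}\frac{1}{2k+1}\bigl(\frac{y-y^{*}}{y+y^{*}}\bigr)^{2k+1}\partial_u x\,\frac{du}{x}$ in \eqref{eqndifferenceoflambda}, using that $y\ne 0$ on $R^{\circ}$, so that $1/\Lambda$ has modular Laurent coefficients \eqref{eqnLambdaexpansionismodular}. With $y\,dx$ you would be proving a statement about different invariants. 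Second, your mechanism for controlling the $E_2$-anomaly (rewriting $K$ and $\omega_{0,2}$ in theta functions) is left vague; the paper's mechanism is to run the entire recursion with the Schiffer kernel $S=(\wp(u_1-u_2)+\widehat\eta_1)du_1\boxtimes du_2$, whose $\widehat{E_2}$-completion makes every $\widehat\omega_{g,n}$ an honestly modular almost-meromorphic Jacobi form (Theorem \ref{thmhighergenusWgn}), and only then take the holomorphic limit $\mathrm{Im}\,\tau\to\infty$ to land on the quasi-meromorphic Jacobi forms of the statement. (Also note the sign: the completion is $\wp+\frac{\pi^2}{3}E_2$, not $\wp-\frac{\pi^2}{3}E_2$.) For Part 2, the paper does not quote the literature for the base case but computes $\hat F_1$ via the Bergman $\tau$-function and obtains $F_g$, $g\ge 2$, from $\frac{1}{2-2g}\sum_{r}\mathrm{Res}_r(d^{-1}\lambda\cdot\widehat\omega_{g,1})$, which again requires the torsion-point modularity above.
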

Here we say the differential $d_{X_1}\dots d_{X_n}F_{g,n}$ is Jacobi if its coefficient
with respect to the basis $du_1\boxtimes\dots\boxtimes du_n$, which is a meromorphic function in any $u_{k}$, is Jacobi in $(u_k,\tau)$.

Part 2 of the above theorem applied to $K_{\bP^2}$ recovers the results in \cite{Lho:2017}
recently proved basing on the earlier results in e.g. \cite{Bershadsky:1993cx, Alim:2013eja}.
Restricting to the subfamily obtained by setting $T_{1}=T_{2}$ for the $K_{\mathbb{P}^{1}\times \mathbb{P}^{1}}$ case, it recovers the results in
\cite{Lho:2018} on this particular subfamily.\\

Applying some elementary properties of modular forms and Jacobi forms, we obtain the following corollary of the above theorem.
\begin{cor*}[Corollary \ref{corGWexpansionofWgn}]
	The Taylor coefficients of $F_{g,n} $ in a certain $X$-expansion
	are meromorphic quasi-modular forms.
	\end{cor*}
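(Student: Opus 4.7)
The plan is to combine Theorem~\ref{thmholhighergenusWgn} with the classical fact that Taylor coefficients of (quasi-)meromorphic Jacobi forms around a regular point of the elliptic variable are quasi-modular forms in $\tau$ (an Eichler--Zagier type development, extended to the quasi-meromorphic setting). Concretely, from the theorem one has
\begin{equation*}
d_{X_1}\cdots d_{X_n}F_{g,n} \;=\; \phi(u_1,\dots,u_n,\tau)\,du_1 \boxtimes \cdots \boxtimes du_n,
\end{equation*}
with $\phi$ quasi-meromorphic Jacobi of total weight $n$ in each $(u_k,\tau)$; equivalently,
\begin{equation*}
\frac{\partial^n F_{g,n}}{\partial X_1 \cdots \partial X_n} \;=\; \phi(u,\tau)\prod_{k=1}^{n}\frac{du_k}{dX_k}.
\end{equation*}
The Jacobian factors $du_k/dX_k$ come from differentiating the explicit hypergeometric-like series $X_k = X_k(u_k,\tau)$ supplied by the open mirror map, and they are themselves meromorphic Jacobi in each $(u_k,\tau)$; hence the entire right-hand side is quasi-meromorphic Jacobi in each elliptic variable.

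Next I would invoke, variable by variable, the statement that if $\Psi(z,\tau)$ is quasi-meromorphic Jacobi of weight $k$ and index $m$, then its Taylor expansion
\begin{equation*}
\Psi(z,\tau) \;=\; \sum_{j\ge 0}\psi_j(\tau)\,(z-z_0)^j
\end{equation*}
about a regular point $z_0$ has coefficients $\psi_j(\tau)$ that are quasi-modular forms of weight $k+j$, with the index $m$ absorbed through a heat-operator conjugation. Applying this in each $u_k$ and then converting the resulting $u_k$-expansion into an $X_k$-expansion via the open mirror map (whose coefficients are themselves quasi-modular in $\tau$) produces Taylor coefficients of $\partial^n F_{g,n}/\partial X_1\cdots \partial X_n$ in $X$ that are quasi-modular forms. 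Integrating termwise in each $X_k$ preserves quasi-modularity of coefficients, since the operator $X_k^{m}\mapsto X_k^{m+1}/(m+1)$ only rescales by rationals; the integration ``constants'' depend on strictly fewer open parameters and are quasi-modular by induction on $n$, the base case $n=0$ being Part~2 of the theorem.

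The main obstacle I expect is the careful weight/index bookkeeping across the change of variables $u\leftrightarrow X$: each Jacobian $du_k/dX_k$ carries its own modular weight and index, and these must combine with those of $\phi$ so that the final quasi-modular weights attached to the Taylor coefficients come out consistently with the total weight $n$ in the theorem. One must also verify that the expansion point $X_k^{(0)}$ used in the corollary corresponds, via the mirror map, to a regular point $u_k^{(0)}$ of all of the Jacobi forms in question, so that the expansion is a genuine power series in $X_k-X_k^{(0)}$ with no Laurent tails. Both of these are controlled by the uniformization $x(u,\tau)$ of the mirror curve and by the leading-order behavior of the open mirror map, but they constitute the non-trivial part of the argument.
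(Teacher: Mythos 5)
Your overall strategy --- expand the quasi-meromorphic Jacobi form $\omega_{g,n}$ in the elliptic variables at the open GW point and read off quasi-modular Taylor coefficients, keeping track of the Jacobian factors $du_k/dX_k$ of the open mirror map --- is the same as the paper's chain-rule computation. But the key lemma you invoke is stated too strongly, and the gap it leaves is precisely the nontrivial point of the paper's argument. The Eichler--Zagier fact (recalled in Section \ref{secmodularityattorsion}) is that the Taylor coefficients of a Jacobi form at $z_0=x_0+y_0\tau$ are modular for the \emph{stabilizer} of $z_0$ in $\Gamma$; this yields a finite-index, hence useful, modular group only when $x_0,y_0$ are rational, i.e.\ when $z_0$ is a torsion point. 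At an arbitrary ``regular point'' the stabilizer can be trivial, and the coefficients $\wp^{(j)}(z_0,\tau)$ need not be modular in any sense. The expansion point here is the open GW point $\mathfrak{s}_0=(0,-1)$ of \eqref{eqnopenGWpoint}, whose $u$-coordinate is identified as a torsion point only in some of the four examples (a $3$-, $4$-, or $8$-torsion point for $K_{\bP^2}$, $K_{W\bP[1,1,2]}$, $K_{\bP^1\times\bP^1}$ respectively); for the $K_{\bF_1}$ subfamilies the paper explicitly declines to determine it. So ``apply the Taylor-coefficient lemma at a regular point'' does not close the argument.

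The paper supplies the missing step differently: since $(x,y)=(0,-1)$ is fixed independently of the moduli, the uniformization forces $\wp(u_{\mathfrak{s}_0}+\epsilon)$, $\wp'(u_{\mathfrak{s}_0}+\epsilon)$ and $\partial_u^{m}x\rvert_{\mathfrak{s}_0}$ to be meromorphic modular forms, as in \eqref{eqnmodularityofvaluesatopenGWofxy}; the addition formula \eqref{eqnadditionformulaforwp} then expresses $\wp(u_{\mathfrak{s}_0}-u_r)$ algebraically in these together with the values at the $2$-torsion ramification points, which are modular by Lemma \ref{lemmodularityofvaluesatramificationpoints} and Section \ref{secmodularityattorsion} --- no torsion hypothesis on $\mathfrak{s}_0$ is needed. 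You would need to add this (or an equivalent) argument. Two smaller points: all Jacobi forms appearing here have index $0$, so no heat-operator conjugation is required; and since every $\mu_j\ge 1$ in \eqref{eqn:Fgn}, passing from $d_{X_1}\cdots d_{X_n}F_{g,n}$ back to $F_{g,n}$ produces no integration constants, so your induction on $n$ to control them is unnecessary.
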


\begin{ex}[Proposition \ref{propdiskpotentialJacobi}]
	\label{diskpotentialex}
The disk potential $\partial_x{W}:=(\log y) /x$ involves the logarithm of a meromorphic Jacobi form. For the $K_{\mathbb{P}^2}$ case, this is
\begin{eqnarray*}
\partial_x{W}={\log\left(\kappa^{3}\wp '(u)+{3\over 2}(-4)^{1\over 3}\kappa^2\phi\wp(u)+{9\over 8}\phi^3-{1\over 2}\right)\over -3(-4)^{1\over 3}\kappa^{2} \phi \wp (u)-{9\over 4}\phi^{3}}\,.
\end{eqnarray*}
\end{ex}

\begin{ex}[Proposition \ref{lemannuluspotentialJacobi}]
	\label{annuluspotentialex}
The annulus potential is
\begin{eqnarray*}
\omega_{0,2}(u_1,u_2)=(\wp(u_1-u_2)+ {\pi^{2}\over 3} E_2)du_1\boxtimes du_2\,.
\end{eqnarray*}
It is a quasi-meromophic Jacobi form of weight $2$, index $0$.
\end{ex}

\begin{ex}[Theorem \ref{thmholhighergenusWgn}]
	\label{w03ex}
	For the $(g,n)=(0,3)$ case, one has
\begin{eqnarray*}
d_{X_1}d_{X_2}d_{X_3}F_{0,3}&=&\omega_{0,3}(u_{1},u_{2},u_{3})\nonumber\\
&=&\sum_{r\in R^{\circ}}  \left(  2[{1\over \Lambda} ]_{-2}\cdot
\prod_{k=1}^{3}
(\wp(u_k-u_r)+ {\pi^{2}\over 3} E_2)
\right) du_1\boxtimes du_2\boxtimes du_3\,.
\end{eqnarray*}
It is a quasi-meromorphic Jacobi form of total weight $3$, index $0$ for a certain modular group. For the $K_{\mathbb{P}^2}$ case, one has
\begin{eqnarray*}
[{1\over \Lambda} ]_{-2}=
{1\over \wp''^2(u_r)} {{1\over 2}\left(1-3
	(-4)^{1\over 3}\kappa^{2}   \phi \wp(u_r)-{9\over 4 }\phi^{3}\right)\over \kappa^{3}}  {(-4)^{1\over 3}\kappa^{2} \wp(u_r) +{3\over 4}\phi^{2} \over -2{ (-4)^{1\over 3}\kappa^{2}
}} \,.
\end{eqnarray*}
\end{ex}
In Example \ref{diskpotentialex} and Example \ref{w03ex} above,
\begin{equation*}
\phi(\tau)=\Theta_{A_{2}}(2\tau) {\eta(3\tau) \over \eta(\tau)^{3}  }\,,
\quad
\kappa=\zeta_{6}\,2^{-{4\over 3}} 3^{1\over 2} \pi^{-1}{\eta(3\tau)\over \eta(\tau)^{3}}\,,\nonumber
\end{equation*}
with $\Theta_{A_{2}}(2\tau)$, ${\eta(3\tau)  \eta(\tau)^{-3}}$ modular forms for the modular group $\Gamma_0(3)$, see \cite{Zagier:2008, Maier:2009, Maier:2011} for details.
Also
\begin{equation*}
R^{\circ}=\{u_{r}={1\over 2}, {\tau\over 2}, {1+\tau\over 2}\}\,,
\end{equation*}
and
\begin{eqnarray*}
\wp({1\over 2})={\pi^{2}\over 3} (\theta_{3}^4+\theta_{4}^4)\,,\quad
\wp({\tau\over 2})={\pi^{2}\over 3} (-\theta_{2}^4-\theta_{3}^4)\,,\quad
\wp({1+\tau\over 2})={\pi^{2}\over 3}  (\theta_{2}^4-\theta_{4}^4)\,,
\end{eqnarray*}
with $\wp''(u_{r})=6 \wp^{2}(u_{r})-{2\over 3}\pi^{4}E_{4}$.
More formulae for the $K_{\mathbb{P}^1 \times \mathbb{P}^1}$, $K_{W\mathbb{P}[1,1,2]}$ and $K_{\mathbb{F}_1}$ cases can be found in Appendix \ref{secappendix}.\\

With a structure theorem
in Theorem \ref{thmhighergenusWgn} in hand, we can combine the holomorphic anomaly equation of \cite{Eynard:2007holomorphic} with the above results and arrive at the Yamaguchi-Yau type equation. We refer to Theorem \ref{thmhae} for its full form, but just state the result for the closed sector here.

\begin{thm*}[Theorem \ref{thmhae}]
 The closed GW potentials $F_{g}$ $(g\geq 2)$ satisfy
\begin{equation*}
{\partial \over \partial \eta_{1}} F_{g}= {\partial_{Y} S^{00} \over \partial_{Y}\eta_{1}} \cdot {1\over 2}\left(\partial_{t}\partial_{t}F_{g-1}+\sum_{ g_1=1}^{g-1}\partial_{t}F_{g_{1}} \cdot \partial_{t}F_{g_{2} } \right)\,.
\end{equation*}
Here $\eta_1=(\pi^2/3)E_2$.
\end{thm*}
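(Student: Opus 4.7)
\medskip

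\noindent\textbf{Proof proposal.} The plan is to combine the Eynard--Orantin holomorphic anomaly equation of \cite{Eynard:2007holomorphic} with the quasi-modular structure already established in Theorem \ref{thmhighergenusWgn}, and then rewrite the anomaly with respect to $\bar\tau$ as an anomaly with respect to $\eta_{1}=(\pi^{2}/3)E_{2}$ via the chain rule. The key observation is that, because $F_{g}$ is a meromorphic quasi-modular form of weight zero in the one-parameter family, it lies in the polynomial ring $\widehat{M}(\Gamma)=M(\Gamma)[E_{2}]$; consequently $\partial/\partial\eta_{1}$ acts as a well-defined derivation on $F_{g}$, extracting precisely the $E_{2}$-dependence. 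The right-hand side of the asserted identity will arise by pushing this derivation through the EO anomaly equation.

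First, I would recall the EO form of the holomorphic anomaly, which states that for any modulus $Y$ entering the mirror curve family (here a coordinate on $\mathcal{U}_{\rd}$), one has an identity of the schematic shape
\begin{equation*}
\frac{\partial F_{g}}{\partial S^{00}}=\frac{1}{2}\Bigl(\partial_{t}\partial_{t}F_{g-1}+\sum_{g_{1}=1}^{g-1}\partial_{t}F_{g_{1}}\cdot \partial_{t}F_{g-g_{1}}\Bigr)\,,
\end{equation*}
where $S^{00}$ is the Bergman-kernel/propagator piece carrying the non-holomorphic (or, equivalently, $E_{2}$-dependent) part of the theory and $t$ is the flat coordinate introduced in Section \ref{sec:one-parameter}. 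The factor $1/2$ and the appearance of $\partial_{t}\partial_{t}F_{g-1}$ together with the convolution in lower genera is the universal content of the topological recursion anomaly; I would invoke this directly.

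Next, I would convert the derivative $\partial/\partial S^{00}$ into $\partial/\partial\eta_{1}$. Theorem \ref{thmhighergenusWgn}(2) places $F_{g}$ inside $\widehat{M}(\Gamma)$, and the structure result of Theorem \ref{thmhighergenusWgn} identifies $S^{00}$, viewed as a function on $\mathcal{U}_{\rd}$, as an element of this same ring whose dependence on the quasi-modular generator $E_{2}$ is affine and nondegenerate. Writing the chain rule
\begin{equation*}
\frac{\partial}{\partial\eta_{1}}=\frac{\partial S^{00}}{\partial\eta_{1}}\cdot\frac{\partial}{\partial S^{00}}\,,
\end{equation*}
and re-expressing the Jacobian via the $Y$-coordinate as $\partial_{Y}S^{00}/\partial_{Y}\eta_{1}$, gives the stated prefactor. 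Substituting into the EO anomaly produces the target equation.

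The technically subtle step, which I expect to be the main obstacle, is the precise matching of conventions: making sure that the ``propagator'' $S^{00}$ from topological recursion (a meromorphic differential built from the Bergman kernel on the mirror curve restricted to $\mathcal{U}_{\rd}$) coincides, up to the correct normalization, with the $E_{2}$-linear piece in the quasi-modular expansion. Concretely, one must verify that under the uniformization of the mirror curve the Bergman kernel picks up the Zagier/Weil non-holomorphic term exactly in the form $(\pi^{2}/3)(E_{2}-3/(\pi\operatorname{Im}\tau))$, so that replacing the $\bar\tau$-derivation by $\partial/\partial\eta_{1}$ is consistent. Once this normalization is pinned down using the explicit $\tau=\partial_{t}^{2}F_{0}$ relation from Section \ref{sec:one-parameter} and the formulae appearing in Example \ref{w03ex}, the remainder is bookkeeping.
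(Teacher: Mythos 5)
Your overall strategy is the one the paper follows: start from the Eynard--Orantin holomorphic anomaly equation, use the polynomial structure of Theorem \ref{thmhighergenusWgn} in the single quasi-modular generator to trade the anomaly derivative for $\partial/\partial\eta_1$ by the chain rule, and identify the resulting Jacobian with $\partial_Y S^{00}/\partial_Y\eta_1$. However, two points in your write-up are genuinely imprecise and would need repair before the argument closes.

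First, you misidentify $Y$: in Section \ref{secholomorphicanomalyequations} it is \emph{not} a modulus of the mirror curve family but the anti-holomorphic generator $Y=-\pi/\mathrm{Im}\,\tau$. This matters because the equality $\partial_{\bar t_0}S^{00}/\partial_{\bar t_0}\hat\eta_1=\partial_Y S^{00}/\partial_Y\hat\eta_1$ is exactly what lets one compute the prefactor as a ratio of the $Y$-linear coefficients of two almost-holomorphic modular forms (using that $S^{00}$, defined by \eqref{eqnclosedstringpropagator}, and $\hat\eta_1$ from \eqref{eqngenusoneSchifferkernel} are both \emph{linear} in $Y$); if $Y$ were a base coordinate the ratio would be a different quantity. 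Second, you apply the anomaly equation directly to the holomorphic $F_g$ and write ``$\partial F_g/\partial S^{00}$'' as if it were the literal content of \cite{Eynard:2007holomorphic}. What is actually proved there is the $\bar\partial$-anomaly \eqref{eqnHAEfortopologoicalrecursion} for the \emph{non-holomorphic} $\hat F_g$ built from the Schiffer kernel, and $\partial/\partial S^{00}$ of a holomorphic quantity is not a priori well defined ($S^{00}$ is only fixed up to a holomorphic ambiguity). The paper bridges this by observing that, thanks to Theorem \ref{thmhighergenusWgn}, the rewritten anomaly is an identity of polynomials in $Y$ with holomorphic coefficients, and then extracting the degree-zero term in $Y$ (the holomorphic limit, under which $\hat F_g\to F_g$, $\hat\eta_1\to\eta_1$, and the covariant derivatives $D_0$ collapse to $\partial_{t_0}$ since $\kappa=-1/(\tau-\bar\tau)\to 0$). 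Your proposal skips this step entirely; without it the passage from the non-holomorphic anomaly to the stated equation for $F_g$ does not follow.
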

We refer to Section \ref{secholomorphicanomalyequations} for the definition of $S^{00}$ and $Y$, but only remark that in our cases $ {\partial_{Y} S^{00} / \partial_{Y}\eta_{1}} $ is a constant number. For example for $\cX=K_{\bP^2}$, this is $3/(2\pi^2$), see Example \ref{exKP2continued}. Our theorem recovers the Yamaguchi-Yau equation for $K_{\bP^2}$ as shown in \cite{Lho:2017}
recently proved basing on the earlier results in e.g. \cite{Bershadsky:1993cx, Alim:2013eja}.
Restricting to the subfamily obtained by setting $T_{1}=T_{2}$ for the $K_{\mathbb{P}^{1}\times \mathbb{P}^{1}}$ case, this theorem recovers the
Yamaguchi-Yau equation proved in \cite{Lho:2018} on this particular subfamily.

\subsection*{Outline of the proof}

We review toric geometry, mirror symmetry and the remodeling conjecture for our four examples in Section \ref{sec:review}. The four examples in the article are chosen for the fact that their mirror curves are genus one algebraic curves equipped
with hyperelliptic structures determined by the structure of the branes. The genus one and hyperelliptic structure allow us to apply some arithmetic geometry of elliptic curves in the study of topological recursion on the mirror curve. In Section \ref{sec:arithmetic} we show that the hyperelliptic structure implies the ramification points are identified with the group of $2$-torsion points on the elliptic curve. We also explicitly give uniformization results for the mirror curve families in Section \ref{sec:arithmetic}. In Section \ref{sec:proof}, an examination following the procedure in topological recursion then shows that the differentials $\{\omega_{g,n}\}_{g,n}$,
produced by residue calculus near the ramification points, are quasi-meromorphic Jacobi forms lying in a ring with very simple generators. A structure theorem of $\{\omega_{g,n}\}_{g,n}$, relating the weights, poles of the quasi-meromorphic Jacobi forms $\{\omega_{g,n}\}_{g,n}$ to the genus $g$ and number of boundary components $n$, follows by induction. This then offers a rigorous proof of the Yamaguchi-Yau type holomorphic anomaly equations for $d_{X_1}\dots d_{X_n}F_{g,n}$, basing on the equations \cite{Eynard:2007holomorphic} satisfied by $\{\omega_{g,n}\}_{g,n}$ and the proof in \cite{FLZ16}
stating that
$d_{X_1}\dots d_{X_n}F_{g,n}=\omega_{g,n}$.
Furthermore, on the mirror curve there is a distinguished point around which the expansions of GW potentials give rise to open GW invariants.
The results on uniformization imply that the Taylor coefficients at this point of the GW potentials which are now regarded as quasi-meromorphic Jacobi forms, are meromorphic quasi-modular forms.

We remark that the hyperelliptic structure is crucial in our discussion. There are 12 other local  toric CY 3-folds whose mirror curves are in hyperelliptic forms. In principle, our technique applies to these examples as well. At this point, we are unsure about the compatibility of their hyperelliptic structures and the remodeling conjecture-- we hope to come back in another time (see Remark \ref{rem:other-cases} for a more technical discussion). A more exciting future direction is the case of genus two mirror curve, in which the ramification data can be also made intrinsic from the hyperelliptic structure. We will leave it to another paper.

\subsection*{Acknowledgement}

B. F. would like to thank Chiu-Chu Melissa Liu and Zhengyu Zong for enlightening discussion.
J. Z. would like to thank Murad Alim, Florian Beck, Kathrin Bringmann, Xiaoheng Jerry Wang and Baosen Wu for useful discussions.
The authors are very grateful to the anonymous referee for the great improvement of this article.

Y. R. is partially supported by NSF grant DMS 1405245 and NSF FRG grant DMS 1159265.
Y. Z. is supported by China Scholarship Council grant No. 201706010026.
J. Z. 's work was done while he was a postdoc at the University of Cologne and
was partially supported by German Research Foundation Grant CRC/TRR 191.

\section{A brief review of the solution of Remodeling Conjecture}
\label{sec:review}

\subsection{Toric Calabi-Yau $3$-orbifolds and mirror curves}

Let $N'= \bZ^2$ and $N=N'\oplus \bZ$. Then $N'_\bR=N'\otimes_{\bZ} \bR\cong \bR^2$, and $N_\bR=N\otimes_{\bZ} \bR =N'_\bR \times \bR$. Consider the  triangulated polytopes in $N'_\bR$ in Figure \ref{fig:defining-polytope}. We embed $N'_\bR\hookrightarrow N_\bR$ as $N'_\bR \times\{1\}$. Then the cones over these triangulated polytopes with the vertex at $0\in N_\bR$ are automatically fans in the definition for toric varieties -- $3$ cones are cones over faces in the triangulation; $2$ cones are cones over edges while $1$-cones are cones over vertices. The origin $0$ in $N_\bR$ is the $0$-cone. The triangulations in Figure \ref{fig:defining-polytope} give rise to fans which define toric Calabi-Yau $3$-orbifolds $\cX=K_{\mathbb P^2}, K_{{\bP^1}\times {\bP^1}}, K_{W\bP[1,1,2]}, K_{\mathbb F_1}$.
\begin{figure}[h]
  \centering{
  \resizebox{150mm}{!}{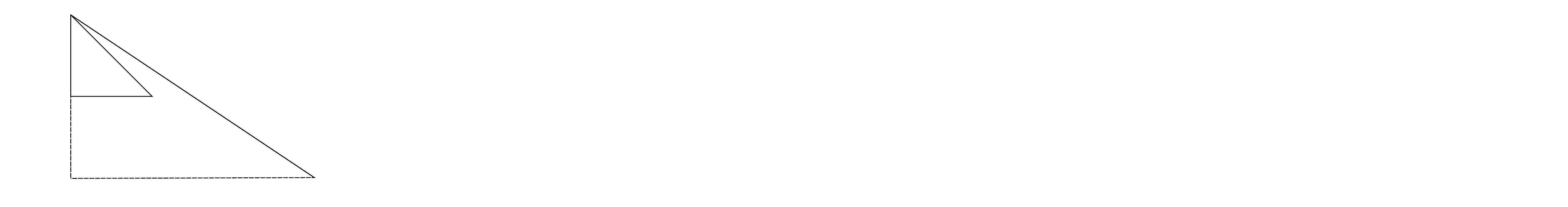}
  \caption{The defining polytopes of $K_{ \bP^2}$, $K_{\bP^1\times \bP^1}$, $K_{W\bP[1,1,2]}$ and $K_{\mathbb F_1}$ respectively. The polytopes are in gray and their triangulations are in solid lines. They are fitted into a triangle in dashed lines, which ensures that the mirror curves are in hyperelliptic forms.}
  \label{fig:defining-polytope}
  }
\end{figure}

We denote the defining polytope by $\Delta$. We notice that all of these polytopes are contained in a triangle with vertices $(0,-1),(0,1),(4,-1)$ -- the goal is to ensure that the mirror curves are in hyperelliptic form (see \eqref{eqnHxy=0} for the mirror curve equation). The toric orbifolds are given by the fan data as a cone at the origin in $\bR^3$ over these triangulated polytopes embedded inside $\bR^2\times\{1\} \subset \bR^3$. The orbifolds given in this way is automatically CY. See \cite{Cox:2011} for more detailed definition of toric varieties and orbifolds.

We let $\cX$ be one of these orbifolds, and $\bT\cong (\bC^*)^3$ be the dense algebraic torus inside $\cX$. The Calabi-Yau torus $\bT'\subset \bT$ preserves the Calabi-Yau forms. By the construction of the toric orbifold $\bT'= \N'\otimes_{\bZ} \bC^*$. Let $\pi'_\bR: \cX\to \bR^2$ be the moment map of is maximal compact subgroup $\bT'_\bR$. Let $\cX^1$ be the union of the $\bT'$-invariant $1$-suborbifolds of $\cX$, which are either weighted projective lines or gerbes over $\bC$. The toric graph of $\cX$ is the image $\pi'_\bR(\cX^1)$. It is a trivalent graph, and the images of gerby $\bC$ are rays while the images of weighted projective lines are segments. Each vertex is the image of a $\bT'$-fixed stacky point.

\begin{figure}[h]
  \centering{
  \resizebox{40mm}{!}{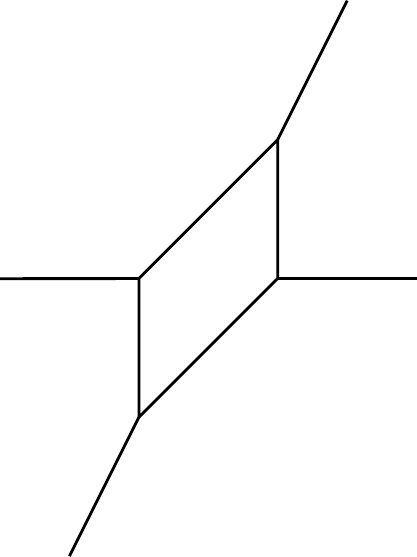}
  \caption{The toric graph of $K_{\bP^1\times \bP^1}$. The Aganagic-Vafa brane $\mathcal L$ is in the inverse image of $\pi_\bR^{\prime -1}(\mathrm{pt})$.  }
  \label{fig:toric-graph}
  }
\end{figure}

In this article, we will only consider so called {\em outer Aganagic-Vafa Lagrangian brane $\mathcal L$}. It is in the inverse image $\pi_\bR^{\prime-1}(\mathrm{pt})$, where the point $\mathrm{pt}$ is on an outer leg of the toric diagram, as shown in Figure \ref{fig:toric-graph}. With an extra condition that when presenting $\cX$ as a GIT quotient $\bC^{\fp+3}\sslash(\bC^*)^\fp$ the sum of $\fp+3$ complex coordinates on $\cL$ is a constant, $\cL$ is a Lagrangian submanifold in $\cX$ diffeomorphic to $\bR^2\times S^1$.

The \emph{framing} of an Aganagic-Vafa brane is simply a choice of $f\in \bZ$, which determines a one-dimensional subtorus $\bT'_f$ of the two-dimensional torus $\bT'$: let $\w_1=(1,0)$ $\w_2=(0,1)$ be lattice points in $M'=\mathrm{Hom}(N',\bZ)$, and thus characters of $\bT'$, then define $\bT'_f=\mathrm{ker}(\w_2-f\w_1)$. Together $(\cL,f)$ is a framed Aganagic-Vafa brane.

Given a toric CY 3-fold with a framed Aganagic-Vafa brane, there is a standard procedure to write down its mirror curves. Let $(m_1,n_1),\dots,(m_{\fp+3},n_{\fp+3})$ be integral points in the defining polytope. For our examples, all defining polytopes have a triangle inside with vertices $(1,0),(0,1)$ and $(0,0)$. By a permutation, we require $(m_i,n_i)=(1,0),(0,1),(0,0)$ for $i=1,2,3$ respectively. The affine mirror curve equation is then
\begin{equation}
\label{eqnHxy=0}
H(x,y,q)=x+y+1+\sum_{i=1}^\fp a_i(q) x^{m_{i+3}}y^{n_{i+3}}=0.
\end{equation}
This is an affine curve in $(\bC^*)^2$, denoted by $C^\circ$. It has a natural compactification into a compactified mirror curve $C$ in the toric orbifold $\bP_\Delta$. The polytope $\Delta$ defines a toric variey $\mathbb P_\Delta$ and an ample line bundle $L_\Delta$ on it. The standard construction of $\bP_\Delta$ is the Zariski closure of
\begin{equation}
\bP_\Delta=\overline{\{[x:y:1:x^{m_4}y^{m_4}:\dots:x^{m_{\fp+3}}y^{n_{\fp+3}}]\}}\subset \bP^{\fp+2}.
\label{eqn:PDelta}
\end{equation}
Here $H$ should  be regarded as a section of $L_\Delta$. Then the zero set $C$ is the compactification of $C^\circ$. We denote the Zariski open subset $\mathcal U_\cC\subseteq  (\bC^*)^\fp$ on which $C^\circ$ and $C$ are smooth. When $q\in \mathcal U_\cC$, the topological quantities of the mirror curve are recapped from the toric data, where $3+\fp$ is the number of integer points in $\Delta$, while $\fg$ is the number of interior integer points in $\Delta$:
\begin{equation}
h_1(C^\circ)=\fg+\fp+2,\ h_1(C)=2\fg\,.
\end{equation}
Meanwhile, $h^2_\CR(\cX)=\fp$ and $h^4_\CR(\cX)=\fg$, where $h^i_\CR(\cX)$ is the dimension of $H^i_\CR(\cX)$, the $i$-th Chen-Ruan cohomology of $\cX$. In particular, the genus of $C$ is $\fg$. Therefore we have a family of smooth compactified mirror curves $\cC$ over $\mathcal U_\cC$ with fiber $C$ and $\cC^\circ \subset \cC$ is the family of smooth affine mirror curves with fiber $C^\circ$.

We list our four examples in details below.
\begin{ex}
  \label{ex:1}
  $\cX=K_{ \bP^2}$ (the canonical bundle over $\bP^2$). The affine mirror curve $C^\circ$ is
  \begin{equation}
  x+y+1+q_{1} x^3/y=0.
  \end{equation}
  The compactified mirror curve $C$ sits in $ \bP_\Delta=\bP^2/\bmu_3$.
\end{ex}

\begin{ex}
  \label{ex:2}
  $\cX=K_{\bP^1\times\bP^1}$ (the canonical bundle over $\bP^1\times \bP^1$).
  The affine mirror curve $C^\circ$ is
  \begin{equation}
  x+y+1+q_1 x^2 + q_2 x^2/y=0.
  \end{equation}
  The compactified mirror curve $C$ sits in $ \bP_\Delta=(\bP^1\times \bP^1 )/\bmu_2$.
\end{ex}

\begin{ex}
  \label{ex:3}
  $\cX=K_{W\bP[1,1,2]}$ (the canonical bundle over $W\bP[1,1,2]$).
  The affine mirror curve $C^\circ$ is
  \begin{equation}
  x+y+1+q_1 x^2+q_2 x^4/y=0.
 \end{equation}
  The compactified mirror curve $C$ sits in $\bP_\Delta=W\bP[1,1,2]/\bmu_2$.
\end{ex}

\begin{ex}
  \label{ex:4}
  $\cX=K_{\mathbb F_1}$.  The affine mirror curve $C^\circ$ is
  \begin{equation}
1+x+y+q_1x y^{-1}+q_2x^2=0\,.
\end{equation}
\end{ex}

For a generic choice of parameters $q=(q_{1},q_{2},\cdots,q_\fp) \in \cU_\cC$, the affine mirror curve $C^\circ$ of $\cX$ is holomorphic Morse with respect to the covering $x:C^\circ \to \bC^*$. For our examples $\cX=K_S$ for $S=\bP^2, \bP^1\times \bP^1, W\bP[1,1,2], \mathbb F_1$, the number of ramification points is $3$ for $S=\bP^2$ and $4$ for others -- which is the same as the $\fp+\fg+1=\dim H^*_\CR(\cX)$.
We denote
by $R$ the divisor
 of ramification points of
$x:C\to \mathbb{P}^{1}$
on the mirror curve $C$ (those on $C^{\circ}$ are called finite ramification points).

\subsection{Mirror symmetry from remodeling conjecture}
\label{sec:remodeling}

Let's briefly review the definition of open GW theory Stable maps to orbifolds with Lagrangian boundary conditions and their moduli spaces have been introduced in \cite[Section 2]{CP}. Let $(\Si,x_1,\ldots, x_n)$ be a prestable bordered orbifold Riemann surface with $n$ interior marked points in the sense of \cite[Section 2]{CP}. Then the coarse moduli space $(\bar{\Si},\bar{x}_1,\ldots, \bar{x}_n)$ is a prestable bordered Riemann surface
with $n$ interior marked points, defined in \cite[Section 3.6]{Katz:2001} and \cite[Section 3.2]{liu02}.
We define the topological type $(g,h)$ of $\Si$
to be the topological type of $\bar{\Si}$ (see \cite[Section 3.2]{liu02}).

Let $(\Si,\bSi)$ be a prestable bordered orbifold Riemann surface of type $(g,h)$,
and let $\bSi = R_1\cup \cdots \cup R_h$ be union of connected component. Each connected
component is a circle which contains no orbifold points. Let $\varphi:(\Si,\bSi)\to (\cX,\cL)$ be a (bordered) stable map in the sense of \cite[Section 2]{CP}. The topological type of $\varphi$ is given by the degree $\beta'= { \varphi}_*[\Si]\in H_2(\cX,\cL;\bZ)$ and $ \mu_i=\varphi_*[R_i]\in H_1(\cL;\bZ)$. Given $\beta'\in H_2(\cX,\cL;\bZ)$ and
\begin{equation}
\vmu= (\mu_1,\dots, \mu_h) \in H_1(\cL;\bZ)^h.
\end{equation}
Let $\Mbar_{(g,h),n}(\cX,\cL\mid \beta',\vmu)$ be the moduli space of stable
maps of type $(g,h)$, degree $\beta'$, winding numbers and twisting $\vmu$,
with $n$ interior marked points.

There are evaluation maps (at interior marked points)
\begin{equation}
\ev_j:\Mbar_{(g,h),n}(\cX,\cL\mid \beta',\vmu) \to \cI\cX,\quad j=1,\ldots,n,
\end{equation}
where $\cI\cX$ is the inertia stack of $\cX$.

Let $\bT'_\bR\cong U(1)^2$ be the maximal compact subgroup of $\bT'\cong (\bC^*)^2$. Then
the $\bT'_\bR$-action on $\cX$ is holomorphic and preserves $\cL$, so it acts on
the moduli spaces $\Mbar_{(g,h),n}(\cX,\cL \mid\beta',\vmu)$.
Let $H^{\bT'_\bR}_{\CR}(\cX;\bQ)$ be the $\bT'$-equivariant Chern-Ruan cohomology of $\cX$. Given $\gamma_1,\ldots, \gamma_n \in H^*_{\bT',\orb}(\cX;\bQ)=H^*_{\bT'_\bR,\orb}(\cX;\bQ)$, we define
\begin{equation}
\langle \gamma_1,\ldots, \gamma_n\rangle^{\cX,\cL,\bT_\bR'}_{g,\beta',\vmu}:=
\int_{[F]^{\vir}} \frac{\prod_{j=1}^n (\ev_j^*\gamma_i)|_F}{e_{\bT_\bR'}(N^\vir_F)} \in \bQ(\w_1,\w_2)
\end{equation}
where $F\subset \Mbar_{(g,h), n}(\cX,\cL\mid\beta',\vmu)$ is the $\bT_\bR'$-fixed points set of
the $\bT_\bR'$-action on $\Mbar_{(g,h),n}(\cX,\cL\mid\beta',\vmu)$ and $\bQ(\w_1,\w_2)$ is the fractional field of $H^*_{\bT_\bR'}(\pt;\bQ)\cong \bQ[\w_1,\w_2]$. As shown in \cite{fang-liu-tseng}, when all $\gamma_i \in H^2(\cX)$, it turns out that the open GW invariant $\langle \gamma_1,\ldots, \gamma_n\rangle^{\cX,\cL,\bT_\bR'}_{g,\beta',\vmu}\in \bQ(\frac{\w_2}{\w_1})$, and specifying a framing $f$ amounts to setting $\frac{\w_2}{\w_1}=f$.  Then for $\gamma_1,\dots, \gamma_n \in H^2(\cX)$ the following open GW invariant
\begin{equation}
\langle \gamma_1,\ldots, \gamma_n\rangle^{\cX,(\cL,f) }_{g,\beta',\vmu}:=\iota^*_{\bT'_f\to \bT'}(\langle \gamma_1,\ldots, \gamma_n\rangle^{\cX,\cL,\bT_\bR'}_{g,\beta',\vmu})\in \bQ
\end{equation}
where $\iota^*_{\bT'_f\to \bT'}: H^*_{\bT'}(\cX)\to H^*_{\bT'_f}(\cX)$ is the induced map on the equivariant cohomology. For the rest of this paper we only consider the framing zero $f=0$ case, namely setting the equivariant parameters $\w_2=0$ and $\w_1=1$, and simply write $\langle \dots \rangle^{\cX,\cL}$ for $\langle \dots\rangle^{\cX,(\cL,0)}$.

Let $w\in H_1(\cL;\bZ)$. It also represents a class in $H_2(\cX,\cL;\bZ)$ given by the holomorphic disk $u\mapsto u^w, |u|<r$ inside the $\bT$-invariant $1$-dimensional ($\cong \bC$) represented by the outer leg (c.f. Figure \ref{fig:toric-graph}). This gives a splitting of the exact sequence
\[
0\to H_2(\cX;\bZ)\to H_2(\cX,\cL;\bZ)\to H_1(\cL;\bZ) \cong \bZ\to 0.
\]
Then for $d\in H_2(\cX;\bZ)$ and $w\in H_1(\cL;\bZ)$ we can write $d+w\in H_2(\cX,\cL;\bZ)$. Define the open-closed GW potential
\begin{eqnarray}
  \label{eqn:Fgn}
&&F_{g,n}^{\cX,\cL}({\bold  T}; X_1,\ldots, X_n) \nonumber \\
&=&\sum_{d\in \Eff(\cX)}\sum_{\mu_1,\ldots,\mu_n>0}\sum_{\ell\geq 0}
\frac{\langle \mathbf T^\ell \rangle^{\cX,\cL}_{g,d+\sum{\mu_i},(\mu_1,k_1),\ldots, (\mu_n,k_n)}}{\ell !} \cdot
\prod_{j=1}^n (X_j)^{\mu_j}\,,
\end{eqnarray}
where $\mathbf T= T_1 H_1+\dots T_\fp H_\fp$, where $\{H_a\}$ is the integral basis of $H^2_\CR(\cX)$ and they are in the extended K\"ahler cone of $\cX$.\footnote{For a complete treatment of extended K\"aher cone in the toric orbifold setting, we refer to \cite[Section 3.1]{iritani09}. It coincides with the actual K\"ahler cone when $\cX$ is a smooth manifold.   In general the extended K\"ahler cone is not necessarily a simplicial cone, but in our four examples they are.} We further require $H_1,\dots, H_{\fp'}\in H^*(\cX)\subset H^*_\CR(\cX)$ and $H_{\fp'}, \dots, H_{\fp}$ to be in the \emph{twisted sector}. We have $\fp'=\fp$ when $\cX$ is smooth. When $n=0$ this becomes the closed GW potential and we denote it by $F_{g}^\cX(\mathbf T)$. For simplicity we use $F_{g,n}$ and $F_g$ for  $F^{\cX,\cL}_{g,n}$ and $F_g^\cX$ respectively.

The remodeling conjecture of \cite{BKMP2009,BKMP2010} relates open-closed GW potential $F^{\cX,\cL}_{g,n}$ of $(\cX,\cL)$ to Eynard-Orantin's topological recursion invariants $\omega_{g,n}$. The full version of this conjecture, including the orbifold cases, is proved in \cite{FLZ16}.

Let $K_1(C^\circ;\bC)=\mathrm{ker}(H_1(C^\circ;\bC)\to H_1((\bC^*)^2,\bC))\cong \bC^{\fg+\fp}$ where the map $H_1(C^\circ;\bC)\to H_1((\bC^*)^2,\bC)$ is induced from
 \begin{equation}
 C^\circ\stackrel{(x,y)}{\longrightarrow}(\bC^*)^2.
 \end{equation}

The enumerative mirror symmetry is corrected by the mirror map. We refer the reader to \cite[Section 4.1 and 4.2]{fang-liu-tseng} for the explicit form of the mirror map. We only list its asymptotic behavior here:
\begin{align}
    \label{eqn:mirror-map}
  \nonumber
  T_a&=\log q_a + o(q), a=1,\dots,\fp'\,,\\
  T_a&=q_a+ o(q), a=\fp'+1,\dots,\fp\,,\\
  X_i&=x_i(1+{O}(q))\, .\nonumber
\end{align}
Notice that since we have fixed $q_a$ as in Examples \ref{ex:1}, \ref{ex:2}, \ref{ex:3} and \ref{ex:4}, this particular asymptotic behavior determines each $H_a$.

By the explicit construction in \cite[Section 5.5]{FLZ16}, the closed mirror maps are given by period integrals. Over a neighborhood of $q=0$ in $\cU_\cC$ (although $0$ is taken away from $\cU_\cC$ since the mirror curve is not smooth there), there are (local families of) cycles $\tilde A_1,\dots, \tilde A_\fp\in K_1(C^\circ;\bC)$  such that
\begin{equation}
T_a=\int_{\tilde A_a}\log y\frac{dx}{x}\,,\quad  a=1,\dots,\fp.
\end{equation}
They are called \emph{closed mirror maps}. The integrals are well-defined up to constants since the cycles are in $K_1$.

We define the bifundamental, a.k.a. Bergmann kernel, $\omega_{0,2}$ as follows.
\begin{itemize}
  \item $\omega_{0,2}$ is a symmetric meromorphic form on $C^2$, with the only pole at the diagonal, i.e. for any local coordinate $z$
  \begin{equation}
  \omega_{0,2}(z_{1},z_{2})=\frac{dz_1 \boxtimes dz_2}{(z_1-z_2)^2}+\text{holomorphic part}.
  \end{equation}
  \item We require
  \begin{equation}
  \int_{z\in \bar A_a}\omega_{0,2}(z,w)=0\,, \quad  a=1,\dots,\fp\,.
  \end{equation}
\end{itemize}
That is, $\omega_{0,2}$ vanishes on the $A$-cycles of a Torelli's marking. Here each $\bar A_a$ is the image of $\tilde A_a$ when passing to $H_1(C;\bC)$. Notice that $\{\bar A_a\}_{a=1}^\fp$ span a Lagrangian subspace in $H_1(C;\bC)$.

The fundamental bidifferential $\omega_{0,2}$ constructed in the previous subsection depends on the choice of a Lagrangian subspace of $H_1(C;\bC)$ on curves over a neighborhood of $0$ in $\cU_\cC$, \emph{which cannot be extended over the entire $\cU_\cC$}. Let $A_1,\dots,A_\fg,B_1,\dots,B_\fg\in H_1(C;\bZ)$ be a Torelli's marking, i.e. $A_i\cap A_j=B_i\cap B_j=0$ and $A_i\cap B_j=\delta_{ij}$. Define the coordinates $\tau_{ij}$.
\begin{equation}
  \label{rem:schiffer}
\int_{A_j}\omega_i=\delta_{ij}\,,\quad \int_{B_j}\omega_i=\tau_{ij},
\end{equation}
where $\{\omega_i\}$ form a basis in $\Omega^1(C)$.
One can then define the modified cycles following \cite{Eynard:2007invariants}
\begin{equation}
  \label{eqn:modified-cycles}
\underline A_i=A_i-\sum_j {\kappa_{ij}} (B_j-\sum_{l=1}^\fg \tau_{jl}A_l)\,,\quad \underline B_i=B_i-\sum_{j=1}^\fg A_j,
\end{equation}
where $\kappa=-1/{(\tau-\bar\tau)}$ which is a $\fg\times \fg$ matrix. Notice that in all our four examples $\fg=1$. They also form a Lagrangian subspace in $H_1(C;\bC)$. We use the cycles $\underline A_1,\dots,\underline A_\fg$ to define the bidifferential $\widehat{ \omega}_{0,2}$, this is
what is called the \emph{Schiffer kernel}, see \cite{Tyurin:1978periods}.
It turns out that $\widehat{\omega}_{0,2}$ does not depend on the choice of $A_1,\dots,A_\fg$, and is \emph{defined for curves over the entire $\cU_\cC$.}
From the definition, by regarding $\mathrm{Im}\tau_{ij}$ as formal variables, then the Schiffer kernel has the "holomorphic limit"
\begin{equation}
\lim_{\mathrm{Im}\tau_{ij}\to \infty}\widehat{ \omega}_{0,2}=\omega_{0,2}\,.
\end{equation}
See \cite{Fay:1977fourier, Tyurin:1978periods, Takhtajan:2001free} for details.

Define $R^{\circ}$ to be the ramification locus of $x:C^\circ\to \mathbb{P}^{1}$. We assume $R^{\circ}$ has only simple ramifications which is the case of our examples for generic $q\in \cU_\cC$ . The Eynard-Orantin topological recursion defines $\omega_{g,n}$ $(2g-2+n>0)$ recursively as follows
\begin{align}
  \label{eqn:EO-recursion}
  \omega_{g,n}(p_1,\ldots, p_n) &= \sum_{p_0\in R^{\circ}}\mathrm{Res}_{p \to p_0}
  \frac{\int_{\xi = p}^{\bar{p}} B(p_n,\xi)}{2(\lambda(p)-\lambda({p^*}))}
  \Big( \omega_{g-1,n+1}(p,{p^*},p_1,\ldots, p_{n-1}) \\
  \nonumber
  &\quad\quad  + \sum_{g_1+g_2=g}
  \sum_{ \substack{ J\cup K=\{1,..., n-1\} \\ J\cap K =\emptyset } } \omega_{g_1,|J|+1} (p,p_J)\omega_{g_2,|K|+1}({p^*},p_K)\Big),
\end{align}
where $\lambda =\log y\frac{d  x}{ x}$, $\omega_{0,1}=0$, and for any $p\in C^\circ$ around a simple ramification point $p_0$, $p^* \neq p$ is the unique point that has the same $x$-coordinate.
Eynard-Orantin has shown that $\omega_{g,n}$ is symmetric, and at most has poles at ramification points.\\

The mirror  $C$ has a distinguished point
\begin{equation}\label{eqnopenGWpoint}
\mathfrak s_0=(x,y)=(0,-1)\,.
\end{equation}
We call this the open large radius limit point or open GW point.

\begin{thm}[Open-sector remodeling conjecture and disk mirror theorem \cite{fang-liu-tseng, FLZ16} restricted to our cases, in framing zero]
  \label{thm:remodeling-open}
  Under the open-closed mirror map \eqref{eqn:mirror-map}, for $\cX=K_S$ where $S=\bP^2, \bP^1\times \bP^1, W\bP[1,1,2], \bF_1$,  we have the following statements.
  \begin{itemize}
    \item When $2g-2+n>0$,
      \begin{equation}
      d_{X_1}\dots d_{X_n} F_{g,n}= (-1)^{g-1+n}\omega_{g,n}\,.
      \end{equation}
    \item When $(g,n)=(0,2)$,
    \begin{equation}
    d_{X_1}d_{X_2}F_{0,2}=\omega_{0,2}-\frac{d
   x_1  \boxtimes d x_2}{( x_1-x_2)^2}\,.
    \end{equation}
    \item When $(g,n)=(0,1)$,
    \begin{equation}
    ( x\frac{\partial}{\partial x})^2 F_{0,1}={ x} \frac{\partial }{\partial x}\log y\,.
    \end{equation}
    We understand $\omega_{g,n}$ or $\log y$ as expansions by power series in $ x_1,\dots,  x_n$ (or $ x$) at the open large radius limit point
    $\mathfrak s_0=(0,-1)$. Notice that $\omega_{0,2}$ is singular at the diagonal so we need to subtract its singular part first.
  \end{itemize}
\end{thm}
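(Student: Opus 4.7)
The proof is really a specialization of results already in the literature; my plan is to identify the correct statements, check that the four examples fit into their hypotheses, and verify that the expansion point used in the generating series on the A-side matches the distinguished point $\mathfrak s_0$ on the mirror curve.

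First, I would invoke the main theorem of \cite{FLZ16}, which establishes the full Bouchard--Klemm--Mari\~no--Pasquetti remodeling conjecture for an arbitrary semi-projective toric Calabi--Yau 3-orbifold equipped with an outer Aganagic--Vafa brane and an integer framing $f$. That theorem states precisely an identity of the form $d_{X_1}\cdots d_{X_n}F_{g,n}=(-1)^{g-1+n}\omega_{g,n}$ for $2g-2+n>0$ (after the open-closed mirror map), together with the degenerate $(0,2)$ correction $-d x_1\boxtimes d x_2/(x_1-x_2)^2$ coming from the difference between the Bergmann kernel and its naive $x$-coordinate counterpart. Each of $\cX=K_{\bP^2},K_{\bP^1\times\bP^1},K_{W\bP[1,1,2]},K_{\mathbb F_1}$ is a smooth semi-projective toric Calabi--Yau 3-fold (so no twisted sector obstructions), and the brane $\cL$ described in Section \ref{sec:review} is an outer Aganagic--Vafa brane in the sense of \emph{loc.~cit.}; setting framing $f=0$ is exactly the specialization $\w_2=0,\w_1=1$ used in the definition of $F_{g,n}$ above. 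Thus parts (1) and (2) follow at once.

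For part (3), I would appeal to the disk mirror theorem of \cite{fang-liu-tseng}, which identifies the Aganagic--Vafa disk potential of $(\cX,\cL,f)$ with the Abel--Jacobi-type integral of the superpotential $\lambda=\log y\, d x/x$ on the mirror curve. Writing this identification out and differentiating twice in the logarithmic variable $x\partial/\partial x$ produces exactly $(x\partial_x)^2F_{0,1}=x\partial_x\log y$ in framing zero. Again the only input one must check is that the outer brane and framing match the hypotheses of that disk theorem, which is immediate from the construction in Section \ref{sec:review}.

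The remaining point, where some actual checking is needed, is to make sure the expansion point used to turn $\omega_{g,n}$ and $\log y$ into power series in $x_1,\dots,x_n$ is the correct one, namely $\mathfrak s_0=(0,-1)$ defined in \eqref{eqnopenGWpoint}. Here I would trace through the normalization adopted in \cite{FLZ16}: the disk amplitude must vanish at the brane location, which on the mirror side is characterized by $x\to 0,\ y\to -1$ (the branch of $y$ singled out by $x+y+1+O(x^2)=0$ from the mirror curve equation \eqref{eqnHxy=0}). Since $\omega_{0,2}$ has a genuine pole on the diagonal, the expansion of $\omega_{0,2}-dx_1\boxtimes dx_2/(x_1-x_2)^2$ is the thing that has a convergent Taylor series at $(\mathfrak s_0,\mathfrak s_0)$, matching the $(0,2)$ statement. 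The main obstacle, if any, is bookkeeping the sign $(-1)^{g-1+n}$ and the framing-zero choice of coordinate: these conventions differ between \cite{fang-liu-tseng} and \cite{FLZ16} by explicit signs and by the identification $X_j=x_j(1+O(q))$ of \eqref{eqn:mirror-map}, so I would write out a short dictionary verifying that our normalization of $F_{g,n}$ reproduces exactly the sign appearing in the cited theorems.
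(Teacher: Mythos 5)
Your overall strategy coincides with the paper's: Theorem \ref{thm:remodeling-open} is not proved here but is quoted from \cite{fang-liu-tseng, FLZ16}, so the only real work is verifying that the four examples satisfy the hypotheses of those theorems. However, the hypothesis-checking you outline has two defects, one cosmetic and one substantive.

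First, your justification ``each of the four targets is a smooth semi-projective toric Calabi--Yau 3-fold (so no twisted sector obstructions)'' is false for $\cX=K_{W\bP[1,1,2]}$: the base $W\bP[1,1,2]$ has a $\bZ_2$ orbifold point, $\cX$ is a genuine toric CY 3-orbifold with a nontrivial twisted sector (this is exactly why the paper distinguishes $\fp'$ from $\fp$ in the definition of $\mathbf T$ below \eqref{eqn:Fgn}). This does not sink the argument, because \cite{FLZ16} proves the remodeling conjecture for orbifolds, but the reason you give is not the right one and you should instead invoke the orbifold version directly.

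Second, and more importantly, you never verify the one hypothesis that is actually specific to ``restricted to our cases, in framing zero.'' The theorem of \cite{FLZ16} is established for \emph{generic} framing, and the essential requirement (spelled out in Remark \ref{rem:other-cases}) is that the number of ramification points of the covering map $x y^{-f}$ equals $\dim H^*_{\CR}(\cX)=\fp+\fg+1$. Framing zero is \emph{not} automatically generic: for the twelve other local toric CY 3-folds with hyperelliptic mirror curves this count fails, which is precisely why the paper restricts to these four geometries. So the step you must supply is the explicit count that $x:C^\circ\to\bC^*$ has $3$ simple ramification points for $K_{\bP^2}$ and $4$ for the other three cases, matching $\fp+\fg+1$ in each case. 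Without this check, citing \cite{FLZ16} at framing zero is not justified. Your discussion of the expansion point $\mathfrak s_0=(0,-1)$ and of the $(0,2)$ diagonal subtraction is fine and consistent with the paper's conventions.
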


\begin{thm}[Closed-sector remodeling conjecture for $g>1$, \cite{FLZ16}, restricted to our cases]
  \label{thm:remodeling-closed}
 Under the same assumption as the previous theorem,
  \begin{equation}
  \label{eqn:Fg}
  F_g=\frac{1}{2g-2}\sum_{p_0\in R^{\circ}}\mathrm{Res}_{p\to p_0}\left( d^{-1} \lambda(p)   \cdot  \omega_{g,1}(p)\right)\,,
  \end{equation}
  where $d^{-1}\lambda=\int\lambda$, which can be locally defined near each ramification point, and the constant ambiguity does not affect the result.
\end{thm}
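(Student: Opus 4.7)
Plan: The right-hand side is, by construction, the Eynard--Orantin genus-$g$ free energy $F_g^{EO}$ attached to the spectral curve $(C,x,y,\omega_{0,2})$ for $g\geq 2$. Thus the theorem asserts the identification $F_g^{\cX}=F_g^{EO}$, i.e.\ the closed-sector remodeling conjecture of \cite{BKMP2009}. This was proved in full generality for toric Calabi--Yau $3$-orbifolds in \cite{FLZ16}, so the proof is by citation; below I sketch the strategy of \emph{loc.\ cit.} adapted to our four examples.

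The argument is a two-sided graph-sum comparison under the open/closed mirror map \eqref{eqn:mirror-map}. On the A-side, Atiyah--Bott virtual localization on $\Mbar_{g,0}(\cX,d)$ expresses $F_g^{\cX}(\mathbf T)$ as a sum over decorated subgraphs of the toric graph of $\cX$: each vertex contribution is an equivariant topological vertex, i.e.\ a generating function of Hodge integrals on $\Mbar_{g,n}$ twisted by the $\mu_m$-action of the corresponding local orbifold chart $[\bC^3/G_v]$, and each edge contribution is a combinatorial propagator along a compact leg. Resumming in the K\"ahler parameters $Q$ then yields a graph-sum expression for $F_g^{\cX}$.

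On the B-side, Eynard's graph-sum theorem for topological recursion represents $F_g^{EO}$ as a sum over the same type of trivalent stable graphs, with vertex weights now determined by the local Airy-type expansion of $\lambda=\log y\,dx/x$ at each ramification point $p_0\in R^\circ$, and edge weights given by contractions of the Bergman kernel $\omega_{0,2}$. Matching the two sums term by term gives \eqref{eqn:Fg}. Edge matching is a direct comparison of $\omega_{0,2}$ with the equivariant propagator; vertex matching is the core identity, expressing the equivariant GW vertex as a Laplace transform of the Kontsevich--Airy generating function evaluated in the local coordinate on $C$ at the relevant ramification point.

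The main obstacle is precisely this vertex matching: one must reconcile a generating series of $\mu_m$-twisted Hodge integrals on $\Mbar_{g,n}$ with the Kontsevich-type intersection numbers that extract the Taylor coefficients of $\omega_{g,1}$ at $p_0\in R^\circ$. This is achieved by comparing the large-radius asymptotics of Givental's equivariant $I$-function with the ramification data of $(C,x,y)$, an identity that originates in work of Zhou. Once both the vertex identity and the edge identification are in place, summation over all trivalent graphs of topological type $(g,0)$, combined with the Eynard--Orantin residue formula defining $F_g^{EO}$, yields the closed-sector statement \eqref{eqn:Fg}.
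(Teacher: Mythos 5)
Your proposal is correct and matches the paper's treatment: Theorem \ref{thm:remodeling-closed} is stated as a citation of the closed-sector remodeling conjecture proved in \cite{FLZ16} (restricted to the four examples at framing zero), and the paper offers no independent argument. Your sketch of the graph-sum comparison underlying \emph{loc.\ cit.}\ (A-side localization versus Eynard's graph-sum formula, with the vertex identity as the core step) is a fair summary of that cited proof, so nothing further is needed.
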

\begin{rem}
\label{rem:other-cases}
The remodeling conjecture is proved for any toric CY $3$-orbifold under \emph{generic framing} in \cite{FLZ16}. The essential requirement is that the number of ramification points of $xy^{-f}$ for an integer framing $f$
is the same as $\dim H^*_\CR(\cX)$. In previous sections, we state remodeling conjecture in \emph{framing zero}, where the number of ramification points of $x$ is the same as $\dim H^*(\cX)$ for $K_S$ being our four cases. In other cases, the affine mirror curves, if in hyperelliptic forms, have $3$ or $4$ $x$-ramification points and which is \emph{less} than the dimension of $H^*_\CR(\cX)$.
\end{rem}

Once we replace the Bergman kernel $\omega_{0,2}$ by the Schiffer kernel $\widehat \omega_{0,2}=S$, we denote the recursion result by $\widehat\omega_{g,n}$. Furthermore we use $\widehat F_g$ to denote the right hand side of \eqref{eqn:Fg} after replacing $\omega_{0,2}$ by $\widehat \omega_{0,2}$.  We have $\lim_{\mathrm{Im}\tau\to\infty} \widehat\omega_{g,n}=\omega_{g,n}$ and $\lim_{\mathrm{Im}\tau\to\infty} \widehat F_g=F_g$.

\section{Geometry of genus one mirror curves}
\label{sec:arithmetic}

Hereafter by a curve $C$ we mean a smooth projective variety over $\mathbb{C}$ of pure dimension one. Our technique requires the affine mirror curve
\eqref{eqnHxy=0} to be in hyperelliptic form. That is, the equation of the curve \eqref{eqnHxy=0} can be transformed by a bi-regular morphism into the form
\begin{equation}\label{eqnhyperellipticformofmirrorcurve}
\tilde{y}^2=g(x)
\end{equation}
after the simple change of variables
\begin{equation}\label{eqnsimplechangeonybyh}
\tilde{y}=y+h(x)
\end{equation}
where $h(x)$ is a quadratic polynomial in $x$. In particular one writes
\begin{equation}\label{eqnshyperellipticinvolutionalgebraic}
y^*=-y-2h(x)\,,
\end{equation}
then the action $*:(x,y) \mapsto (x, y^*)$ gives the hyperelliptic involution.

The remodeling conjecture relates $\omega_{g,n}$ to $F^{\cX,\cL}_{g,n}$. The main tool of our investigation is the hyperelliptic form of the mirror curve $\tilde{y}^2=g(x)$, for which the ramification points of $x$ have very nice properties.

\subsection{Basic definitions on modular forms and Jacobi forms}
\label{sec:jacobi}

In topological recursion,
we need to represent various ingredients in terms of
modular forms and Jacobi forms.\\

We now give very quick definitions, without explaining many of the subtitles. See e.g., \cite{Zagier:2008} for a quick introduction to modular forms.
Readers who are familiar with these concepts can skip this subsection.

\begin{dfn}[Modular forms]\label{dfnmodularforms}
	A holomorphic function $\phi:  \mathcal{H}\rightarrow \mathbb{C}$, where $\mathcal{H}$ is the upper-half plane, is a called a (holomorphic) modular form of weight $k\in \mathbb{Z}$
	for the modular
	group $\Gamma<SL_{2}(\mathbb{Z})$ if it satisfies
	\begin{enumerate}
		\item
		$\phi({a\tau+b\over c\tau+d})
		=(c\tau+d)^{k} \phi(\tau)\,,
		\quad
		\forall \gamma=
		\begin{pmatrix}
		a & b\\
		c& d
		\end{pmatrix}\in \Gamma\
		$\,,
		\item
		$\phi$ has sub-exponential
		growth at infinity $\tau\rightarrow i \infty$.
	\end{enumerate}
\end{dfn}

Throughout this work we always assume that $\Gamma$ contains $-\mathbb{1}$. We shall encounter the Hecke subgroups $\Gamma_{0}(n)$, the subgroups $\Gamma_{1}(n)$, and the principal congruence subgroups $\Gamma(n)$ and their intersections, homogenized by adjoining $-\mathbb{1}$ if needed.
See the standard textbooks \cite{Rankin:1977ab, Schoeneberg:2012elliptic} for the precise definitions of these groups.

The factor $(c\tau+d)^{k}$ is called the automorphy factor.
In this paper, we shall need to work with modular forms with non-trivial multiplier systems.
The definition is as follows.
Fix a modular group $\Gamma<SL_{2}(\mathbb{Z})$.
A function $v: \Gamma\rightarrow \mathrm{U}(1)$ is called a multiplier system of weight $k$ for $\Gamma$ if it satisfies $v(-\mathbb{1})=(-1)^{k}$ and
\begin{equation}
v(\gamma_{1}\gamma_{2}) = w(\gamma_{1},\gamma_{2})v(\gamma_{1})v(\gamma_{2})\,,
\quad
\forall \gamma_{1},\gamma_{2}\in \Gamma
\end{equation}
for some function $w$ valued in $\{\pm  1\}$ making $v(\gamma)(c\tau+d)^{k}$ into
an automorphy factor. Replacing the automorphy factor $(c\tau+d)^{k}$ in Definition
\ref{dfnmodularforms} by $v (\gamma) (c\tau+d)^{k}$
one defines modular forms of weight $k$ with respect to the multiplier system $v$.

We say $v$ is a trivial multiplier system for $\Gamma$
if there exists a subgroup $\bar{\Gamma}<SL_{2}(\mathbb{Z})$
such that $\Gamma$ is generated by $\bar{\Gamma}$ and $-\mathbb{1}$
and that $v(\gamma)=1$ for any $\gamma\in \bar{\Gamma}$.
The simplest nontrivial multiplier system
is $v(\gamma)=\chi(d)$, where $\chi$ the extension to $\mathbb{Z}$
of some Dirichlet character $\chi: (\mathbb{Z}/N\mathbb{Z})^{*}\rightarrow \mathbb{C}^{*}$
satisfying $\chi(-1)=(-1)^{k}$. We then call the corresponding nontrivial multiplier system
$v$ a Dirichlet multiplier sytem, otherwise we call it non-Dirichlet.
See \cite{Rankin:1977ab, Schoeneberg:2012elliptic} for further details.\\

One can also define the variants quasi-modular forms and almost-holomorphic modular forms \cite{Kaneko:1995}.
\begin{dfn}[Quasi-modular forms]\label{dfnquasimodularforms}
	A holomorphic function $\phi:  \mathcal{H}\rightarrow \mathbb{C}$, where $\mathcal{H}$ is the upper-half plane, is called a quasi-modular form of weight $k\in \mathbb{Z}$
	for the modular
	group $\Gamma<SL_{2}(\mathbb{Z})$ if it satisfies
	\begin{enumerate}
		\item
		There exist holomorphic functions  $f_{j}:  \mathcal{H}\rightarrow \mathbb{C},j=1,2,\cdots k$,
		such that
		\begin{equation*}
		\phi({a\tau+b\over c\tau+d})
		=(c\tau+d)^{k} \phi(\tau)+\sum_{j=1}^{k} c^{j}(c\tau+d)^{k-j}f_{j}\,,
		\quad
		\forall
		\begin{pmatrix}
		a & b\\
		c& d
		\end{pmatrix}\in \Gamma\,,
		\end{equation*}
		\item
		$\phi$ has subexponential
		growth at infinity $\tau\rightarrow i \infty$.
	\end{enumerate}
\end{dfn}
\begin{dfn}[Almost-holomorphic modular forms]\label{dfnalmostholomorphicmodularforms}
	A real analytic function $\phi:  \mathcal{H}\rightarrow \mathbb{C}$, where $\mathcal{H}$ is the upper-half plane, is a called an  almost-holomorphic modular form of weight $k\in \mathbb{Z}$
	for the modular
	group $\Gamma<SL_{2}(\mathbb{Z})$ if it satisfies
	\begin{enumerate}
		\item
		$\phi({a\tau+b\over c\tau+d},\overline{({a\tau+b\over c\tau+d})} )
		=(c\tau+d)^{k} \phi(\tau,\bar{\tau})\,,
		\quad
		\forall
		\begin{pmatrix}
		a & b\\
		c& d
		\end{pmatrix}\in \Gamma\
		$\,,
		\item
		$\phi$ has polynomial growth in $1/ \mathrm{Im} \tau$
		as $\tau\rightarrow i \infty$.
	\end{enumerate}
\end{dfn}
A typical example of a quasi-modular form is the Eisenstein series $E_{2}$ of weight $2$
and of an almost-holomorphic modular form is
\begin{equation}
\widehat{E_{2}}=E_{2}+{-3\over \pi} {1\over \mathrm{Im}\tau}\,.
\end{equation}
In fact, we have the following structure theorem due to \cite{Kaneko:1995}.
The set of
modular forms, quasi-modular forms, almost-holomorphic modular forms
for the modular group $\Gamma$ form graded rings.
Denote them by $M(\Gamma), \widetilde{M}(\Gamma), \widehat{M}(\Gamma)$, respectively.
Then
\begin{equation}\label{eqnstructuretheoremofquasialmost}
\widetilde{M}(\Gamma)=M(\Gamma)[E_{2}]\,,
\quad
\widehat{M}(\Gamma)=M(\Gamma)[\widehat{E_{2}}]\,.
\end{equation}
For the full modular group $\Gamma(1)=SL_{2}(\mathbb{Z})$ one has $M(\Gamma(1))=\mathbb{C}[E_{4},E_{6}]$, where $E_{4},E_{6}$
are the Eisenstein series of weight $4,6$ respectively.\\

The above definitions generalize to the corresponding objects with multiplier systems.
For the examples later studied in this paper, all of the modular forms with non-trivial multiplier systems
arise from uniformization of some elliptic curve families (see Section \ref{secallfourexamples}) and
are usually explicit functions\footnote{In this paper we follow the convention in \cite{Zagier:2008} for the $\theta$-constants.} of $\theta$-constants or $\eta$-functions whose multiplier systems
are very explicit.
By passing to a smaller modular subgroup if necessary, we can assume that
their multiplier systems are trivial.
For this reason, we will usually be able to ignore the subtlety on multiplier systems in this work.
This will be demonstrated in all of our examples in Section \ref{secallfourexamples}.\\

We shall also encounter the notion of Jacobi forms \cite{Eichler:1984} whose definition is given as follows.
\begin{dfn}[Jacobi forms]\label{dfnJacobiforms}
	A holomorphic function $\Phi:  \mathbb{C}\times \mathcal{H}\rightarrow \mathbb{C}$ is a (holomorphic) Jacobi form of weight $k\in \mathbb{Z}$
	, index $\ell\in \mathbb{Z}_{>0}$ for the modular
	group $\Gamma<SL_{2}(\mathbb{Z})$ if it is "modular in $\tau$ and elliptic
	in $z$” in the sense that
	\begin{enumerate}
		\item
		$\Phi({z\over c\tau+d}, {a\tau+b\over c\tau+d})
		=(c\tau+d)^{k} e^{2\pi i  \ell  {cz^{2}\over c\tau+d}} \Phi(z,\tau)\,,
		\quad
		\forall
		\begin{pmatrix}
		a & b\\
		c& d
		\end{pmatrix}\in \Gamma\
		$\,,
		\item
		$\Phi(z + m\tau+n, \tau)
		=e^{-2\pi i \ell (m^2 \tau+2m z)}
		\Phi(z,\tau)\,, 	\quad\forall m,n\in \mathbb{Z}
		$\,.
		\item together with some regularity condition: in the Fourier expansion
		\begin{equation*}
		\Phi(z,\tau)=\sum_{n,r} c(n,r)e^{2\pi i n \tau }e^{2\pi i r z }\,,
		\end{equation*}
		one has
		$c(n,r)=0$ unless $4\ell n \geq r^2$.
	\end{enumerate}
\end{dfn}
See \cite{Eichler:1984} and the more recent work \cite{Dabholkar:2012} for details on variants of Jacobi forms
and the structure theorems of the rings they form.

For the purpose of this work, essentially we shall only use the weak Jacobi forms for the full modular group $SL_{2}(\mathbb{Z})$.
Weak Jacobi forms are defined by replacing the regularity condition in Definition \ref{dfnJacobiforms} by: $c(n,r)=0$ unless $n\geq 0$.
For $\Gamma(1)=SL_{2}(\mathbb{Z})$, the
set of all weak Jacobi forms $J$, bigraded by $(k,\ell)$, forms a $M(SL_{2}(\mathbb{Z}))$-module
\begin{equation}\label{eqnringofweakJacobiforms}
J=\mathbb{C}[E_{4},E_{6}][A,B,C]/ \langle   432C^2-AB^3+3E_{4}A^3 B-2E_{6}A^{4} \rangle \,,
\end{equation}
where $E_{4},E_{6}$ are the usual Eisenstein series, and $A,B,C$
are weak Jacobi forms of weight and index
$(k,\ell)=(-2,1), (0,1), (-1,2)$ respectively.
See for example \cite{Dabholkar:2012} for details. \\

In this work,
we shall need to work with "meromorphic modular forms", "meromorphic Jacobi forms"
which are defined with the requirement of holomorphicity replaced by meromorphicity.
To be more precise, fix a modular group $\Gamma$, we
denote the ring of meromorphic modular forms by $\mathcal{M}(\Gamma)$.
This is the fractional field (respecting the grading) of ring $M(\Gamma)$ of
(holomorphic) modular forms.
We also denote the factional field  (respecting the bigrading) of the ring $J$ of weak Jacobi forms for the full modular group $SL_{2}(\mathbb{Z})$ by $\mathcal{J}$.
It includes in particular the Weierstrass elliptic functions $\wp$ and $\wp'$
which are proportional to $B/A, C/A^2$ respectively according to e.g.,  \cite{Dabholkar:2012}.\\

We now introduce the following definitions, borrowing the terminologies "quasi" and "almost" from the
$\tau$-part of the corresponding functions.

\begin{dfn}[Quasi-meromorphic Jacobi forms and almost-meromorphic Jacobi forms]
	\label{dfnquasialmostmeromorphicJacobiforms}
	Fix a modular group $\Gamma<SL_{2}(\mathbb{Z})$.
	We define the ring of meromorphic quasi-modular forms, and almost-meromorphic modular forms to be
	\begin{equation}\label{eqnfractionalfieldadjoinedE2}
	\widetilde{\mathcal{M}}(\Gamma)=\mathcal{M}(\Gamma)[E_{2}]\,,
	\quad
	\widehat{\mathcal{M}}(\Gamma)=\mathcal{M}(\Gamma)[\widehat{E_{2}}]\,,
	\end{equation}
	where $\mathcal{M}(\Gamma)$ is the fractional field of the ring $M(\Gamma)$ of (holomorphic) modular forms for $\Gamma$.

	We define the ring of
	quasi-weak Jacobi forms and almost-weak Jacobi forms for the full modular group $SL_{2}(\mathbb{Z})$ to be
	\begin{equation}
	\widetilde{J}=J[E_{2}]\,,
	\quad
	\widehat{J}=J[\widehat{E_{2}}]\,,
	\end{equation}
	where $J$ is the rings of weak Jacobi forms for $SL_{2}(\mathbb{Z})$ in \eqref{eqnringofweakJacobiforms}.
	We define the ring of
	quasi-meromorphic Jacobi forms and almost-meromorphic Jacobi forms for the full modular group $SL_{2}(\mathbb{Z})$ to be
		\begin{equation}
	\widetilde{\mathcal{J}}=\mathcal{J}[E_{2}]\,,
	\quad
	\widehat{\mathcal{J}}=\mathcal{J}[\widehat{E_{2}}]\,,
	\end{equation}
	where $\mathcal{J}$ is the fractional field of the ring $J$ of weak Jacobi forms for $SL_{2}(\mathbb{Z})$.\\

In the rest of the paper we shall only work with meromorphic, quasi-meromorphic and almost-meromorphic Jacobi forms as opposed to the corresponding "weak" counterparts.\\

	For the modular group $\Gamma<SL_{2}(\mathbb{Z})$, we define\footnote{These definitions could be made more general by replacing $\mathcal{J}$ by the ring of meromorphic Jacobi forms for the modular subgroup $\Gamma$, but the former are already good enough for the purpose of this work. } the (multi-)graded rings of
	quasi-meromorphic Jacobi forms and almost-meromorphic Jacobi forms for the modular group $\Gamma<SL_{2}(\mathbb{Z})$ to be
	\begin{equation}\label{eqndfnquasiandalmostholomorphicJacobiforms}
	\widetilde{\mathcal{J}}(\Gamma)=\mathcal{J} \otimes \widetilde{\mathcal{M}}(\Gamma)
	=\widetilde{\mathcal{J}} \otimes \mathcal{M}(\Gamma)\,,
	\quad
	\widehat{\mathcal{J}}(\Gamma)=\mathcal{J}  \otimes \widehat{\mathcal{M}}(\Gamma) =\widehat{\mathcal{J} } \otimes \mathcal{M}(\Gamma)\,.
	\end{equation}
\end{dfn}

Following \cite{Kaneko:1995}, one can define the so-called "constant term map".
It is defined by regarding $\mathrm{Im}\tau$ as a formal variable and then sending it to infinity, hence the same as the "holomorphic limit".
It has the effect of
replacing $\widehat{E_2}$ by $E_{2}$ and of mapping the "almost" objects to "quasi" objects.
We also denote this operation by  $\lim_{\mathrm{Im}\tau\rightarrow \infty}$.\\

The final definition that we need is the notion of multi-variable meromorphic Jacobi form and
its variants.
Similar to the single-variable case we have the following definition.
\begin{dfn}\label{dfnmultiJacobiform}
A meromorphic function $\Phi: (\mathbb{C}  \times  \mathcal{H} )^{n}\rightarrow \mathbb{C}$
is a multi-variable meromorphic Jacobi form
of weight $k=(k_1,k_2,\cdots, k_n)\in \mathbb{Z}^n$, index $\ell=(\ell_1,\ell_2,\cdots, \ell_n)\in
\mathbb{Z}_{>0}^n$
for the modular
group $\prod_{j=1}^{n}\Gamma_{j}<(\mathrm{SL}_2(\Z))^n$,
if
item 1 in Definition \ref{dfnJacobiforms} is replaced by
\begin{eqnarray*}
&&\Phi\left({z_1\over c_1\tau_1+d_1}, {z_2\over c_2\tau_2+d_2}, \cdots, {z_n\over c_n\tau_n+d_n}, {a_1\tau_1+b_1\over c_1\tau_1+d_1},
{a_2\tau_2+b_2\over c_2\tau_2+d_2},
\cdots, {a_n\tau_n+b_n\over c_n\tau_n+d_n}
\right)\\
		&=&
		\prod_{j=1}^{n}(c_j\tau_j+d_j)^{k_j} e^{2\pi i  \ell_j  {c_j z_j^{2}\over c_j\tau_j+d_j}} \Phi(z_1,z_2,\cdots, z_n,\tau_1,\tau_2,\cdots, \tau_n)\,,
		\forall
		\begin{pmatrix}
		a_j & b_j\\
		c_j & d_j
		\end{pmatrix}\in \Gamma_j,j=1,2,\cdots n\,,
\end{eqnarray*}
and item 2 and 3 in Definition \ref{dfnJacobiforms} are replaced in the obvious way.
The total weight of $\Phi$ is defined to be the integer $\sum_{j=1}^{n}k_{j}$.
\end{dfn}

In this work, we restrict to the subclass of multi-variable meromorphic Jacobi forms consisting of polynomials of single-variable  meromorphic Jacobi forms in $(z_i,\tau_i)$, with $\tau_{1}=\tau_{2}=\cdots =\tau_{n}$ and
the modular group being $\prod_{j=1}^{n}\Gamma_{j}=\Gamma^{n}$ for some $\Gamma<\mathrm{SL}_2(\mathbb{Z})$.
This subclass again forms a ring which we still denote by $J$ by abuse of notation.
For this subclass, the notions of multi-variable quasi-meromorphic Jacobi forms and multi-variable almost-meromorphic Jacobi forms that we shall need are then defined in a way similar to the single-variable case.
Namely, we first define
$\mathcal{J}$ to be the fractional field of $J$,
then analogus to
\eqref{eqndfnquasiandalmostholomorphicJacobiforms}
we put
\begin{equation}
	\widetilde{\mathcal{J}}(\Gamma)=\mathcal{J} \otimes \widetilde{\mathcal{M}}(\Gamma)
	\,,
	\quad
	\widehat{\mathcal{J}}(\Gamma)=\mathcal{J}  \otimes \widehat{\mathcal{M}}(\Gamma) \,,
	\end{equation}
  where $\widetilde{\mathcal{M}}(\Gamma),   \widehat{\mathcal{M}}(\Gamma)
$
  are as introduced in \eqref{eqnfractionalfieldadjoinedE2}.

\subsection{Uniformizations of genus one algebraic curves}
\label{secuniformizationofellipticcurves}

In topological recursion, we shall need to explicitly express
many quantities, including rational functions on a curve $C$ of genus one, in terms
of modular and Jacobi forms.
\\

First let $C$ be a compact Riemann surface of genus one.
It is a classical fact that $C$ can be uniformized by the complex plane $\mathbb{C}$.
That is to say, there exits a lattice $\Lambda_{\tau}=\mathbb{Z}\oplus \mathbb{Z}\tau, \tau\in \mathcal{H}$ depending on $C$
such that $C$ is biholomorphic to  $\mathbb{C}/\Lambda_{\tau}$. For a fixed complex structure on $C$, there are $\mathrm{SL}_2(\bZ)$-many choices of $\tau$ related by the $\mathrm{SL}_2(\bZ)$-action $\tau \mapsto (a\tau+b)/(c\tau+d)$ for $(a,b;c,d)\in \mathrm{SL}_2(\bZ)$.
\begin{dfn}
A uniformization of the genus one compact Riemann surface $C$ is a biholomorphism of $C$ with $\mathbb C /\Lambda_{\tau}$ for some $\tau\in \mathcal H$. Such a uniformization is given by a universal cover $\pi: \bC\to C$, such that $\pi$ is holomorphic and that the preimage of any point $p$ in $C$ is $u(p)+\Lambda_\tau$ for some $u(p)\in \bC$ with $\tau$ depending on the complex structure of $C$.
\end{dfn}
In practice, the Abel-Jacobi map provides a uniformizing parameter $u$ on $C$. 
We may look the particularly interesting case where $C$ is in the Weierstrass normal form, which in the affine patch $Z=1$ of the plane $\mathbb{P}^{2}$ with homogeneous coordinates $[X,Y,Z]$ is given by $Y^2 =4X^3-a X -b$
for some $(a,b)\in \mathbb{A}^2$ such that the curve $C$ is smooth.
A uniformization of $C$ is provided by the Weierstrass elliptic functions
\begin{equation}\label{eqweierstrassuniformization}
X=\wp(u,\tau)\,,
\quad
Y=\partial_{u}\wp(u,\tau)\,.
\end{equation}
Here to obtain $\tau, u$ from $C$, we first choose a Torelli marking $\{A,B\}$ on the curve $C$ and a reference point $O$
for the Abel-Jacobi map $u$. We take
\begin{equation}
\label{eqn:tau-weierstrass}
\tau={\int_{B} {dX\over Y} \over \int_{A} {dX\over Y}}\,,
\quad
u(p)=\int_{O}^{p} {dX\over Y}\,,\quad
\forall p\in C\,.
\end{equation}
A universal covering map $\pi:\bC\to C$ can be given by  \eqref{eqweierstrassuniformization}.
A uniformization is then determined up to translation (inducing shift of origin in the group law on $C$) and scaling (inducing homothety on $C$) on $\mathbb{C}$.
The translation ambiguity can be fixed by requiring that the
origin $O$ in the group law to be $[0,1,0]$ for example, while the homothety can be uniquely determined by requiring
$a,b$ to be the following modular forms
\begin{equation}\label{eqng2g3asmodularforms}
g_{2}={4\over 3}{\pi^{4}E_{4}}\,,\quad
g_{3}={8\over 27}{\pi^{6}E_{6}}\,.
\end{equation}

We would also consider the family\footnote{This is a universal family with
	the base having a moduli stack interpretation. See e.g. \cite{Katz:1976p, Dubrovin:1994} for a nice account on this.} of Weierstrass normal forms
\begin{equation}
\mathcal{W}:\,Y^2 Z=4X^3-a X Z^2-b Z^3
\label{eqnWeierstrassnormalform}
\end{equation}
as a family of curves in $\mathbb P^2$ and defined over $\mathcal{U}_{\mathcal{W}}:=(\mathbb{A}^{2}-\{ a^{3}-27 b^2=0\})/\mathbb{C}^{*}$,
where the $\mathbb{C}^{*}$ acts on
$\mathbb P^2$ with weights $(2,3,1)$ and acts  on $\mathbb{A}^{2}$ with weights $(4,6)$. This family serves as the reference family for the construction of uniformization for families of curves of genus one.\\

Any smooth projective curve $C$ of genus one is isomorphic to a plane curve in Weierstrass normal form. A uniformization for $C$ can then be obtained by transforming the curve into the Weierstrass normal form, and then applying the aforementioned results for the latter, as we shall see through the examples in Section \ref{secallfourexamples}. For the cases that we are interested in, the family $\mathcal{C}$ is usually defined by a complete intersection in a weighted projective space of dimension
$N$
with small $N$. Practically, reducing the defining equations to the Weierstrass normal form can be done following the algorithms in e.g., \cite{Connell1996:elliptic}.

\subsection{Ramification points for hyperelliptic curves of genus one}

We will need to identify the ramification points for
a hyperelliptic cover $p: C\rightarrow \mathbb{P}^{1}$ of
a genus one curve $C$ as the 2-torsion points on its Jacobian. The statement is as follows.

\begin{lem}\label{lemramificationofhyperellipticgenusonecurves}
	Suppose the genus one curve $C$ is equipped with a hyperelliptic structure $p: C\rightarrow \mathbb{P}^{1}$.
	\begin{enumerate}
	\item
The set of ramification points $R$ are identified with the group of $2$-torsion points of the group law, with the origin of the group law chosen to be any of the ramification points.
	\item
	Under the Abel-Jacobi map with the reference point chosen to be any of the ramification points,
the involution on $C$ exchanging the two sheets of the hyperelliptic cover $p: C\rightarrow \mathbb{P}^{1}$ is induced by the map $u\mapsto -u$ on the Jacobian variety of $C$.
	\end{enumerate}
\end{lem}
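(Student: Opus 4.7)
The plan is to prove part (2) first, which will essentially give part (1) as a corollary. The key observation is that on a genus one curve, the inverse map $P \mapsto -P$ in the group law coincides with the hyperelliptic involution once the origin is chosen at a ramification point, and the $2$-torsion points are exactly the fixed points of the inverse map.

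First I would record the basic facts. The hyperelliptic cover $p: C \to \mathbb P^{1}$ has degree $2$, so by Riemann-Hurwitz its ramification divisor has degree $2g+2 = 4$, and the ramification points are precisely the fixed points of the hyperelliptic involution $\iota: C \to C$ exchanging the two sheets. Since $\iota$ is a non-trivial holomorphic involution of $C$, its action on the one-dimensional space $H^{0}(C,\Omega^1_C)$ is by a scalar $c$ with $c^2 = 1$. If $c = 1$, then a generator $\omega$ would descend to the quotient $C/\iota \cong \mathbb P^{1}$, which has no nonzero holomorphic differentials; hence $\iota^{*}\omega = -\omega$.

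Next I would fix a ramification point $O \in R$ and use it as both the origin of the group law (identifying $C$ with its Jacobian via the Abel-Jacobi map) and as the reference point of the Abel-Jacobi map
\[
AJ: C \longrightarrow \mathrm{Jac}(C), \qquad AJ(P) = \int_{O}^{P}\omega.
\]
Since $\iota(O) = O$ and $\iota^{*}\omega = -\omega$, a short change of variables in the path integral gives
\[
AJ(\iota P) \;=\; \int_{O}^{\iota P}\omega \;=\; \int_{O}^{P}\iota^{*}\omega \;=\; -\int_{O}^{P}\omega \;=\; -AJ(P),
\]
where the middle equality uses that $\iota$ preserves $O$ and the class of paths modulo $H_{1}(C;\mathbb Z)$. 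Transporting this via the isomorphism $AJ$ shows that $\iota$ corresponds to the map $u \mapsto -u$ on $\mathrm{Jac}(C) \cong \mathbb C/\Lambda_{\tau}$, proving part (2).

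For part (1), the fixed points of $\iota$ on $C$ match the fixed points of $u \mapsto -u$ on $\mathbb C/\Lambda_{\tau}$ under $AJ$; the latter are the solutions of $2u \equiv 0 \pmod{\Lambda_{\tau}}$, i.e.\ the $2$-torsion subgroup $\{0, \tfrac12, \tfrac{\tau}{2}, \tfrac{1+\tau}{2}\}$. Since the fixed point set of $\iota$ is $R$, and both sets have cardinality four, this gives the claimed identification, with $O$ itself mapping to the origin. Finally I would note that the argument works verbatim with any other ramification point chosen as origin: translation on $\mathrm{Jac}(C)$ sends $u \mapsto -u$ to an affine involution whose fixed set is still a coset of the $2$-torsion, namely the same set $R$. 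I do not expect a real obstacle here; the only subtlety is justifying $\iota^{*}\omega = -\omega$, which is handled by the descent argument above.
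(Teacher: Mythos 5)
Your proof is correct, but it runs in the opposite direction from the paper's and uses different tools. You establish Part 2 first, via the standard facts that the hyperelliptic involution $\iota$ acts by $-1$ on $H^0(C,\Omega^1_C)$ (since the invariant part computes $H^0(\mathbb{P}^1,\Omega^1)=0$) and that the Abel--Jacobi map based at a fixed point of $\iota$ then intertwines $\iota$ with $u\mapsto -u$; Part 1 follows by matching fixed-point sets (four ramification points by Riemann--Hurwitz against the four $2$-torsion points). The paper instead proves Part 1 by a divisor-class computation --- $p^{*}([b_1]-[b_2])=2([r_1]-[r_2])$ is principal, hence $[r_1]-[r_2]$ is $2$-torsion in the Jacobian --- and Part 2 by Galois theory of the quadratic extension $k(C)\supset \mathbb{C}(\wp)$, arguing that the local involution at a ramification point determines an index-$2$ rational subfield which must be $\mathbb{C}(\wp)$, whose Galois involution is induced by $u\mapsto -u$. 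The paper's divisor argument has the virtue of generalizing verbatim to hyperelliptic curves of higher genus (relevant to the authors' stated plans for genus-two mirror curves), though as written it only shows that differences of ramification points are $2$-torsion and leaves the surjectivity onto all four $2$-torsion points to a counting argument; your fixed-point argument delivers the full identification in one stroke but is tied to genus one, where $C$ is isomorphic to its Jacobian. One small point: your closing remark about changing the base point is right, but the cleanest justification is that translating by $AJ(O')$ for another ramification point $O'$ conjugates $u\mapsto -u$ to $u\mapsto -u-2\,AJ(O')$, and $2\,AJ(O')=0$ precisely because $AJ(O')$ is $2$-torsion, so the involution is literally $u\mapsto -u$ in the new coordinate as well.
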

\begin{proof}
	\begin{enumerate}
		\item
		Taking any two of the branch points $b_{1},b_{2}$,
		denote the corresponding ramification points by $r_{1},r_{2}$.
		Then
		we have for the divisor class
		\begin{equation}
		p^{*} ([b_{1}]-[b_{2}])=p^{*}([b_{1}])-p^{*}([b_{2}])=2[r_{1}]-2[r_{2}]
		=2([r_{1}]-[r_{2}])\,.
		\end{equation}
		Since the
		left hand side is principal, so is the right hand side $2([r_{1}]-[r_{2}])$.
		Then $[r_{1}]-[r_{2}]$ is a $2$-torsion on the Jacobian of $C$.

		Picking once and for all any of the ramification points makes the genus one curve $C$ an elliptic curve
		whose origin $O$ in the group law is the chosen point.
		By the property of the Abel-Jacobi map (with reference $O$) as an isomorphism, we see that the corresponding
		difference of $r_{1}, r_{2}$
		in the
		group law of the elliptic curve $C$ is a $2$-torsion point in the group law.

		\item
		Recall that the uniformization of the algebraic curve
		\eqref{eqnWeierstrassnormalform} and the Abel-Jacobi map $u$
		are related through the Weierstrass elliptic functions
		in
		\eqref{eqweierstrassuniformization}, with which the origin $O$ of the group law of the elliptic curve $C$ is mapped
		to $[0,1,0]$ in the homogenized coordinates of $[\wp,\wp',1]$.
		It is a classical fact that rational function field $k(C)$ of a genus 1 curve $C$ is generated by $\wp,\wp'$ with the algebraic relation given by the Weierstrass equation
		\begin{equation}\label{eqnalgebraicrelationwpwp'}
		k(C)\cong \mathbb{C}(\wp,\wp')/ \langle (\wp')^2-(4\wp^3-g_{2}\wp-g_{3})\rangle \,.
		\end{equation}
		The Galois group for the Galois extension $k(C)$ of the field $\mathbb{C}(\wp)$ is generated by
		$*: \wp\mapsto \wp, \wp'\mapsto -\wp'$.
		It is induced by
		the reflection $u\mapsto -u$ in the $u$-plane which is the universal cover of the elliptic curve $C$.

		We claim that the local involution around any ramification point of any hyperelliptic cover $p: C\rightarrow \mathbb{P}^{1}$
		of the genus one curve $C$ must be
		the above one.
		To see this, we simply observe that by analytic continuation this local involution determines an index $2$ rational subfield over $\mathbb{C}$.
		The fixed locus of this involution includes at least the ramification point.
		Up to isomorphism there is only one such index $2$ subfield, namely, $\mathbb{C}(\wp)$.
		This shows that the desired statement is true.

	\end{enumerate}

\end{proof}

\subsection{One-parameter subfamilies of genus one mirror curve families}
\label{sec:one-parameter}

In later discussions in topological recursion, we only consider the cases when $\cC$ is one of mirror curve families in Examples \ref{ex:1}, \ref{ex:2}, \ref{ex:3} and \ref{ex:4}.
Each of these families of genus one smooth projective curves is given by (the projective closure)
of the equation $H(x,y,q)=0$ in the toric Fano surface $\mathbb{P}_{\Delta}$ as shown in
 \eqref{eqnHxy=0}. We also take the hyperelliptic structure $p: C\rightarrow \mathbb{P}^{1}$
on any fiber $C$ in the family $\cC$ to be the hyperelliptic structure $x$ determined by the brane structure so that Lemma
\ref{lemramificationofhyperellipticgenusonecurves} applies.

In our four examples, we would like to restrict to one-parameter subfamilies of the families of mirror curves. Their bases are Zariski open subsets of $\mathbb{P}^{1}$. These one-parameter families studied in this work are obtained by specializing a possibly multi-parameter mirror curve family
$\chi: \mathcal{C}\rightarrow \mathcal{U}_{\mathcal{C}}$ to non-trivial one-parameter sub-families.
For $\cX=K_{\bP^2}$, $\cC$ is an one-parameter family, for which the base $\cU_\cC$ is actually the thrice punctured $\bP^1$. For the other cases $K_S, S=\bP^1\times \bP^1, W\bP[1,1,2], \bF_1$, the base $\cU_\cC$ is two-dimensional. We take a rational affine curve $\cU_\rd$ in $\cU_\cC$, such that the restriction of the family $\cC$ to $\cU_\rd$, denoted by $\cC_\rd$, has non-constant complex structures. Moreover, in the partial compactification of $\cU_\cC$ where the point $(q_1,\dots,q_\fp)=0$ is included, we require $0$ is also in the closure of $\cU_\rd$. Then we denote the one-parameter compactified mirror curve family by
$\chi_{\rd}:\cC_\rd\to \cU_\rd$, and the affine mirror curve family by $\chi_{\rd}^{\circ}:\cC_\rd^\circ \to \cU_\rd$.

We would like the following statement to be true.


\begin{assumption}
\label{lemwpuniformizationofoneparameterfamiliesgenusonecurves}
Suppose a non-trivial one-parameter family $\mathcal{C}_\rd\to \mathcal U_\rd$ of curves of genus one is
obtained as a subfamily of the mirror curve family in \eqref{eqnHxy=0}, and
is given by the equation $H(x,y,s)=0$. Then any rational function in $k(\mathcal{C}_\rd/\mathcal{U}_\rd)$ is a rational function of $\wp,\wp'$, with coefficients lying in the fractional field $\mathcal{M}(\Gamma)$ of the ring $M(\Gamma)$ of modular forms whose modular group $\Gamma$ depends on $\mathcal{C}_\rd$. Furthermore, there exits a holomorphic map $\mathbb C\times \mathcal H \to \cC_\rd$ given by $(u,\tau)\mapsto ((x(u,\tau),y(u,\tau),1),s(\tau))$, which fiberwisely gives a uniformization for $\mathcal{C}_{\rd,s(\tau)}\subseteq\mathbb P_\Delta$.
\end{assumption}

In the next section (Section \ref{secallfourexamples}), we will express (the generators of) rational functions over all these examples (and selected subfamilies when $\fp=2$) as modular forms in $\tau$, $\wp$ and $\wp'$ -- so this assumption is indeed true for our purposes.
\\

Let $\tilde A,\tilde B$ be cycles  in $K_1(C^\circ;\bZ)$ on a fiber  $C$ such that when passing to $H_1(C;\bZ)$, their images $\bar A, \bar B$ constitute a Torelli marking. We can recover the complex structure parameter $\tau$ of $C$ from
\begin{equation}
\frac{1}{2\pi\sqrt{-1}}\int_{\bar A} \lambda=t\,,\quad \frac{1}{2\pi\sqrt{-1}} \int_{\bar B}\lambda=t^B\,,\quad  \frac{\partial t^B}{\partial t}=\tau\,.
\end{equation}
This definition is compatible with \eqref{eqn:tau-weierstrass} and \eqref{rem:schiffer}. The parameter $t$ is called the \emph{flat} coordinate. The coordinate $t$ is equal to the K\"ahler parameter $T_1$ for $K_{\bP^2}$, or a linear combination of $T_1, T_2$ for the other cases. After restricting to $\cU_\rd$, all of $t$, $T_1$, $T_2$ are functions of $\tau$ with Assumption \ref{lemwpuniformizationofoneparameterfamiliesgenusonecurves}, which is true for all our examples. 

\subsection{Examples}
\label{secallfourexamples}

In this section, we give the uniformizations for the mirror curve families of
$K_{ \bP^2}$, $K_{\bP^1\times \bP^1}$, $K_{W\bP[1,1,2]}$ and $K_{\mathbb F_1}$, displayed in
Example \ref{ex:1}, \ref{ex:2}, \ref{ex:3}, \ref{ex:4}
respectively.
For each of these examples, the $\wp$-uniformization is derived by transforming the curve family $\mathcal{C}$ to the Weierstrass normal form
\eqref{eqnWeierstrassnormalform},
with the coordinates carefully so that the coefficients in the degree $1$ and $0$ terms in the resulting Weierstrass normal form
become exactly $-g_{2}, -g_{3}$ respectively.
The derivations are straightforward.

In all of our examples, the curve in the chosen affine patch is defined by the equation $(y+h(x))^{2}=g(x)$ as shown in
\eqref{eqnhyperellipticformofmirrorcurve} and \eqref{eqnsimplechangeonybyh}.
For the $K_{\mathbb{P}^{2}}$ and $K_{\mathbb{F}_{1}}$ cases, the degree of $g(x)$ is $3$.
Taking the origin $O$
for the group law to be the ramification point $\infty=[0,1,0]$ fixes the ambiguity in the shift $\epsilon$
of the argument in $\wp(u+\epsilon),\wp'(u+\epsilon)$ for the uniformization to be zero.
For the other cases, we choose once and for all a ramification point $O$ to be the origin.
Then in the rational functions $x(\wp,\wp'),y(\wp,\wp')$ in terms of $\wp(u+\epsilon),\wp'(u+\epsilon)$,
we have that $[x_{O}, y_{O},1]:=[x,y,1]|_{u=0}$ is the coordinate for the chosen ramification point $O$.
With these choices, the hyperelliptic involution is induced by $u\mapsto -u$ as shown in Lemma
 \ref{lemramificationofhyperellipticgenusonecurves}.

We shall also discuss the subtlety on multiplier systems mentioned in Section \ref{sec:jacobi}. The case by case analysis below, besides confirming Assumption \ref{lemwpuniformizationofoneparameterfamiliesgenusonecurves}, also proves the following lemma.
\begin{lem}
\label{lemmodularityofvaluesatramificationpoints}
	Consider the local toric Calabi-Yau 3-folds $\cX=K_{S}, S=\mathbb{P}^{2}, W\mathbb{P}[1,1,2], \mathbb{P}^{1}\times
	\mathbb{P}^{1}, \mathbb{F}_{1}$. Consider non-trivial one-parameter subfamilies of the mirror curves with  hyperelliptic structure determined by the corresponding brane.
	Then the generator of the rational functional field of the base  is a modular function in $\mathcal{M}( \Gamma)$, while
	the values of the rational functions $x,y$ at the ramification points are meromorphic modular forms in
	$\mathcal{M}(\Gamma(2)\cap \Gamma)$,
	for some modular group $\Gamma$ depending on the one-parameter family.
	\end{lem}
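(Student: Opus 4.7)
The plan is to carry out a uniform case-by-case procedure for each of the four one-parameter families, driven by the $\wp$-uniformization framework set up in Section \ref{secuniformizationofellipticcurves} and the identification of ramification points with $2$-torsion points given by Lemma \ref{lemramificationofhyperellipticgenusonecurves}. The starting point for each example is the affine equation $H(x,y,q)=0$ from Examples \ref{ex:1}--\ref{ex:4}, restricted to the chosen one-parameter subfamily with base parameter $s$ on a rational curve $\mathcal U_{\rd}\subset \mathcal U_{\cC}$. For each example I will (i) perform the substitution $\tilde y=y+h(x)$ from \eqref{eqnsimplechangeonybyh} to put the curve in the hyperelliptic form $\tilde y^{2}=g(x)$, then (ii) normalize by a linear change of the $x$-coordinate (times a rescaling fixed by requiring the degree-$1$ and degree-$0$ coefficients to be $-g_{2},-g_{3}$) to bring the equation to the Weierstrass normal form \eqref{eqnWeierstrassnormalform} so that \eqref{eqng2g3asmodularforms} applies. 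The resulting identification reads $x=x(\wp,\wp';\tau)$, $y=y(\wp,\wp';\tau)$, with coefficients that are polynomials in $E_{4},E_{6}$ and in the coefficients of $g$ and $h$, divided by powers of the same, all expressed as functions of $\tau$ via the inversion of the $j$-invariant formula $j=1728\,g_{2}^{3}/(g_{2}^{3}-27g_{3}^{2})$.

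The second step is to identify the modular group $\Gamma$ for each example. The strategy is to observe that the $j$-invariant of the family $\cC_{\rd}$ is an explicit rational function of $s$ from which one reads the configuration of singular fibers (cusps) and the monodromy. In each case the complement of the singular locus in $\mathcal U_{\rd}$ (the thrice-punctured $\mathbb P^{1}$ for $K_{\mathbb P^{2}}$; analogous for the others) is, up to a finite cover needed to trivialize the multiplier systems, a modular curve $\Gamma\backslash\mathcal H^{*}$. This produces the Hauptmodul for $\mathcal U_{\rd}$ as a modular function for $\Gamma$, giving the first claim. Concretely, for $K_{\mathbb P^{2}}$ one recovers $\Gamma_{0}(3)$ (consistent with the formulae for $\phi$ and $\kappa$ quoted after Example \ref{w03ex}); the other three cases produce congruence subgroups of level dividing $4$, and I will verify them by matching the explicit $j$-function rational expressions against known lists (e.g.\ \cite{Maier:2009, Maier:2011}).

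The main step, and the one where the hyperelliptic structure is essential, is the computation of the values of $x,y$ at the ramification locus $R$. By Lemma \ref{lemramificationofhyperellipticgenusonecurves}, once we choose one ramification point to be the origin $O$ of the group law, $R$ is identified with the $2$-torsion subgroup $E[2]=\{0,\tfrac12,\tfrac\tau2,\tfrac{1+\tau}{2}\}$ in the $u$-plane. Under the normalized Weierstrass uniformization, $\wp$ evaluated at the three nontrivial $2$-torsion points gives the roots $e_{1},e_{2},e_{3}$ of $4X^{3}-g_{2}X-g_{3}$, and these are the classical expressions
\begin{equation*}
\wp(\tfrac12)=\tfrac{\pi^{2}}{3}(\theta_{3}^{4}+\theta_{4}^{4}),\qquad \wp(\tfrac{\tau}{2})=-\tfrac{\pi^{2}}{3}(\theta_{2}^{4}+\theta_{3}^{4}),\qquad \wp(\tfrac{1+\tau}{2})=\tfrac{\pi^{2}}{3}(\theta_{2}^{4}-\theta_{4}^{4}),
\end{equation*}
which are modular forms (of weight $2$, with a Dirichlet-type multiplier) for $\Gamma(2)$. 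Meanwhile $\wp'$ vanishes identically on $E[2]\setminus\{0\}$, and at $u=0$ the rational functions $x,y$ take values determined by the explicit change of variables from step (i)--(ii). Substituting these values of $\wp$ into the rational expressions $x(\wp,\wp';\tau)$ and $y=-h(x)+\tilde y$ (with $\tilde y=0$ at ramification) then writes each $x(r),y(r)$, $r\in R$, as a rational function over $\mathcal M(\Gamma)$ in the $\Gamma(2)$-modular forms above, which lands in $\mathcal M(\Gamma(2)\cap\Gamma)$ as required.

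The main obstacle I anticipate is the multiplier-system bookkeeping, since $\theta$-constants and the $\eta$-function enter with nontrivial (and not always Dirichlet) multiplier systems that also appear in the auxiliary forms $\phi,\kappa$ used to pull the Weierstrass uniformization back to the mirror-curve coordinates. Following the discussion after Definition \ref{dfnalmostholomorphicmodularforms}, I will handle this by passing, if necessary, to a finite-index subgroup on which all multiplier systems appearing become trivial, and then absorbing this subgroup into the definition of $\Gamma$; since the statement of the lemma allows $\Gamma$ to depend on the family, no generality is lost. A secondary technical point is checking that the reference ramification point $O$ chosen to play the role of the group-law origin gives uniformly modular values in $\mathcal M(\Gamma)$ itself (rather than only in $\mathcal M(\Gamma(2)\cap\Gamma)$); this follows from the fact that fixing $O$ and scaling so that $(a,b)=(g_{2},g_{3})$ makes $x_{O},y_{O}$ rational expressions in $E_{4},E_{6}$ and in the coefficient data of $h,g$, with no $\Gamma(2)$-level input.
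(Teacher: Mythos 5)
Your proposal follows essentially the same route as the paper's proof, which is carried out case by case in Section \ref{secallfourexamples}: reduce each restricted family to Weierstrass normal form, read off the Hauptmodul and hence $\Gamma$ from the $j$-invariant, and then use Lemma \ref{lemramificationofhyperellipticgenusonecurves} together with the classical fact that $\wp,\wp'$ evaluated at $2$-torsion are modular forms for $\Gamma(2)$ to place $x(r),y(r)$ in $\mathcal{M}(\Gamma(2)\cap\Gamma)$. The one point you under-weight is the appearance of radicals: the homothety $\kappa$ and the finite branch-point coordinates (roots of $g(x)=0$, cf.\ \eqref{eqnbranchpointcoordinatesKF0}) involve square roots of modular functions, and a square root of a modular function is \emph{not} automatically modular for any finite-index subgroup — the paper must verify case by case (via Maier's tables, e.g.\ identifying $(b_0^2-4b_4-8)^{1/2}$ and $(b_0^2-4b_4+8)^{1/2}$ with the Hauptmoduls $t_4^{1/2}$ on $\Gamma_1(8)$ and $t_8+4$ on $\Gamma_0(8)$) that these particular roots are again modular functions, so your "pass to a finite-index subgroup to trivialize multipliers" step hides a genuine, though ultimately successful, verification; relatedly, your closing claim that $x_O,y_O$ lie in $\mathcal{M}(\Gamma)$ "with no $\Gamma(2)$-level input" only holds after $\Gamma$ has been shrunk to absorb exactly these radicals.
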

Note that here the modular group $\Gamma$ is not necessarily the maximal subgroup for which
the generator of the rational functional field of the base is modular.
Also we single out the role of the modular group $\Gamma(2)$ intentionally-- its appearance is due to the fact that
the ramification points are the 2-torsion points-- as we shall see below.

\subsubsection{$K_{\mathbb{P}^{2}}$}
\label{secexKP2}

The affine part of the mirror curve given in Example \ref{ex:1} is equivalent to
\begin{equation}
x^{3}+y^{2} +y -3\phi xy =0\,.
\end{equation}
The parameter $\phi$
is related to the parameter $q_{1}$ in  Example \ref{ex:1} by $q_{1}=(-3\phi)^{-3}$.
It is uniformized by
\begin{equation}
x=(-4)^{1\over 3}\kappa^{2} \wp (u)+{3\over 4}\phi^{2}\,,
\quad
y=\kappa^{3}\wp '(u)-({1-3
	\phi x\over 2})\,,
\end{equation}
with
\begin{equation}
\phi(\tau)=\Theta_{A_{2}}(2\tau) {\eta(3\tau) \over \eta(\tau)^{3}  }\,,
\quad
\kappa=\zeta_{6}\,2^{-{4\over 3}} 3^{1\over 2} \pi^{-1}\eta(3\tau)\eta(\tau)^{-3}\,,
\end{equation}
where $\Theta_{A_{2}}$ is the $\theta$-function for the $A_{2}$-lattice and $\eta$ is the $\eta$-function as a modular form.
The quantities $\phi, \kappa$
are modular forms for $\Gamma_{0}(3)$ with non-Dirichlet multiplier systems.
By passing to the smaller modular subgroup $\Gamma_{0}(9)$, we see that both
$\Theta_{A_{2}}(2\tau)$ and $\kappa$, and hence $\phi$,
are modular forms with the same quadratic multiplier system, which is given by the Dirichlet character
$\chi_{-3}$ taking the values $1,-1$ on $1,-1$ modulo $3$ respectively and zero otherwise.
See \cite{Borwein:1991, Borwein:1994, Berndt:1995, Maier:2009, Maier:2011} for details.
This confirms the discussion on multiplier systems following Definition \ref{dfnquasimodularforms} in Section \ref{sec:jacobi}.
By further passing to the subgroup $\Gamma= \Gamma_{0}(9)\cap\Gamma_{1}(3)$, all of them have trivial multiplier systems.

Under the
uniformization, the point $\infty=[0,1,0]$
corresponds to the origin $O$ of the group law, which is given by $u=0$ on the Jacobian.
The values of $x,y$ at the ramification points $u=1/2, {\tau/2}, {1+\tau/2}$
are meromorphic modular forms for
$\Gamma(2)\cap (\Gamma_{0}(9)\cap \Gamma_{1}(3))$,
by the standard fact that the values of $\wp, \wp'$ at these points are weight-two
modular forms with trivial multiplier systems for $\Gamma(2)$.
See Section \ref{secmodularityattorsion} for more details on this.

This family admits furthermore a uniformization via Jacobi $\theta$-functions
compatible with the above Weierstrass $\wp$-uniformization in the sense that the origins
for the group law are the same. See \cite{Dolgachev:1997} for details.
It turns out that the open GW point $[0,-1,1]$ in \eqref{eqnopenGWpoint} is a $3$-torsion point.

\subsubsection{$K_{W\mathbb{P}[1,1,2]}$}
\label{secexKWP}

The affine part of the mirror curve given in Example \ref{ex:3} is equivalent to
\begin{equation}\label{eqnmirrorcurveofKF0}
y^2+{x^{4}}+y+b_{4} x^{2}y+b_{0}xy=0\,.
\end{equation}
The parameters $b_{0},b_{4}$ are related to those in Example \ref{ex:3}
by
$
q_{1}=b_{4}b_{0}^{-2},
q_{2}=b_{0}^{-4}
$.
The rational function $x$ induces a hyperelliptic structure on the mirror curve with generic
$b_{4},b_{0}$.
Another different hyperelliptic structure for the mirror curve is induced from
the equation $y^{2}+1+x^2 y+ b_{4}y+b_{0}xy=0$. The discussion below applies similarly to this case.

The $\wp$-uniformization can be obtained from the algorithm in \cite{Connell1996:elliptic}.
It is accomplished by the following sequence of change of coordinates which induce bi-regular maps on the curves.
First we make the change of coordinates
\begin{equation}
\alpha=2^{2\over 3}\kappa^{2} X -{1\over 12} (b_{0}^{2}+2 b_{4})\,,\quad
\beta=\kappa^{3}  Y-{1\over 2}b_{0}  (\alpha+{1\over 2}b_{4})\,,
\end{equation}
where $\kappa$ is some constant arising from homothety.
Then we set
\begin{equation}
x=\beta^{-1}\left(2^{2\over 3}\kappa^{2} X +{1\over 3} (b_{4}-{1\over 4}b_{0}^{2}) \right)\,,\quad
y=-{1\over 2} +x (\alpha x -{1\over 2}b_{0})-{1\over 2} (1+b_{0}x+b_{4}x^2)\,.
\end{equation}
Then the equation for the curve becomes the Weierstrass normal form
\begin{equation}
Y^{2}=4X^{3}-a  X-b\,,
\end{equation}
with
\begin{equation}
a=  \kappa^{-4}
{(b_{0}^{4}-8b_{0}^2 b_{4}+16 b_{4}^2-48)\over  2^{{1\over 3}} \cdot 24}\,,
\quad
b=- \kappa^{-6} {(b_{0}^2-4b_{4})
	(b_{0}^4-8b_{0}^2 b_{4}+16 b_{4}^2-72)\over 864}
\,.
\end{equation}
The $j$-invariant is
\begin{equation}
j={(b_{0}^4-8b_{0}^2 b_{4}+16 b_{4}^2-48)^{3}\over
	(b_{0}^2-4 b_{4}+8)(b_{0}^2-4 b_{4}-8)}\,.
\end{equation}
From these computations it is easy to see that the parameters $b_{0},b_{4}$ enter
the discriminant and the $j$-invariant through the combination
\begin{equation}
s=(b_{0}^2-4b_{4})^2={ (1-4 q_{1})^{2}\over q_{2}}
\,,
\quad
j={(s-48)^{3} \over s-64}\,.
\end{equation}
We recognize (see for instance \cite{Maier:2009}) that $s$ is a Hauptmodul $t_{2}+64$ for $\Gamma_{0}(2)$.
Up to an $SL_{2}(\mathbb{Z})$-transform, one has
\begin{equation}
t_{2}=64 {   (\theta_{2}^{4}(2\tau) +\theta_{3}^{4} (2\tau))^{2}    \over  \theta_{4}^{8}(2\tau)}-64\,.
\end{equation}
Solving $a=g_{2}, b=g_{3}$, we obtain
\begin{equation}\label{eqntheta42tau}
\kappa=2^{-{1\over 3}} \pi^{-1} \theta_{4}^{-2} (2\tau)\,.
\end{equation}
This is a modular form for $\Gamma_{0}(4)$ with a non-Dirichlet multiplier systems (see for instance \cite{Maier:2009, Maier:2011}).
By passing to the smaller modular subgroup $\Gamma_{0}(4)\cap \Gamma(2)$, it has the Dirichlet multiplier system
$\chi_{-4}$, see \cite{Maier:2011}. By further passing to, say, $\Gamma_{1}(4)\cap \Gamma(2)$, it then has the trivial multiplier system.\\

We now consider the shift $\epsilon$ in
$X=\wp(u+\epsilon), Y=\wp'(u+\epsilon)$.
It is such that the point $[x_{O},y_{O},1]$ is a ramification point for
\eqref{eqnmirrorcurveofKF0}. By completing square, we see that \eqref{eqnmirrorcurveofKF0} is transformed into
\begin{equation}\label{eqnhyperellipticcurvecompletingsquare}
(y+h(x))^{2}=g(x)\,,
\quad
h(x)={1\over 2} (1+b_{0}x+b_{4}x^2)\,,
\quad
g(x)=-x^{4}+h^2(x)\,.
\end{equation}
In particular,  the coordinate for the branch point $x_{O}$
satisfy the equation $g(x_{O})=0$ which for generic parameters $(b_{0},b_{4})$
has four distinct finite solutions.
These four solutions are given by
\begin{equation}\label{eqnbranchpointcoordinatesKF0}
x={  -b_{0}\pm \sqrt{b_{0}^{2}-4 (b_{4}+ 2)} \over 2 (b_{4}+ 2)}\,,
\quad
{  -b_{0}\pm \sqrt{b_{0}^{2}-4 (b_{4}- 2)} \over 2 (b_{4}-2)}\,.
\end{equation}
Recall that $s=(b_{0}^2-4b_{4})^2$ is a modular function $t_{2}+64$ for $\Gamma_{0}(2)$,
we claim that the square roots
$(b_{0}^{2}-4b_{4}-8)^{1\over 2}, (b_{0}^{2}-4b_{4}+8)^{1\over 2}$ are also modular functions, by passing to a smaller modular subgroup $\Gamma<\Gamma_{0}(2)$.
Indeed, from the formulae in \cite{Maier:2009}, we see that
\begin{equation}
(b_{0}^{2}-4b_{4}-8)=t_{4}
\end{equation}
for a Hauptmodul $t_{4}$ for $\Gamma_{0}(4)$.
Up to a $SL_{2}(\mathbb{Z})$ transform on $\tau$, it is given by
\begin{equation}
t_{4}(\tau)=2^{8} {\eta^{8}(4\tau)\over \eta^{8}(\tau)}\,.
\end{equation}
Hence $(b_{0}^{2}-4b_{4}-8)^{1\over 2}$
is a modular form for $\Gamma_{0}(4)$ with a quadratic multiplier system.
Basing on the explicit multiplier system for $\eta(\tau)$ (given in e.g.  \cite{Maier:2011}) one can in fact prove that
it is a modular function for $\Gamma_{1}(8)$ with trivial multiplier system.
We also have
$t_4+16=(b_{0}^{2}-4b_{4}+8)=(t_{8}+4)^{2}$ for a certain Hauptmodul $t_{8}$ for the modular group
$\Gamma_{0}(8)$, see again \cite{Maier:2009} for the details. That is, $(b_{0}^{2}-4b_{4}+8)^{1\over 2}=t_8+4$
is modular function for $\Gamma_{0}(8)$.

Therefore by passing to the smaller modular subgroup $\Gamma_{1}(8)$,  the roots
do not create trouble in discussing modularity.
Furthermore, by making use of a $\theta$-uniformization similar to the $K_{\mathbb{P}^{2}}$ case, we see that the open GW point \eqref{eqnopenGWpoint} is a $4$-torsion point.\\

One can obtain interesting non-trivial one-parameter families by restricting to
one-dimensional subspaces in the $(b_{0}, b_{4})$-space with non-constant $b_{0}^2-4b_{4}$
which determines the complex structure through the $j$-invariant above.
For example, by restricting to $b_{4}=0$, we get an one-parameter family parametrized by $b_{0}$
such that $b_{0}^{4}$ is the Hauptmodul $t_{2}+64$ for $\Gamma_{0}(2)$.
This corresponds to the one-parameter family
\begin{equation}
(q_{1},q_{2})=(0, s)\,,
\quad
s=(t_{2}+64)^{-1}\,.
\end{equation}
Hence indeed as discussed in Section \ref{sec:one-parameter}, after the restriction both $b_{0},b_{4}$
become modular functions for  the modular group $\Gamma_{0}(2)$ determined by the subfamily.
In particular, $s$ is a modular function for the subgroup $\Gamma=\Gamma_{1}(8)$.

By passing to the intersection with $\Gamma_{1}(4)\cap\Gamma(2)\cap\Gamma_{1}(8)=\Gamma(2)\cap \Gamma$
which incorporates the multiplier system for $\kappa$ in the uniformization and the issue on roots of modular forms above,
we see that the value of $x$ and hence of $y=-h(x)$
at any ramification point are modular forms for  $\Gamma(2)\cap \Gamma$.

\subsubsection{$K_{\mathbb{P}^{1}\times \mathbb{P}^{1}}$}
\label{P1timesP1Mirrorcurveuniformization}

Then affine part of the mirror curve given in Example \ref{ex:2} is equivalent to
\begin{eqnarray}
y^{2}+(1+x+q_{1}x^2) y+q_{2}x^{2}=0\,.
\end{eqnarray}
We follow the algorithm in \cite{Connell1996:elliptic} to reduce it to the Weierstrass normal form.
This is accomplished by the following sequence of change of coordinates
which induce bi-regular maps on the curves.
First we set
\begin{equation}
\alpha= 2^{2\over 3}\kappa^{2} X+{1\over 12} (-1-2q_{1}+4q_{2})\,,\quad
\beta= \kappa^{3} Y- {1\over 2} (\alpha+{1\over 2}q_{1})\,,
\end{equation}
where again $\kappa$ is some constant arising from homothety.
Then we make a change of coordinates
\begin{equation}
x=   \beta^{-1}\left(2^{2\over 3}\kappa^{2} X +{1\over 6} (1+2q_{1}-4q_{2})\right)\,,\quad
y=-{1\over 2}+ x (\alpha x -{1\over 2})  - {1\over 2} (1+x +q_{1} x^2)\,.
\end{equation}
Then the equation for the curve becomes the Weierstrass normal form
\begin{equation}
Y^{2}=4X^{3}-a X-b
\end{equation}
with
\begin{eqnarray}
a&=&2^{-{1\over 3}} 24^{-1} \kappa^{-4}
( 16 q_{1}^2-16 q_{1} q_{2} +16 q_{2}^2 - 8 q_{1}-8q_{2}+1)
\,,\nonumber \\
b&=& 864^{-1}\kappa^{-6}
(4q_{1}+4q_{2}-1)
(16 q_{1}^2-40 q_{1}q_{2} +16 q_{2}^2 - 8 q_{1}-8q_{2}+1)\,.
\end{eqnarray}
The $j$-invariant is
\begin{eqnarray}
j&=&
{ ( 16 q_{1}^2-16 q_{1}q_{2} +16 q_{2}^2 - 8 q_{1}-8q_{2}+1)^3
	\over
	q_{1}^2 q_{2}^2 ( 16 q_{1}^2-32 q_{1} q_{2} +16 q_{2}^2 - 8 q_{1}-8q_{2}+1)}\,.
\end{eqnarray}
From this it is easy to see that the parameters $q_{1},q_{2}$ determine the complex
structure of the curve through
\begin{equation}
s= 16 {q_{1}^2+q_{2}^2\over q_{1}q_{2}}- 8 {q_{1}+q_{2}\over q_{1}q_{2}}+{1\over q_{1}q_{2}}\,,
\quad
j={ (s-16)^3\over s-32}\,.
\end{equation}
We recognize that $s$ is a Hauptmodul for $\Gamma_{0}(2)$.\\

One can obtain one-parameter subfamilies by restrictions to one-dimensional spaces with non-constant $j$.
For example,
	taking $q_{1}=q_{2}=s$, we have
	\begin{eqnarray}
	j(s)&=&
	{(1-16 s+16s^2)^3\over s^4 (1-16s)} \,.
	\end{eqnarray}
	We recognize that $s$ is the Hauptmodul $-1/t_{4}$ for $\Gamma_{0}(4)$, see e.g. \cite{Maier:2009} for details.
	One can then solve for $\kappa$ to be
	\begin{equation}\label{eqnkappafromGamma04}
	\kappa=2^{-{7\over 3}}\pi^{-1}\theta_{2}^{-2}(2\tau)\,.
	\end{equation}
	From  \cite{Maier:2011} we know this is a modular form for $\Gamma_{0}(4)\cap \Gamma(2)$
	with the Dirichlet character $\chi_{-4}$.
	By passing to the subgroup  $\Gamma_{1}(4)\cap \Gamma(2)$, we see that $\kappa$ is a modular form with trivial multiplier system.
	A similar computation as in the previous cases by using $\theta$-uniformization shows that the open GW point \eqref{eqnopenGWpoint} is an $8$-torsion.
Hence indeed as discussed in Section \ref{sec:one-parameter}, after the restriction both $q_{1},q_{2}$
become modular functions for a certain modular group $\Gamma$ depending on the subfamily.

Similar discussions in Section \ref{secexKWP} on the shift $\epsilon$ and on values of
$x,y$ at ramification points apply. Namely,
the values of $x$ and hence of $y=-h(x)$
at any ramification point are modular forms for $\Gamma(2)\cap\Gamma$.
We omit the tedious computations here.

\subsubsection{$K_{\mathbb{F}_{1}}$}

The affine part of the mirror curve given in Example \ref{ex:4} is equivalent to
\begin{eqnarray}\label{eqnKF1mirror1}
y^2+y+xy+q_1x+q_2x^2y=0\,.
\end{eqnarray}
For this hyperelliptic structure, we
first apply
\begin{eqnarray*}
\alpha=4^{1\over 3}\kappa^{2}X-{1\over 3}({1\over 4}+{1\over 2}q_2)\,,\quad
\beta=\kappa^{3}Y-\left(({1\over 2}-q_1)\alpha+{1\over 4}{q_2}\right)\,,
\end{eqnarray*}
where $\kappa$ is an undetermined constant arising from homothety.
Then we set
\begin{eqnarray*}
x=\beta^{-1}{(\alpha+{q_2\over 2}-q_1^2+q_1)}\,,\quad
y=-{1\over 2}(1+x+q_2x^2)-{1\over 2}+x(x\alpha-({1\over 2}-q_1))\,.
\end{eqnarray*}
Then equation \eqref{eqnKF1mirror1}  is transformed to the Weierstrass normal form
\begin{eqnarray}
Y^2=4X^3-a X- b
\end{eqnarray}
with
\begin{eqnarray}
a&=&2^{-{1\over 3}} 24^{-1}\kappa^{-4} \left( (1-4q_2)^2+24q_1q_2\right)\,,\\
b&=& 864^{-1}\kappa^{-6}\left( (4q_2-1)^3+36q_1q_2(4q_2-1)-216q_1^2q_2^2 \right)\,.
\end{eqnarray}
The $j$-invariant is given by
\begin{eqnarray}
j=-{(1 -8 q_2 +24  q_1 q_2 + 16 q_2^2)^3\over
 q_1^2 q_2^3 \left(q_1 - (1 - 4 q_2)^2 - 36 q_1 q_2 + 27 q_1^2 q_2\right)}\,.
\end{eqnarray}

We can obtain interesting subfamilies by restricting the above two-parameter family to one-dimensional ones.

\begin{itemize}

	\item
First make the change of parameters $q_1=\tilde{q_1} \tilde{q_2}^{-1},q_2=\tilde{q_2}$.
Then taking $\tilde{q}_{2}=0, \tilde{q}_{1}=s$, we obtain
	\begin{equation}
	j=- {(1+24s)^{3}\over s^{3} (1+27 s)}\,.
	\end{equation}
	We recognize that $s$ is a Hauptmodul for $\Gamma_{0}(3)$, see e.g. \cite{Maier:2009} for details.
	This is consistent with the observation that setting $\tilde{q}_{1}=0$ in \eqref{eqnKF1mirror1}
	reduces the mirror curve of $K_{\mathbb{F}_{1}}$
	to one which is isomorphic to the mirror curve of $K_{\mathbb{P}^{2}}$ after a change of coordinates corresponding to a biregular morphism. In fact, this amounts to the
	restriction from the set of lattice points in the defining polytope of the former to that of the latter.
	In particular, the open GW point \eqref{eqnopenGWpoint} is a 3-torsion in the new coordinates.

	\item

	Taking $q_{1}=1, q_{2}=s$, we obtain
	\begin{eqnarray}
	j={(16s^2+16s+1)^3\over s^4(16 s+1)}\,.
	\end{eqnarray}
	We recognize that $s$ is the Hauptmodul $1/t_{4}$ for $\Gamma_{0}(4)$.
	Choosing $s$ to be
	\begin{equation}
	s=2^{-8}{\eta(\tau)^8\over \eta(4\tau)^8}\,,
	\end{equation}
	one can solve for $\kappa$ to be
\begin{equation}
\kappa=2^{-{13\over 3}} \pi^{-1}{\eta(2\tau)^2\over \eta(4\tau)^4}\,.
\end{equation}
This is proportional to the quantity in \eqref{eqnkappafromGamma04}
and the subtlety on the multiplier systems 	can be resolved by passing to a smaller modular subgroup in the same way.
	Deriving the $u$-coordinate for the open GW point is more complicated in this case.
\end{itemize}

Indeed in these examples, as discussed in Section \ref{sec:one-parameter}, after the restriction both $q_{1},q_{2}$
become modular functions for a certain modular group $\Gamma$ depending on the subfamily.

Similar to Section
\ref{secexKWP},
by passing to the smaller modular subgroup
if needed, we see that
the values of $x$ and hence of $y=-h(x)$
at any ramification point are modular forms for $\Gamma(2)\cap\Gamma$.
Again we omit the tedious computations here.

\begin{rem}
Another hyperelliptic structure is
\begin{eqnarray}
y^2+(1+x +q_{1}x^2)y+ q_2 x^3=0\,.
\end{eqnarray}
The underlying algebraic curves are bi-regular, with the bi-regular map easily identified from the relations to the toric characters in
\eqref{eqn:PDelta}.
The $\wp$-uniformization is again derived from the algorithm in \cite{Connell1996:elliptic}.
The details are as follows.
We first make the change of variables
\begin{equation}
\alpha= 2^{2\over 3} \kappa^{2} X-{1\over 12}(2 q_{1}+1)\,,\quad
\beta=  \kappa^{3} Y-{1\over 2} (\alpha+{1\over 2} q_{1}-q_{2})\,.
\end{equation}
Then we set
\begin{equation}
x= \beta^{-1} \left(   \alpha+{1\over 2} q_{1} \right)\,,\quad
y=-{1\over 2}+ x    (\alpha x -{1\over 2})  +
{1\over 2} (1+x+q_{1}x^{2})\,.
\end{equation}
The Weierstrass normal form is the same as the one for the first hyperelliptic structure as it should be.
The different hyperelliptic structures have different ramification data and open GW points.
One can consider the special one-parameter sub-families as above.
The discussion in Section \ref{secexKWP} on the values of $x,y$ at the ramification points
also applies here.

\end{rem}

\begin{rem}
	Invoking the correspondence between the linear relations in the homogeneous quotient
	construction of toric variety and the Mori cone of curves in the toric variety, we see that the above specializations
	correspond to different walls in the second fan, which models the moduli space of K\"ahler structures of the A-model.
	Hence topological recursion, when combined with the modularity studied in this work, provides a promising tool in studying the phase transition and wall crossing phenomena, along the lines in e.g. \cite{Witten:1993, Chiang:1999tz, Alim:2008kp}.
	We hope to return to this in a future work.

\end{rem}

\section{Proof of main theorems}
\label{sec:proof}

In this section we prove the main theorems for  the examples
$\cX=K_S$ for $S=\bP^2, \bP^1\times \bP^1, W\bP[1,1,2], \mathbb F_1$.
We will start from a general discussion on the modularity of
the differentials $\{\omega_{g,n}\}_{g,n}$ produced from applying topological recursion to
a genus one mirror curve $C$
whose affine part\footnote{Only the affine part of the curve is relevant in topological recursion.} is given by  \eqref{eqnHxy=0} with hyperelliptic structure given by $x$.

We shall only focus on one-parameter subfamilies.
	However, many of the results for the one-parameter subfamilies, such as the
	 structure for the ring in Theorem \ref{thmhighergenusWgn}
	 and the holomorphic anomaly equations in Theorem \ref{thmhae}
	can be easily generalized to topological recursion for the full multi-parameter families. The only difference
	is the lack of a better understanding on the moduli space interpretation of the rest of the parameters (other than the complex structure
	modulus) from the view point of the mirror curve.

\subsection{Expansions of basic ingredients in topological recursion}

\subsubsection{Local coordinates for expansions}
\label{seclocaluniformizer}

We use $[x_1,x_2,x_3]$ to denote a point on the (compactified) mirror curve $C$,
which are the first three homogeneous coordinates of $\bP^{\fp+2}$
in \eqref{eqn:PDelta} -- namely $x=x_1/x_3$ and $y=x_2/x_3$.
For a generic mirror curve, the set $R^{\circ}$ of finite (i.e., in the $x_{3}=1$ patch) ramification points is a subset of the affine mirror curve $C^{\circ}$.

In Section \ref{secallfourexamples}, we have
made the choice of origin for the group law for the mirror curve.
For $\cX=K_{\bP^2}$, the shift $\epsilon$ in uniformization formula has chosen to be zero.
Accordingly, we have
$R^{\circ}=\{u={1\over 2}\,,
{\tau\over 2}\,,
{1+\tau\over 2}\}
$.
For the other three cases $\cX=K_{\bP^1\times \bP^1},  K_{ W\bP[1,1,2]}, \mathbb F_1$, we have
$R^{\circ}=\{u=0\,, {1\over 2}\,,
{\tau\over 2}\,,
{1+\tau\over 2}\}$.
According to Part $2$ of Lemma \ref{lemramificationofhyperellipticgenusonecurves}, the hyperelliptic involution $*$ on the mirror curve is induced by the involution $u\mapsto -u$ on the Jacobian. We also use $*$ to denote the induced actions on functions and differentials.

We need the notion of local uniformizer  for the calculus on the mirror curve $C$.
In what follows, we always use the local uniformizer\footnote{The should not be confused with the K\"ahler parameter discussed earlier.}
\begin{equation}
T=u-u(p)
\end{equation}
near a point $p$ corresponding to $u(p)$ under uniformization.
We shall also identify a point $p\in C$ with its $u$-coordinate which is defined modulo translation by elements in
the lattice
$\mathbb{Z}\oplus \tau \mathbb{Z}$ mentioned in Section \ref{secuniformizationofellipticcurves}.

\subsubsection{The $\log$-differential and Bergmann/Schiffer kernel}
\label{seclogBergmanSchiffer}

The basic ingredients in Eynard-Orantin topological recursion
are the $\log$-differential\footnote{The differential $\lambda$, which involves logarithm, is derived as the dimension reduction of the Calabi-Yau form of the non-compact CY 3-fold \cite{Chiang:1999tz, Aganagic:2000gs, Aganagic:2002} and relates to mirror symmetry. Its rigorous definition uses mixed Hodge structure \cite{Batyrev:1993variations, Stienstra:1997resonant, Konishi:2010local}. In the current genus one case, we understand the logarithm via the formal group of the elliptic curve \cite{Silverman:2009arithmetic}. In the literature, sometimes another version $\lambda=ydx$ is used. While much easier to deal with, $ydx$ is not directly related to toric CY $3$-folds  by mirror symmetry.}
 $\lambda=\log y \cdot dx/x$ and the Bergman kernel $B$.
The differential $\lambda$
depends on the choice of the local coordinates $x,y$ as displayed in \eqref{eqnHxy=0}.\\

Instead of the Bergmann kernel $B$ in \cite{Eynard:2007invariants} (which produces
differentials $\{\omega_{g,n}\}_{g,n}$) we usually work with the Schiffer kernel $S $  (which produces
differentials $\{\widehat \omega_{g,n}\}_{g,n}$).
The Schiffer kernel is independent of the Torelli marking, as defined in Section \ref{sec:remodeling}.

In the genus one case, the Schiffer kernel is given by
\begin{equation}\label{eqngenusoneSchifferkernel}
S(u_{1},u_{2})
=(\wp(u_{1}-u_{2})+\widehat{\eta}_{1})du_{1}\boxtimes du_{2}\,,
\quad
\widehat{\eta}_{1}=2\zeta(2)\widehat{E}_{2}={\pi^{2}\over 3}(E_{2}+{-3\over \pi \mathrm{Im}\tau})\,.
\end{equation}
Here although the quantity $\tau$ depends on the Torelli marking,
the Schiffer kernel $S$ does  not. An advantage, besides being modular, is that it keeps track of part of the combinatorics in topological recursion through the non-holomorphic dependence in $\tau$. This will be used later in the discussion of holomorphic anomaly equations in Section \ref{secholomorphicanomalyequations}.\\

Through this work, we are only interested in the coefficient part of the differential $\omega_{g,n}$
with respect to the trivialization
$du_{1}\boxtimes du_{2}\cdots \boxtimes du_{n}$, constructed from topological recursion.
By abuse of terminology, we say $\omega_{g,n}$ has modular properties (like being Jacobi forms)
if its coefficient has so.
Hence the Schiffer kernel $S$ is regard as an almost-meromorphic Jacobi form according to Definition \ref{dfnquasialmostmeromorphicJacobiforms}.
Similarly, the Bergmann kernel $B$ is quasi-meromorphic Jacobi form.

\subsubsection{Modularity of Taylor coefficients of Jacobi forms at torsion points}
\label{secmodularityattorsion}

The following result proves to be useful in discussing modularity of Taylor coefficients of
meromorphic Jacobi forms \cite{Eichler:1984}.
Suppose $\Phi$ is a meromorphic Jacobi form of weight $m$, then its $k$th Taylor coefficient
at $x_{0}+y_{0} \tau$ is a meromorphic modular form of weight $m+k$
for the modular group consisting of matrices $\gamma\in SL_{2}(\mathbb{Z})$
such that $\gamma (x_{0}+y_{0} \tau)=x_{0}+y_{0} \tau\, \mathrm{mod}\, \mathbb{Z}\oplus \mathbb{Z}\tau $.
See \cite{Dolgachev:1997} for a nice exposition of these facts.

Consider the case $\Phi=\wp$
which is a  meromorphic Jacobi form of  of weight $2$ with level $ SL_{2}(\mathbb{Z})$.
At the $2$-torsion points, the modular group can be taken to be $\Gamma(2)$.
The same statement is true for the meromorphic Jacobi form $\wp'$, and higher derivatives of $\wp$.
In the higher derivative cases, we can alternatively use the algebraic relation $(\wp')^2=4\wp^3-g_{2}\wp-g_{3}$
 satisfied by $\wp$ and $\wp'$ in \eqref{eqnalgebraicrelationwpwp'} and then apply induction.
This when combined with Lemma
Lemma \ref{lemramificationofhyperellipticgenusonecurves} and Lemma
\ref{lemmodularityofvaluesatramificationpoints}
would imply that the differentials produced by topological recursion are quasi- or almost- meromorphic Jacobi forms, as we shall see below.\\

For later use, we recall the values of $\wp$
\begin{eqnarray}\label{eqnperiodsof2ndkind}
e_{1}&:=&\wp({1\over 2})=2\zeta(2) (\theta_{3}^4+\theta_{4}^4)\,,\nonumber\\
e_{2}&:=&\wp({\tau\over 2})=2\zeta(2) (-\theta_{2}^4-\theta_{3}^4)\,,\nonumber\\
e_{3}&:=&\wp({1+\tau\over 2})=2\zeta(2) (\theta_{2}^4-\theta_{4}^4)\,.
\end{eqnarray}
See \cite{Zagier:2008} for the convention of the $\theta$-constants above.
As explained earlier in Section \ref{secmodularityattorsion}, these are modular forms for $\Gamma(2)$ with trivial multiplier systems.
We also denote
\begin{equation}\label{eqnnonholomorphicperiodsof2ndkind}
\widehat{e}_{k}:=e_{k}+\widehat{\eta}_{1}\,,
\quad
k=1,2,3\,,
\quad
\widehat{e}_{0}:=\widehat{\eta}_{1}\,.
\end{equation}
The following Laurent expansion of $\wp$ at $u=0$ is also useful
\begin{equation}\label{eqnLaurentexpansionofwp}
\wp(u)={1\over u^2}+\sum_{k=1}^{\infty} (2k+1) 2\zeta_{2k+2}E_{2k+2}  u^{2k}\,,
\end{equation}
where $\zeta_{2k+2}$ is the $\zeta$-value and $E_{2k+2}$
is the Eisenstein series of weight $2k+2$ with normalized leading term in the Fourier expansion to be $1$.

\subsubsection{Local expansions near the ramification points}

In topological recursion one needs to study residues of quantities around ramification points of $x: C\rightarrow \mathbb{P}^{1}$ which gets identified with the group of $2$-torsion points, according to Lemma  \ref{lemramificationofhyperellipticgenusonecurves}.\\

For later use, we now study $\lambda-\lambda^{*}$ around the ramification points in $R^{\circ}$. Note that vanishing locus of $y$ is away from $R^{\circ}$, hence $\log y$ is single-valued if we fix a branch of logarithm once and for all. We shall choose the principal branch which takes the value $0$ when $y=1$.

We simplify $\lambda-\lambda^{*}$ by making use of the results on uniformization as follows.
We know for an one-parameter subfamily, $x,y$
are rational functions in $\wp(u+\epsilon),\wp'(u+\epsilon)$ for some shift $\epsilon$,
with coefficients lying in the fractional field  $\mathcal{M}(\Gamma)$ of the ring $M(\Gamma)$ of modular forms for some modular group $\Gamma$ depending on the curve family $\mathcal{C}$.
Under the involution $*$ the rational function $x$
is fixed while for $y$ we have
\begin{equation}
y={y+y^{*}\over 2}+{y-y^{*}\over 2}\,,
\quad
y^{*}={y+y^{*}\over 2}-{y-y^{*}\over 2}\,,
\end{equation}
Furthermore since $y\neq 0$ at a ramification point in $R^{\circ}$ where $y-y^{*}=0$,
we know $y+y^{*}$ is not vanishing at a ramification point in $R^{\circ}$.
We then have
\begin{eqnarray}
\lambda-\lambda^{*}&=&
\log  y {dx\over x} -
\log  y^{*}{dx^{*}\over x^{*}}
=\log \left( { {y+y^{*}\over 2} +{y-y^{*}\over 2}\over {y+y^{*}\over 2}-{y-y^{*}\over 2} }\right) {dx\over x}\,.
\end{eqnarray}
At a finite ramification point we also have $x\neq 0, dx=0$, we then define
\begin{eqnarray}\label{eqndifferenceoflambda}
 \Lambda&:=&2 \sum_{k=0}^{\infty} {1\over 2k+1} ({y-y^{*}\over y+y^{*}})^{2k+1}\partial_{u}x {du\over x}\,,
\end{eqnarray}
which is an expression for $(\lambda-\lambda^{*})$ near each ramification point.
The vanishing order of $y-y^{*}$ at the ramification point is $1$ since the curve $C$ is smooth.
According to the results on uniformization $ \Lambda$ is a meromorphic Jacobi form, its weight
is $1$ coming from the $dx/x$ part: the coefficient part has weight zero.

We can further expand the above expression
\eqref{eqndifferenceoflambda}
in terms of the local uniformizing parameter $T=u-u_{r}$, where $u_r$ is the $u$-coordinate of the ramification point $r\in R^{\circ}$.
Then we have
\begin{equation}
\wp(u+\epsilon)=\wp(T+u_{r}+\epsilon)\,.
\end{equation}
When $u_{r}+\epsilon=0$ modulo $\mathbb{Z}\oplus \tau\mathbb{Z}$, the Laurent expansion of
$\wp(u+\epsilon), \wp'(u+\epsilon)$ in the local uniformizer $T$ follow from \eqref{eqnLaurentexpansionofwp}.
Otherwise we have the Taylor expansion
\begin{equation}\label{eqnchangebetweenalgebraicandtranscendental}
\wp(u+\epsilon)=\left(\sum_{k=0}^{\infty}{T^{k}\over k!}\wp^{(k)}(u_{r}+\epsilon)  \right)
\,.
\end{equation}

We can also expand the Schiffer kernel \eqref{eqngenusoneSchifferkernel}
around a ramification point $r\in R^{\circ}$ with respect to one of its arguments.
The expansion in $T=u-u_r$ is
\begin{equation}\label{eqnexpansionforSchifferkernel}
S(u,v)=(\wp(T+u_{r}-v)+\widehat{\eta_{1}})dT\boxtimes dv
=
\left(\sum_{k=0}^{\infty}{T^{k}\over k!}(\wp^{(k)}(u_{r}-v)+\widehat{\eta_{1}}^{(k)}) \right)dT\boxtimes dv\,.
\end{equation}
One has $\widehat{\eta_{1}}^{(k)}=0$ unless $k=0$ in which case
$\widehat{\eta_{1}}^{(k)}=\widehat{\eta_{1}}$.

\subsection{Modular properties of $\{\omega_{g,n}\}_{g,n}$ and ring structure}
The differentials  $\widehat \omega_{g,I+1}, 2g-2+I+1>0$
are constructed recursively in \cite{Eynard:2007invariants} through
\begin{equation}
\begin{split}
\label{eqnWgI+1recursion}
\widehat \omega_{g,I+1} (u_{0}, u_{I})
&=\sum_{r\in R^{\circ}} Res_{v=r} \,
K(u_{0},v)\cdot \\
&\left[ \widehat \omega_{g-1,I+2} (v,v^{*}, u_{I})+
\sum'_{\substack{g_{1}, g_{2}\\ g=g_{1}+g_{2}}}\sum'_{\substack{J, K\\ I=J\sqcup K}} \widehat \omega_{g_{1}, J+1} (v,u_{J})\cdot \widehat \omega_{g_{2}, K+1} (v^{*},u_{K})
\right]
\,.
\end{split}
\end{equation}
Here the notation $\sum'$ means that the range in the sum
is such that the construction is strictly recursive.
We have also used the notations $I,J,K$ to denote the sets of indices and the corresponding cardinalities.
The quantity $ \hat{F}_{g}=\widehat{\omega}_{g,0}, g\geq 2$, called genus $g$ free energy, is defined in \cite{Eynard:2007invariants} through
 \begin{equation}\label{eqnFgrecursion}
 \hat{F}_{g}:={1\over (2-2g)}\sum_{r\in R^{\circ}}Res_{r} (d^{-1}\lambda \cdot \widehat{\omega}_{g,1})\,.
 \end{equation}

In the above constructions \eqref{eqnWgI+1recursion} and \eqref{eqnFgrecursion}, the quantity $K$
is the recursion kernel  \cite{Eynard:2007invariants} defined by
\begin{equation}\label{eqnrecursionkernel}
K(u,v)= {d^{-1}S \over\lambda(v)-\lambda(v^{*})}= {d^{-1}S \over\lambda(v)-\lambda^{*}(v)},
\end{equation}
where
\begin{equation}\label{eqnnumeratorofrecursionkernel}
d^{-1}S:={1\over 2} \int_{2u_{r}+v^*}^v S(u,\bullet)\,.
\end{equation}
Again we understand the logarithm in the denominator of $K$ from the formal group point of view \cite{Silverman:2009arithmetic} as before.
This means that both \eqref{eqnrecursionkernel} and \eqref{eqnnumeratorofrecursionkernel}
are expressed in terms of Laurent series in the local uniformization $T=v-u_{r}$ near a ramification point $u_{r}\in R^{\circ}$.
The shift $2u_{r}$ in the lower bound $2u_{r}+v^{*}=2u_{r}-v$ in \eqref{eqnnumeratorofrecursionkernel} is needed
such that $d^{-1}S$ vanishes at the ramification point $v=u_{r}$, i.e., $T=0$.
The quantity $d^{-1}\lambda$ in \eqref{eqnFgrecursion} is defined in a similar way
such that $2(d^{-1}\lambda)' (v)=\lambda(v)-\lambda^{*}(v) $.\\

The differentials $\widehat \omega_{g,n}, 2g-2+n\leq  0$, that is
$(g,n)=(0,1),(0,2),(1,0)$,
 are dealt with separately below.
For the $(g,n)=(0,1)$ case, the differential $\widehat \omega_{0,1}$ is defined in \cite{Eynard:2007invariants} to be zero.

\subsubsection{Disk potential}
The mirror counterpart of the superpotential $W$ is a primitive  \cite{Aganagic:2000gs, Aganagic:2002} of the differential $\lambda$,
integrated along a certain chain on the curve
$C$.
By definition, its derivative $\partial_{x}W$, called the disk potential, satisfies
\begin{equation}
{\partial W\over \partial x }=\lambda=\log y\cdot  {1\over x}\,.
\end{equation}
We arrive at the following result.
\begin{prop}\label{propdiskpotentialJacobi}
The disk potential $\partial_{x}W$
is the logarithm of a meromorphic Jacobi form whose modular group $\Gamma$
	is determined by the one-parameter subfamily of the mirror curve family $\mathcal{C}$.
\end{prop}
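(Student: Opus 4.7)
The plan is to reduce the proposition to the assertion that the toric character $y$, viewed as a function of $(u,\tau)$ under the uniformization of the chosen one-parameter subfamily, is itself a meromorphic Jacobi form of index $0$ for a suitable modular group $\Gamma$. Granted this, the disk potential $\partial_x W=(\log y)/x$ is literally a logarithm of a meromorphic Jacobi form, with the extra factor $1/x$ being another Jacobi form (so the $\log$ of a single Jacobi form governs its transcendental part, as in the explicit formula displayed in Example \ref{diskpotentialex}).

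First I would invoke Assumption \ref{lemwpuniformizationofoneparameterfamiliesgenusonecurves}, whose validity in each of our four cases is verified by direct computation in Section \ref{secallfourexamples}. This writes both $x$ and $y$ as rational functions in $\wp(u+\epsilon,\tau)$ and $\wp'(u+\epsilon,\tau)$, with coefficients lying in the fractional field $\mathcal{M}(\Gamma)$ of the ring of modular forms for a group $\Gamma<SL_{2}(\mathbb{Z})$ depending on the subfamily. Since $\wp$ and $\wp'$ are meromorphic Jacobi forms of index $0$ and weights $2$ and $3$ respectively for $SL_{2}(\mathbb{Z})$, and since the field $\mathcal{J}$ introduced in Section \ref{sec:jacobi} is closed under multiplication by elements of $\mathcal{M}(\Gamma)$, both $x$ and $y$ land in the field $\mathcal{J}\otimes \mathcal{M}(\Gamma)$ of meromorphic Jacobi forms of index $0$ for $\Gamma$.

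Next I would dispose of the auxiliary shift $\epsilon$. By the ramification-point convention fixed in Section \ref{seclocaluniformizer}, $\epsilon$ is either $0$ (for $K_{\mathbb{P}^{2}}$) or a $2$-torsion point of the elliptic curve (in the other three cases). The principle recalled in Section \ref{secmodularityattorsion}, namely that translation of a meromorphic Jacobi form by a torsion point produces a meromorphic Jacobi form for the stabilizer of that torsion point modulo the lattice, shows that $\wp(u+\epsilon,\tau)$ and $\wp'(u+\epsilon,\tau)$ are themselves meromorphic Jacobi forms once one passes from $SL_{2}(\mathbb{Z})$ to any subgroup contained in $\Gamma(2)$. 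Replacing $\Gamma$ by the finite-index refinement $\Gamma\cap \Gamma(2)$ therefore absorbs the translation. The same cosmetic refinement absorbs the multiplier-system subtleties for the factors $\kappa,\phi,\theta_{j},\eta(N\tau),\ldots$ arising in the uniformizations of Section \ref{secallfourexamples}, exactly as already done there.

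The main obstacle, if any, is purely bookkeeping: for each of the four cases, one must identify the precise finite-index subgroup $\Gamma<SL_{2}(\mathbb{Z})$ for which $y$ is a meromorphic Jacobi form, and to ensure that all multiplier systems become trivial on it. Since this bookkeeping has already been carried out in Section \ref{secallfourexamples} (and an explicit $\Gamma$ is exhibited in each case), the proof reduces to a short observation combining Assumption \ref{lemwpuniformizationofoneparameterfamiliesgenusonecurves}, the definition of meromorphic Jacobi form from Section \ref{sec:jacobi}, and the translation principle from Section \ref{secmodularityattorsion}.
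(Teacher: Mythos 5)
Your proposal is correct and follows essentially the same route as the paper: the paper states Proposition \ref{propdiskpotentialJacobi} as an immediate consequence of the formula $\partial_{x}W=\lambda=\log y\cdot(1/x)$ together with Assumption \ref{lemwpuniformizationofoneparameterfamiliesgenusonecurves} and the case-by-case uniformizations of Section \ref{secallfourexamples}, which express $x$ and $y$ as rational functions of $\wp(u+\epsilon),\wp'(u+\epsilon)$ with coefficients in $\mathcal{M}(\Gamma)$, and offers no further argument. Your write-up simply makes explicit the details the paper leaves implicit (that $\wp,\wp'$ are index-zero meromorphic Jacobi forms, that the shift $\epsilon$ by a $2$-torsion point is absorbed by passing to $\Gamma\cap\Gamma(2)$, and that multiplier systems are trivialized on a suitable finite-index subgroup), all of which is consistent with how the paper handles the same points elsewhere.
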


\subsubsection{Annulus potential}

The differential $\omega_{0,2}$
is mirror to the annulus amplitude. It is defined to be the Bergmann kernel $B$
and is the holomorphic limit of the Schiffer kernel $\widehat{\omega}_{0,2}:=S$. It is a quasi-meromorphic Jacobi form.

The quantity $d^{-1}S$ is a "formal" almost-meromorphic Jacobi form of "formal" weight $1$
in the sense that its derivative (in $v$) is an almost-meromorphic Jacobi form of weight $2$.
The recursion kernel $K$, as the quotient of
$d^{-1}S$ by the Jacobi form in \eqref{eqndifferenceoflambda} is also regarded as a "formal"  almost-meromorphic Jacobi form.

\begin{prop}\label{lemannuluspotentialJacobi}
The annulus  amplitude $ \omega_{0,2}=B$
is a weight $2$, index $0$, level $\Gamma(1)$, quasi-meromorphic Jacobi form. It is symmetric in its arguments.
The recursion kernel $K=d^{-1}\widehat \omega_{0,2}/(\lambda-\lambda^{*})$ is a formal almost-meromorphic Jacobi form of formal weight $0$.
\end{prop}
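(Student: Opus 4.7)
\medskip

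\noindent\emph{Proof proposal.} The plan is to exploit the explicit form \eqref{eqngenusoneSchifferkernel} of the Schiffer kernel on a genus one curve and read off the claimed modular properties directly. Since $\omega_{0,2}=B$ is by construction the holomorphic limit of the Schiffer kernel $\widehat\omega_{0,2}=S$ (i.e.\ replacing $\widehat{E_2}$ by $E_2$), the formula \eqref{eqngenusoneSchifferkernel} gives
\begin{equation*}
\omega_{0,2}(u_1,u_2)=\Bigl(\wp(u_1-u_2)+\tfrac{\pi^2}{3}E_2\Bigr)\,du_1\boxtimes du_2.
\end{equation*}
The Weierstrass $\wp$-function $\wp(z,\tau)$ is a meromorphic Jacobi form of weight $2$ and index $0$ for $SL_2(\mathbb Z)$ (the Example in the introduction), and remains so after the substitution $z=u_1-u_2$. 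The Eisenstein series $E_2$ is quasi-modular of weight $2$ for $SL_2(\mathbb Z)$. Adding the two places the coefficient of $du_1\boxtimes du_2$ in the ring $\widetilde{\mathcal J}$ of Definition \ref{dfnquasialmostmeromorphicJacobiforms} at bidegree (weight, index)$=(2,0)$, proving the first assertion. Symmetry $\omega_{0,2}(u_1,u_2)=\omega_{0,2}(u_2,u_1)$ is immediate from the evenness of $\wp$.

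For the recursion kernel $K(u,v)=d^{-1}S/(\lambda(v)-\lambda^{*}(v))$ I would first handle the numerator. By construction $d_v\bigl(d^{-1}S\bigr)=S$, and $d^{-1}S$ is defined by the integral in \eqref{eqnnumeratorofrecursionkernel} with the normalization ensuring vanishing at $v=u_r$. Using the expansion \eqref{eqnexpansionforSchifferkernel} for $S$ around a ramification point $r\in R^{\circ}$, term-by-term integration in the local uniformizer $T=v-u_r$ produces a Laurent series in $T$ whose every coefficient is an almost-meromorphic modular form of definite weight. Assembling these gives a formal almost-meromorphic Jacobi primitive of formal weight $1$ (one less than the weight $2$ of $S$, since one $dv$ has been integrated out). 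For the denominator, as observed immediately after \eqref{eqndifferenceoflambda}, the difference $\lambda-\lambda^{*}=\Lambda$ is a meromorphic Jacobi form whose weight $1$ comes entirely from the $dx/x$ factor while the coefficient part has weight zero. Dividing subtracts the weights, so $K$ is a formal almost-meromorphic Jacobi form of formal weight $1-1=0$, as asserted.

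The only subtle point, and the place where one must be careful, is giving a precise meaning to the adjective \emph{formal}: the operator $d^{-1}$ is not an intrinsic operation on the ring of Jacobi forms and in general need not yield an honest Jacobi form. The appropriate interpretation—consistent with its use throughout topological recursion—is the local Laurent expansion near each ramification point $r\in R^{\circ}$ in the uniformizer $T=v-u_r$, in which each coefficient is a genuine (almost-)meromorphic modular form of a definite weight, and the formal weight of the whole series is declared by the rule that applying $d$ raises it by one and recovers the weight of $S$. This convention is compatible with the grading inherited from the ring $\widehat{\mathcal J}(\Gamma)$, and with this understanding the proof is complete.
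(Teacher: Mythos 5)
Your proposal is correct and follows essentially the same route as the paper, which proves this proposition by the discussion immediately preceding it: reading off the explicit genus-one formula \eqref{eqngenusoneSchifferkernel} for the Schiffer kernel, taking the holomorphic limit $\widehat{E_2}\mapsto E_2$ to identify $\omega_{0,2}$ as $\wp(u_1-u_2)+\tfrac{\pi^2}{3}E_2$ times $du_1\boxtimes du_2$, and then doing the same weight bookkeeping for $d^{-1}S$ (formal weight $1$) and $\Lambda$ (weight $1$ from $dx/x$) to conclude $K$ has formal weight $0$. Your closing paragraph making precise the meaning of ``formal'' via Laurent expansions in $T=v-u_r$ is a welcome clarification that matches the convention the paper uses later in the proof of Theorem \ref{thmhighergenusWgn}.
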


\subsubsection{Higher genus modularity}

We will use topological recursion to prove the modularity of $\{\widehat \omega_{g,n}\}_{g,n}$ for higher $(g,n)$.

\subsubsection*{Genus one closed case}

The quantity $\widehat \omega_{1,0}=\hat{F}_{1}$, called genus one free energy, involves
the Bergmann $\tau$-function $\tau_{B}$ \cite{Eynard:2007invariants}.
In the current genus one case,
the Bergman $\tau$-function, as an analytic invariant, is given by
\cite{Kokotov:2003bergmann, Kokotov:2004tau, Kokotov:2004tau2}
\begin{equation}
\tau_{B}=\eta^{2}(\tau)\,.
\end{equation}
The genus one free energy
$\hat F_{1}$ is then defined to be
\begin{equation}\label{eqnnonholomorphicgenusonefreeenergy}
\hat F_{1}=-{1\over 2}\ln \tau_{B}-{1\over 12}\ln \prod_{r\in R^{\circ}} {dy\over d(x-x(r))^{1\over 2}}|_{r}-\ln \det Y\,,
\quad
Y=-\pi /\mathrm{Im}\tau\,.
\end{equation}
The second term can be computed to be the logarithm of a modular function.
Taking the holomorphic limit (setting $\mathrm{Im} \tau \to \infty$), we define $dF_{1}
:=\lim_{\mathrm{Im}\tau\to \infty} d\hat F_{1}$. It is shown that in \cite[Theorem 7.9]{FLZ16} that $dF_1^\cX=dF_{1}$. We therefore arrive at the following result.
\begin{thm}
Up to addition by a constant, the genus one closed GW potential $F^\cX_1(\tau)$ is the logarithm of a meromorphic modular form
whose modular group $\Gamma$
	is determined by the one-parameter subfamily of the mirror curve family $\mathcal{C}$.
\end{thm}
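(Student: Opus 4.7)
The plan is to start from the explicit formula \eqref{eqnnonholomorphicgenusonefreeenergy} for the almost-holomorphic genus one free energy $\hat F_1$, show after $\tau$-differentiation that each of the three summands is the logarithmic derivative of an (almost-)meromorphic modular form, and then apply the Kaneko--Zagier constant-term map. Integrating the resulting identity and invoking $dF_1^\cX = dF_1 := \lim_{\mathrm{Im}\tau\to\infty}d\hat F_1$ from \cite{FLZ16}, one obtains $F_1^\cX$ as $\log$ of a meromorphic modular form up to a constant of integration, which is exactly the claim.

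The first summand is $-\tfrac{1}{2}\ln\tau_B = -\ln\eta(\tau)$, already the logarithm of a holomorphic modular form of weight $1/2$; its Dedekind multiplier becomes trivial after intersecting the modular group $\Gamma$ of the one-parameter subfamily with a suitable congruence subgroup.

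For the middle summand, fix $r\in R^{\circ}$. By Lemma \ref{lemramificationofhyperellipticgenusonecurves} there is a $2$-torsion point $v_r = u_r + \epsilon$ on the Jacobian at which $\wp'(v_r)=0$. A local expansion of the relation $x = x(\wp,\wp')$ at $r$ gives
\begin{equation*}
\frac{dy}{d(x-x(r))^{1/2}}\bigg|_{r}=\frac{y'(u_r)}{\sqrt{x''(u_r)/2}}\,,
\end{equation*}
where the primes denote $\partial_u$. Since $x,y$ are rational functions of $\wp,\wp'$ with coefficients in $\mathcal{M}(\Gamma)$ by Section \ref{secallfourexamples}, and since $\wp'(v_r)=0$ and $\wp''(v_r) = 6\wp(v_r)^2-g_2/2$ by the algebraic relation \eqref{eqnalgebraicrelationwpwp'}, both $y'(u_r)$ and $x''(u_r)$ are polynomial expressions in $\wp(v_r)$, $g_2$, $g_3$ and the uniformization coefficients. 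By Lemma \ref{lemmodularityofvaluesatramificationpoints} together with Section \ref{secmodularityattorsion}, these are meromorphic modular forms for $\Gamma(2)\cap\Gamma$. Hence $\prod_{r\in R^{\circ}}\bigl(dy/d(x-x(r))^{1/2}\bigr)^{2}|_r$ is a meromorphic modular form, and its $(-1/24)$-th power (chosen consistently by passing to a further finite-index subgroup) is the contribution of the middle summand.

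The third summand, $-\ln\det Y = -\ln(-\pi/\mathrm{Im}\tau)$, is the only non-holomorphic piece of $\hat F_1$. Its $\tau$-derivative $(2i\,\mathrm{Im}\tau)^{-1}$ lies in the kernel of the constant-term map of \cite{Kaneko:1995}, so it drops out of $dF_1$. Combining the three summands, $dF_1$ equals the logarithmic differential of a meromorphic modular form for some finite-index subgroup $\Gamma'<\Gamma$; integrating yields $F_1^\cX = \ln(\text{modular form}) + \text{const}$, as stated. The main technical obstacle is the bookkeeping of the multiplier system of $\eta$ and the branch choices introduced by the square roots $\sqrt{x-x(r)}$ and $\sqrt{x''(u_r)}$, which are handled uniformly by passing to a sufficiently deep congruence subgroup, exactly as was done case-by-case in Section \ref{secallfourexamples}.
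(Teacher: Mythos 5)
Your proposal is correct and follows essentially the same route as the paper: start from the Eynard--Orantin formula \eqref{eqnnonholomorphicgenusonefreeenergy} with $\tau_B=\eta^2(\tau)$, show the middle term $\prod_{r}dy/d(x-x(r))^{1/2}|_r$ is modular via the uniformization and the $2$-torsion identification of $R^{\circ}$ (exactly the computation the paper carries out for $K_{\mathbb{P}^2}$ in Example \ref{exKP2continued}), discard the $\ln\det Y$ term in the holomorphic limit, and invoke $dF_1^{\cX}=dF_1$ from \cite{FLZ16}. You supply more detail than the paper's terse argument (which simply asserts the second term "can be computed to be the logarithm of a modular function"), and the only nitpick is that $y'(u_r)$ and $x''(u_r)$ are rational, not polynomial, in $\wp(v_r)$ and the modular coefficients --- which is harmless since $\mathcal{M}(\Gamma(2)\cap\Gamma)$ is a field.
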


\subsubsection*{Higher genera}

Note that in higher genus recursion for $\{\widehat{\omega}_{g,n}\}_{g,n}$,
the disk potential $W$ and genus one free energy $\widehat{F}_{1}$ do not enter, hence
no logarithms of almost-meromorphic Jacobi forms will appear.

We define the total weight of
$\widehat{\omega}_{g,n}(u_{1}, \cdots  , u_{n})$
to be the total wight of its coefficient
with respect to the trivialization $du_{1}\boxtimes \cdots du_{n}$, which
will be proved to be a multi-variable almost-meromorphic Jacobi form as defined in Definition \ref{dfnmultiJacobiform} in Section
\ref{sec:jacobi}.

\begin{thm}
\label{thmhighergenusWgn}
The following statements hold for $\widehat \omega_{g,n}$ with $2g-2+n> 0$.

\begin{enumerate}
\item
The differential $\widehat \omega_{g,n}(u_1,\cdots, u_n), n\neq 0$ is symmetric in its arguments.
In each argument, it only has poles at the ramification points in $R^{\circ}$.
At any of the ramification points, the order of pole in any argument is at most $6g+2n-4$.
Furthermore, the sum of orders of poles over all arguments
in each term in $\widehat \omega_{g,n}(u_1,\cdots, u_n)$ is at most $6g+4n-6$.

\item The differential $\widehat \omega_{g,n}(u_1,\cdots, u_n),n\neq 0$ is a differential polynomial in $S(u_k-u_r),$ $k=1,2,\cdots n\,, r\in R$.
The coefficients of $\widehat\omega_{g,n}$ regarded as a differential polynomial in $S(u_{k}-u_{r}), k=1,2,\cdots n, r\in R$ are elements in the ring
\begin{equation}
\label{eqncoefficientring}
\widehat{\mathcal{K}}:=\mathcal{M}(\Gamma(2)\cap\Gamma) \otimes \mathbb{C}[\widehat{e}_{1}  , \widehat{e}_{2} , \widehat{e}_{3}  ,\widehat{ \eta}_{1} ]=\mathcal M(\Gamma(2)\cap\Gamma)\otimes \bC[\hat E_2]\,.
\end{equation}
In particular, the coefficient of $\widehat \omega_{g,n}(u_1,\cdots, u_n),n\neq 0$ under the trivialization $du_1\boxtimes \dots \boxtimes du_n$
is an almost-meromorphic multi-Jacobi form for the modular group $\Gamma(2)\cap \Gamma$, where the modular group $\Gamma$
	is determined by the one-parameter subfamily of the mirror curve family $\mathcal{C}$.
Its total weight is $n$.
\item The quantity $\hat{F}_{g} ,g\geq 2$ is an almost-meromorpic modular form of weight zero, lying in the ring $\widehat{\mathcal{K}}$ given in  \eqref{eqncoefficientring}.
\end{enumerate}
\end{thm}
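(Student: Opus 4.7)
The plan is to prove all three parts of the theorem simultaneously by induction on the negative Euler characteristic $2g - 2 + n$, using the Eynard-Orantin recursion \eqref{eqnWgI+1recursion}. The base cases are $(g,n) = (0,3)$ and $(g,n) = (1,1)$, where the recursion reduces to a direct residue computation involving only the Schiffer kernel and the recursion kernel $K$; the membership in $\widehat{\mathcal K}$ is visible from the explicit formula displayed in Example \ref{w03ex}, using that the ramification points are $2$-torsion (Lemma \ref{lemramificationofhyperellipticgenusonecurves}) and that the values of $x, y$ at those points are modular forms in $\mathcal M(\Gamma(2)\cap \Gamma)$ (Lemma \ref{lemmodularityofvaluesatramificationpoints}).

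The heart of the argument is the local analysis at each ramification point $r \in R^{\circ}$. Writing $T = v - u_r$, I would expand the Schiffer kernel $S(u_0, v)$ as in \eqref{eqnexpansionforSchifferkernel}, whose Taylor coefficients in $T$ are $\wp^{(k)}(u_r - u_0) + \widehat\eta_1 \cdot \delta_{k,0}$, quasi-meromorphic Jacobi forms in $u_0$ with scalar coefficients in $\mathcal M(\Gamma(2)\cap \Gamma)$ by Section \ref{secmodularityattorsion}. The anti-derivative $d^{-1}S(u_0, v)$ has the same structure and vanishes at $T = 0$ by construction \eqref{eqnnumeratorofrecursionkernel}. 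For the denominator $\lambda - \lambda^{*}$, one uses \eqref{eqndifferenceoflambda} together with \eqref{eqnchangebetweenalgebraicandtranscendental} to Taylor-expand $\wp(u + \epsilon)$ and $\wp'(u + \epsilon)$ at $u_r$; smoothness of the mirror curve at the simple ramification point $u_r$ guarantees an invertible leading term, and all Taylor coefficients lie in $\mathcal M(\Gamma(2)\cap \Gamma)$ by Lemma \ref{lemmodularityofvaluesatramificationpoints}, so the inverse Laurent series does as well. Consequently the recursion kernel $K(u_0, v)$, viewed as a Laurent series in $T$, has each coefficient a polynomial in $\wp^{(k)}(u_0 - u_r)$ and $\widehat \eta_1$ with scalars in $\mathcal M(\Gamma(2) \cap \Gamma)$, i.e.\ an element of $\widehat{\mathcal K}$.

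For the inductive step, I would assume the theorem for all smaller $2g'-2+n'$. By induction, each factor $\widehat\omega_{g',n'}(v, u_J)$ and $\widehat\omega_{g-1,n+1}(v, v^{*}, u_I)$ on the right side of \eqref{eqnWgI+1recursion}, when expanded at $v = u_r$, is a Laurent series in $T$ whose coefficients are polynomials in the Schiffer kernels $S(u_j - u_r)$ with scalars in $\widehat{\mathcal K}$; the substitution $v^{*} = 2u_r - v$ behaves in the same way because the hyperelliptic involution acts as $u \mapsto -u$ under the group law centered at $u_r$ (Lemma \ref{lemramificationofhyperellipticgenusonecurves}). Multiplying these Laurent series with that of $K(u_0, v)$ and extracting the $T^{-1}$ coefficient is a $\mathbb C$-linear coefficient-extraction, so it preserves both the ring $\widehat{\mathcal K}$ of scalars and the polynomial structure in $S(u_j - u_r)$ for the free arguments. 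Summing over $r \in R^{\circ}$ produces $\widehat\omega_{g,n}$ in the desired form; the Eynard-Orantin theorem supplies symmetry, and a weight tally (the Schiffer kernel has total weight $2$, and each recursion step contributes weight $1$ to the new argument $u_0$) yields the claimed total weight $n$.

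The pole-order bounds and part (3) follow from the same analysis by more careful bookkeeping. Each Taylor expansion of $S(u_j, v)$ in $T$ contributes a pole in $u_j$ of raised order; a simple ramification forces $\lambda - \lambda^{*}$ to vanish to a definite order in $T$, contributing a bounded depth to the kernel, and a tally through the recursion yields the bounds $6g + 2n - 4$ in each argument and $6g + 4n - 6$ in total. For $\widehat F_g$ with $g \geq 2$, I would apply \eqref{eqnFgrecursion}: $d^{-1}\lambda$ has the same local structure as $d^{-1}S$ at $u_r$, and multiplying by $\widehat\omega_{g,1}$ (known from part (2)) and summing residues lands in $\widehat{\mathcal K}$ with weight $0$. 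The main obstacle I anticipate is the precise pole-order and weight bookkeeping through the induction, rather than any conceptual issue: the modular content is dictated rigidly by the uniformizations of Section \ref{secallfourexamples} combined with the $2$-torsion characterization of ramification points, which force every local residue to land in $\widehat{\mathcal K}$; the delicacy lies in verifying that the inverse $1/(\lambda - \lambda^{*})$ combined with the nested Schiffer factors produces exactly the stated pole depths, and that sums over the distinct ramification points $r \in R^{\circ}$ do not escape $\mathcal M(\Gamma(2)\cap \Gamma)$.
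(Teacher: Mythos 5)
Your proposal is correct and follows essentially the same route as the paper's proof: induction on the Eynard--Orantin recursion, local Laurent expansion in $T=v-u_r$ at the $2$-torsion ramification points, modularity of the expansion coefficients of $S$, $d^{-1}S$ and $1/(\lambda-\lambda^{*})$ via Lemma \ref{lemramificationofhyperellipticgenusonecurves} and Lemma \ref{lemmodularityofvaluesatramificationpoints}, residue extraction preserving the ring $\widehat{\mathcal{K}}$, a weight tally treating $K$ as weight zero, and the recursive pole-order inequalities with the initial values $N_{0,1},N_{0,2},N_{1,0}$. The only place the paper is more explicit than you are is in writing down the recursive inequality $N_{g,I}+2\leq \max\{(N_{g_1,J}+2)+(N_{g_2,K}+2)\}$ (and its analogue for the total order) to derive the bounds $6g+2n-4$ and $6g+4n-6$, which is exactly the bookkeeping you flag as remaining.
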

\begin{proof}
\begin{enumerate}
\item
The proofs of the first two statements follow by induction basing on the recursion formula  \eqref{eqnWgI+1recursion}, as in \cite{Eynard:2007invariants}.

For the third statement, denote by $N_{g,I}$ the maximum of the order of pole among all arguments and all ramification points in
$\widehat{\omega}_{g,I+1}$ for any $g,I$ not necessarily satisfying the condition $2g-2+(I+1)>0$.
By induction it is easy to show that
\begin{equation}\label{eqnrecursionoforderofpole}
N_{g,I}+2\leq \max_{g_{1},g_{2}, J, K}\{(N_{g_{1}, J} +2)+(N_{g_{2}, K} +2)\} \,,
\end{equation}
where the maximum is taken over all possible partitions of $g$ and $I$.
Direct computations for the first few $(g,n)$'s show that
$N_{0,1}=0, N_{0,2}=2, N_{1,0}=4$.
The estimate \eqref{eqnrecursionoforderofpole} and the initial values imply that
$N_{g,I}\leq 6g+2I-2$ when $2g-2+(I+1)>0$.

For the last statement,
 denote similarly by $\widetilde{N}_{g,I}$ the maximum of the sum of orders of pole over all arguments
in $\widehat{\omega}_{g,I+1}$, for any $g,I$ not necessarily satisfying the condition $2g-2+(I+1)>0$.
 Again by induction we see that
 \begin{equation}\label{eqnrecursionofsumofordersofpole}
 \widetilde{N}_{g,I}+2\leq \max_{g_{1},g_{2}, J, K}\{(\widetilde{N}_{g_{1}, J} +2)+(\widetilde{N}_{g_{2}, K} +2)\} \,.
 \end{equation}
 Direct computation shows that $\widetilde{N}_{0,1}=2, \widetilde{N}_{0,2}=6, \widetilde{N}_{1,0}=4$. The estimate \eqref{eqnrecursionofsumofordersofpole} and the initial values imply that
 $\widetilde{N}_{g,I}\leq 6g+4I-2$ when $2g-2+(I+1)>0$.

\item
We again prove by induction.
Near the ramification point $u_{r}$, we choose the local parameter $T=v-u_{r}$ in order to evaluate the residues.

We first consider the genus zero case. The initial few cases can be computed directly for which the statement holds.
Assume the statement is true for $\omega_{0,n}$ with $n\leq |I|$.
For $\omega_{0,I+1}$,
we divide the terms in the recursive construction  \eqref{eqnWgI+1recursion}
of $\omega_{0,I+1}$ into two cases: those with $|J|,|K|>1$, and those with one of them equal to $1$.
For the first case, from the recursion, the $v$-dependent terms in the term
\begin{equation*}
\omega_{0,J+1}(v,u_{J}) \omega_{0,K+1}(v^{*}, u_{K})
\end{equation*}
with $|J|,|K|>1$ (and hence $|I|>3$),
are differential polynomials in $S(T+\delta_{r})$ where $\delta_{r}\in R^{\circ}\cup \{0\}$, with coefficients lying in $\widehat{\mathcal{K}}$.
Pick any term among all possible ramification points and all partitions
in the sum for the recursion.
From \eqref{eqnLaurentexpansionofwp} and \eqref{eqnexpansionforSchifferkernel} we see that $\omega_{0,J+1}(v,u_{J}) \omega_{0,K+1}(v^{*}, u_{K})$ is an element in
\begin{equation}
 \widehat{\mathcal{K}}[E_{2k+2}, k\geq 1\,, S^{(m\geq 0)} (\delta), \delta\neq 0](\!(  T )\!)
\otimes \mathbb{C}[S^{(m\geq 0)} (u_{i}-u_{r}) ,i\in I=J\cup K]\,.
\end{equation}
We introduce the notation $[-]_{n}$
for the degree $n$ Laurent coefficient at the corresponding point.
We also denote the $m$th derivative by the superscript $(m)$.
Then the ring above is
\begin{equation}
  \widehat{\mathcal{K}}\left[[S]_{m\in \mathbb{Z}} (\delta), \delta\in R^{\circ}\cup \{0\}\right](\!(  T )\!)  \otimes \mathbb{C}[S^{(m\geq 0)} (u_{i}-u_{r}) ,i\in I=J\cup K ] \,.
\end{equation}
For the second case where one of the cardinalities $|J|, |K|$, say $|J|$, is $1$, the ring is changed to
\begin{equation}
  \widehat{\mathcal{K}}\left[[S]_{m\in \mathbb{Z}} (\delta), \delta\in R^{\circ}\cup \{0\},  S^{(m\geq 0)} (u_{r}-u_{J}) \right](\!(  T )\!) \otimes \mathbb{C}[S^{(m\geq 0)} (u_{k}-u_{r}) ,k\in K ] \,.
\end{equation}

We also have from \eqref{eqnnumeratorofrecursionkernel} that
\begin{equation}
d^{-1}S \in \mathbb{C}\left[ S^{(m\geq 0)}(u_{r}-u_{0})\right]  [\![  T ]\!] \,.
\end{equation}
Applying chain rule to \eqref{eqndifferenceoflambda}, we obtain
\begin{eqnarray}\label{eqnrefinedringstructure}
&&\Lambda =\sum_{m\geq 2}[\Lambda]_{m}T^{m}\nonumber \\
&&    \in \mathbb{C}\left[   {1\over x}\rvert_{u_{r}}, {1\over y+y^{*}}\rvert_{u_{r}}, x^{(m\geq 1)}\rvert_{u_{r}}, (y-y^{*})^{(m\geq 1)}\rvert_{u_{r}},
  (y+y^{*})^{(m\geq 0)}\rvert_{u_{r}}
\right] T^{2}  [\![  T ]\!] \,.
\end{eqnarray}
Assumption  \ref{lemwpuniformizationofoneparameterfamiliesgenusonecurves} (which is true for our examples) for uniformization shows that (recall the expression for
$y^{*}$ from \eqref{eqnshyperellipticinvolutionalgebraic}),
\begin{equation}
x,y,y^{*}=-y-2h(x)\in \mathcal{M}(\Gamma)\otimes \mathbb{C}(\wp(u+\epsilon),\wp'(u+\epsilon))\,.
\end{equation}
 Lemma
\ref{lemmodularityofvaluesatramificationpoints} shows that
$x|_{u_{r}},y|_{u_{r}}$ and hence
\begin{equation}\label{eqnmodularityofvaluesatramification}
\wp(u_{r}+\epsilon), \wp'(u_{r}+\epsilon)\in \mathcal{M}(\Gamma(2)\cap \Gamma)\,,
\end{equation}
as the map from $(x,y)$ to $(\wp(u+\epsilon), \wp'(u+\epsilon))$
is a bi-regular map with coefficients being elements in $\mathcal{M}(\Gamma)$ from uniformization.
From the algebraic relation \eqref{eqnalgebraicrelationwpwp'} between $\wp,\wp'$, we see that
\begin{equation}
\wp^{(m\geq 0)}(u_{r}+\epsilon)\in  \mathcal{M}(\Gamma(2)\cap \Gamma)\,.
\end{equation}
Combing the above results we obtain
$\Lambda  \in \mathcal{M}(\Gamma(2)\cap \Gamma)  T^{2} [\![  T ]\!] $ and hence
\begin{equation}\label{eqnLambdaexpansionismodular}
{1\over \Lambda}\in  \mathcal{M}(\Gamma(2)\cap \Gamma)\, T^{-2} [\![  T ]\!] \,.
\end{equation}
Due to the order of pole behavior in Part 1, all of the formal Laurent and power series above can be replaced by their finite
truncations depending on $g,n$.
Multiplying the expansions of the above ingredients and collecting the degree $-1$ coefficients,
we see that $\omega_{0,I+1}$ is a differential polynomial in $S(u_{i}-u_{r}), i\in I\cup \{0\}, r\in R^{\circ}$,
and the coefficients are elements in the ring
\begin{equation}\label{eqnrefinedcoefficientring}
  \widehat{\mathcal{K}}\left[[S]_{m\in \mathbb{Z}} (\delta), \delta\in R^{\circ}\cup \{0\}\right]
  \otimes
\mathcal{M}(\Gamma(2)\cap \Gamma)
 \,.
\end{equation}
The results in Section \ref{secmodularityattorsion} tells that
$[S]_{m\in \mathbb{Z}-\{0\}} (\delta),\delta \in  R^{\circ}\cup \{0\}$ are weight-two holomorphic modular forms for $\Gamma(2)$ with trivial multiplier systems,
while we have
\begin{equation}
\{[S]_{0} (\delta) ,\delta \in R^{\circ}\cup \{0\}\}=\{  \widehat{e}_{1} ,  \widehat{e}_{2} , \widehat{e}_{3} , \widehat{\eta}_{1}  \}\,.
\end{equation}
Since $\mathcal{M}(\Gamma)\otimes M(\Gamma(2))\subseteq \mathcal{M}(\Gamma(2)\cap \Gamma)$,
the statement on the ring then follows.

The higher genus differentials are constructed from the genus zero ones. Since all ingredients are differential polynomials with coefficients in the ring $\widehat{\mathcal{K}}$, the conclusion follows automatically.

Observe that taking the $u$-derivative of an almost-meromorphic Jacobi form of index $0$ increases the weight by one.
As long as its Laurent coefficients are concerned, the recursion kernel $K$
can be regarded as an almost-meromorphic Jacobi form of weight $0$.
By tracing the degrees in the recursion formula \eqref{eqnWgI+1recursion},
and the weight $2$ of $\widehat{\omega}_{0,2}$ computed before,
we then immediately see the total weight of $\widehat \omega_{g,n}$
as an almost-meromorphic Jacobi form is $n$.

\item
This follows from the proof of Part 2 and the definition of $\hat{F}_{g}$ in \eqref{eqnFgrecursion}.

\end{enumerate}
\end{proof}

According to the proof of Remodeling Conjecture \cite{BKMP2009, FLZ16}, the GW potentials $d_{X_{1}}\cdots d_{X_n}F_{g,n}
$ and $F_{g}$ for the
 toric CY 3-fold $\cX$ coincide with
the differentials $\omega_{g,n}$ and $F_{g}$ produced by topological recursion for the mirror curve, using the Bergmann kernel $B$.
Observe that the non-holomorphic dependences in $\tau$ of $\widehat \omega_{g,n}, \hat{F}_{g}, 2g-2+n>0$ are polynomial in
$1/ \mathrm{Im} \tau$.
Taking the holomorphic limit, we arrive at the following easy consequence of Theorem \ref{thmhighergenusWgn}.
\begin{thm}\label{thmholhighergenusWgn}
	Consider the local toric Calabi-Yau 3-folds $\cX=K_{S}, S=\mathbb{P}^{2}, W\mathbb{P}[1,1,2], \mathbb{P}^{1}\times
	\mathbb{P}^{1}, \mathbb{F}_{1}$. Consider non-trivial one-parameter subfamilies of the mirror curves with  hyperelliptic structure determined by the corresponding brane.
	The following statements hold.

		The GW potentials $d_{X_{1}}\cdots d_{X_n}F_{g,n}=\omega_{g,n},2g-2+n>0, n>0$, as the holomorphic
		limits of the differentials $\widehat{\omega}_{g,n}(u_1,...u_n)$ which are almost-meromorphic Jacobi forms, are quasi-meromorphic Jacobi forms.
		The structure as quasi-meromorphic Jacobi forms is as exhibited in Theorem
		\ref{thmhighergenusWgn}, with the Schiffer kernel $S$ replaced by the Bergmann kernel $B$.

		The GW potentials $F_{g}=\omega_{g,0}, g\geq 2$,
		as the holomorphic limits of the differentials $\widehat{\omega}_{g,0}$ which are almost-meromorphic modular forms, are meromorphic quasi-modular forms lying
		in the ring
		\begin{equation}\label{eqnhollimitcoefficientring}
	\mathcal{K}:=\mathcal{M}(\Gamma(2)\cap \Gamma) \otimes \mathbb{C}[e_{1},e_{2},e_{3} ] [\eta_{1}] =\mathcal M(\Gamma(2)\cap\Gamma)\otimes \bC[ E_2]\,.
	\end{equation}
			\end{thm}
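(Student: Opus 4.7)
The plan is to deduce Theorem \ref{thmholhighergenusWgn} from Theorem \ref{thmhighergenusWgn} by applying the holomorphic limit (constant term map) $\lim_{\mathrm{Im}\,\tau\to\infty}$ and then invoking the Remodeling Conjecture. First, I would recall from the end of Section \ref{sec:remodeling} that replacing the Schiffer kernel $\widehat{\omega}_{0,2}=S$ by the Bergmann kernel $\omega_{0,2}=B$ in the Eynard--Orantin recursion \eqref{eqn:EO-recursion} produces $\omega_{g,n}$, and that
\[
\lim_{\mathrm{Im}\,\tau\to\infty}\widehat{\omega}_{g,n}=\omega_{g,n}\,,\qquad \lim_{\mathrm{Im}\,\tau\to\infty}\widehat{F}_{g}=F_{g}\,.
\]
Combined with the proved Remodeling Conjecture (Theorems \ref{thm:remodeling-open} and \ref{thm:remodeling-closed}, quoted from \cite{FLZ16}), which identifies $(-1)^{g-1+n}\omega_{g,n}=d_{X_1}\dots d_{X_n}F_{g,n}$ under the mirror map and $F_g=\omega_{g,0}$, this reduces the theorem to a statement purely about the holomorphic limit of quantities in $\widehat{\mathcal{K}}$.

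Next, I would verify that the constant term map is well-defined on the ring $\widehat{\mathcal{K}}$ appearing in Theorem \ref{thmhighergenusWgn}. By the definition \eqref{eqngenusoneSchifferkernel} and \eqref{eqnnonholomorphicperiodsof2ndkind}, the non-holomorphic dependence of every generator of $\widehat{\mathcal{K}}=\mathcal{M}(\Gamma(2)\cap\Gamma)\otimes\mathbb{C}[\widehat{E}_2]$ enters only through $\widehat{E}_{2}=E_{2}-3/(\pi\,\mathrm{Im}\,\tau)$, so each element of $\widehat{\mathcal{K}}$ is polynomial in $1/\mathrm{Im}\,\tau$ with coefficients in $\mathcal{M}(\Gamma(2)\cap\Gamma)[E_2]$. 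The constant term map thus sends $\widehat{E}_2\mapsto E_2$, $\widehat{e}_k\mapsto e_k$, $\widehat{\eta}_1\mapsto \eta_1$, and maps $\widehat{\mathcal{K}}$ surjectively onto the ring $\mathcal{K}=\mathcal{M}(\Gamma(2)\cap\Gamma)\otimes\mathbb{C}[e_1,e_2,e_3][\eta_1]=\mathcal{M}(\Gamma(2)\cap\Gamma)\otimes\mathbb{C}[E_2]$ of \eqref{eqnhollimitcoefficientring}. Since $\mathcal{K}\subset\widetilde{\mathcal{M}}(\Gamma(2)\cap\Gamma)$, its elements are meromorphic quasi-modular forms, yielding Part 2.

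For Part 1, I would transport the Jacobi structure. By Theorem \ref{thmhighergenusWgn}.2, $\widehat{\omega}_{g,n}$ is a differential polynomial in the Schiffer kernels $S(u_k-u_r)$ with coefficients in $\widehat{\mathcal{K}}$, and is of total weight $n$. Since
\[
S(u_{1},u_{2})-B(u_{1},u_{2})=\widehat{\eta}_{1}\,du_1\boxtimes du_2\,,
\]
applying $\lim_{\mathrm{Im}\,\tau\to\infty}$ replaces $S(u_k-u_r)$ by $B(u_k-u_r)=(\wp(u_k-u_r)+\eta_1)du_k\boxtimes du_r$ and the coefficient ring $\widehat{\mathcal{K}}$ by $\mathcal{K}$. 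The meromorphicity in each $u_k$, the pole orders, and the symmetry in the arguments are preserved because the holomorphic limit commutes with differentiation in the elliptic variables and with the residue/partial fraction operations inherent to topological recursion. The transformation law under $\Gamma(2)\cap\Gamma$ in $\tau$ and under the lattice $\mathbb{Z}\oplus\tau\mathbb{Z}$ in each $u_k$ is inherited term by term from Theorem \ref{thmhighergenusWgn}.2, so $\omega_{g,n}$ is a quasi-meromorphic multi-Jacobi form of total weight $n$, with the structure described in Theorem \ref{thmhighergenusWgn} but with $S$ replaced by $B$.

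The main obstacle, but essentially a bookkeeping one rather than a conceptual one, is to check that the holomorphic limit indeed intertwines the topological recursion with the Schiffer kernel and that with the Bergmann kernel. The key observation is that although the recursion kernel $K$ and the individual pieces in \eqref{eqnWgI+1recursion} are only formal Laurent series at the ramification points, every operation used in the induction of Theorem \ref{thmhighergenusWgn}.2 (Taylor expansion at $u_r$, residue extraction, multiplication) is $\mathbb{C}[1/\mathrm{Im}\,\tau]$-linear and hence commutes with $\lim_{\mathrm{Im}\,\tau\to\infty}$. Once this commutation is checked, the induction of Theorem \ref{thmhighergenusWgn}, run with $B$ in place of $S$, produces precisely the asserted quasi-meromorphic Jacobi form structure for $\omega_{g,n}$ and $F_g$, completing the proof.
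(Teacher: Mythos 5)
Your proposal is correct and follows essentially the same route as the paper: the paper likewise deduces the theorem as an ``easy consequence'' of Theorem \ref{thmhighergenusWgn} by observing that the non-holomorphic dependence of $\widehat\omega_{g,n}$ and $\widehat F_g$ is polynomial in $1/\mathrm{Im}\,\tau$, taking the holomorphic limit (which sends $S\mapsto B$ and $\widehat{\mathcal K}\mapsto\mathcal K$), and invoking the proved Remodeling Conjecture to identify the result with the GW potentials. Your additional verification that the constant term map commutes with the recursion operations is a point the paper leaves implicit (it is asserted at the end of Section \ref{sec:remodeling}), so your write-up is if anything slightly more careful.
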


Recall that in all of our cases, the open GW point $\mathfrak s_0$ in \eqref{eqnopenGWpoint} given by $[x,y,1]=[0,-1,1]$
exists on the mirror curve $C$ independent of the generic complex parameters $(q_{1},\cdots)$.
The expansion of $F_{g,n}$ in terms of $X$
enumerates open GW invariants $\{n_{g,d,\mu}\}_{d,\mu}$
\begin{equation}\label{eqnopenGWgeneratingseries}
F_{g,n}=\sum_{ \mu\geq 1}X^{\mu} \sum_{d\geq 0}n_{g,d,\mu}Q^{d} \,,
\end{equation}
where $\mu=(\mu_{1},\cdots \mu_{n}),  X^{h}:=X_{1}^{\mu_{1}}\cdots X_{n}^{\mu_{n}}$.
See \eqref{eqn:Fgn} for the more detailed expression of this.
In our examples, after restriction to an one-parameter subfamily,
 we have \cite{Aganagic:2002, fang-liu-tseng} (for the $K_{\mathbb{P}^{2}}$ case there is no $q_{2}^{c_{2}}$ term)
\begin{equation}\label{eqnopenmirrormapofoneparameterfamily}
X_{k}=x_{k}\cdot c_{3}Q^{c}q_{1}^{c_{1}}q_{2}^{c_{2}}\,,
\end{equation}
for some $c, c_{1},c_{2}\in \mathbb{Q},c_{3}\in \mathbb{C}$.
Rewrite the generating series \eqref{eqnopenGWgeneratingseries} as
\begin{equation}\label{eqnopenGWgeneratingseriesshifted}
F_{g,n}=\sum_{ \mu\geq 1} (Q^{-c}X)^{\mu}\, \sum_{d\geq 0}n_{g,d,\mu}Q^{d+c\sum_{k}\mu_{k}} \,.
\end{equation}
The ring structure \eqref{eqnhollimitcoefficientring} in Theorem \ref{thmholhighergenusWgn} above
exhibits nice structure of the Taylor coefficients in this expansion.

\begin{cor}\label{corGWexpansionofWgn}
	With the same assumptions as Theorem \ref{thmholhighergenusWgn} above.
	The degree-$\mu$ Taylor coefficients $\sum_{d\geq 0}n_{g,d,\mu}Q^{d+c\sum_{k}\mu_{k}} $ in the expansion \eqref{eqnopenGWgeneratingseriesshifted} of $F_{g,n}$
	are meromorphic quasi-modular forms in the ring
	 $\mathcal{K}$ in \eqref{eqnhollimitcoefficientring}.
				\end{cor}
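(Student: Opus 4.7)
The plan is to extract the Taylor coefficients asked for in Corollary \ref{corGWexpansionofWgn} by passing to $u$-coordinates on the mirror curve via the open mirror map, and then invoking the principle of Section \ref{secmodularityattorsion} that Taylor coefficients of meromorphic Jacobi forms at torsion points are modular forms for the corresponding stabilizing modular subgroup. Since Theorem \ref{thmholhighergenusWgn} already exhibits $\omega_{g,n}$ as a quasi-meromorphic Jacobi form built from the ring $\mathcal{K}$, extracting $u$-Taylor coefficients at a torsion basepoint will automatically produce elements of $\mathcal{K}$.

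First I would confirm the geometric input: the open GW point $\mathfrak{s}_0=[0,-1,1]$ corresponds to a uniformization coordinate $u_0\in\mathbb{C}/(\mathbb{Z}\oplus\tau\mathbb{Z})$ which is torsion but \emph{not} $2$-torsion. This is verified case-by-case in Section \ref{secallfourexamples}: $u_0$ is a $3$-torsion for $K_{\bP^2}$, a $4$-torsion for $K_{W\bP[1,1,2]}$, and an $8$-torsion (or torsion of analogous finite order) for the $K_{\bP^1\times\bP^1}$ and $K_{\bF_1}$ subfamilies. Since Lemma \ref{lemramificationofhyperellipticgenusonecurves} identifies $R^\circ$ with the $2$-torsion subgroup, one has $u_0\notin R^\circ$, so $\omega_{g,n}$, whose only poles in each argument lie in $R^\circ$ by Theorem \ref{thmholhighergenusWgn}, admits a genuine multi-Taylor expansion at $(u_0,\ldots,u_0)$. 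Each term of its coefficient in the trivialization $du_1\boxtimes\cdots\boxtimes du_n$ is a polynomial in $E_2$ whose coefficients are meromorphic Jacobi forms of pure total weight, so applying the torsion-point principle termwise shows every such multi-Taylor coefficient lies in $\mathcal{K}$.

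Next I would recover $\tilde{F}_{g,n}(u,\tau):=F_{g,n}(X(u,\tau),Q(\tau))$ by integrating the coefficient of $\omega_{g,n}$ successively in each $u_i$, using the boundary condition $\tilde{F}_{g,n}\rvert_{u_i=u_0}=0$ inherited from the requirement $\mu_i\geq 1$ in the sum \eqref{eqn:Fgn}; no integration constants outside $\mathcal{K}$ are introduced, so the $u$-Taylor coefficients of $\tilde{F}_{g,n}$ at $(u_0,\ldots,u_0)$ also lie in $\mathcal{K}$. To finish, I would rewrite the open mirror map \eqref{eqnopenmirrormapofoneparameterfamily} as $Q^{-c}X_k=c_3 q_1^{c_1}q_2^{c_2}\,x(u_k,\tau)$, where $c_3 q_1^{c_1}q_2^{c_2}$ is a modular form for some finite-index subgroup of $\Gamma$ (its multiplier system trivialized by the same shrinking of $\Gamma$ carried out in Section \ref{secallfourexamples}), and $x(u,\tau)$ is a meromorphic Jacobi form vanishing simply at $u_0$ whose Taylor coefficients at $u_0$ again lie in $\mathcal{K}$ by the same torsion-point principle. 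Formally inverting the univariate series $(u_k-u_0)\leftrightarrow Q^{-c}X_k$ over $\mathcal{K}$ and reexpanding $\tilde{F}_{g,n}$ then yields each coefficient in \eqref{eqnopenGWgeneratingseriesshifted} as an element of $\mathcal{K}$, i.e.\ as a meromorphic quasi-modular form.

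The real obstacle is geometric and has been addressed already in Section \ref{secallfourexamples}: one must verify the torsion order of $u_0$ in each example and trivialize the multiplier systems of the modular forms arising in the uniformization by passing to a finite-index subgroup of $\Gamma$. Once these inputs are in place, the remaining argument is the direct formal-power-series manipulation sketched above; no additional residue or recursion analysis beyond Theorems \ref{thmhighergenusWgn}--\ref{thmholhighergenusWgn} is needed.
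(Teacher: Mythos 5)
Your overall strategy (expand $\omega_{g,n}$ at the open GW point, convert $u$-derivatives to $X$-derivatives via the chain rule and the open mirror map) matches the paper's, but the step you lean on to conclude that the evaluated quantities $\wp(u_k-u_r)\rvert_{u_k=u_{\mathfrak s_0}}$, $\wp'(u_k-u_r)\rvert_{u_k=u_{\mathfrak s_0}}$ and $\partial_u^m x\rvert_{u_{\mathfrak s_0}}$ are modular --- the ``torsion-point principle'' --- is exactly the step the paper deliberately avoids, and your use of it creates a genuine gap. First, the torsion property of $\mathfrak s_0$ is \emph{not} established for all the subfamilies covered by the corollary: Section \ref{secallfourexamples} verifies it only for $K_{\bP^2}$ ($3$-torsion), $K_{W\bP[1,1,2]}$ ($4$-torsion) and the $q_1=q_2$ subfamily of $K_{\bP^1\times\bP^1}$ ($8$-torsion), while for the $q_1=1$, $q_2=s$ subfamily of $K_{\bF_1}$ the paper explicitly says that determining the $u$-coordinate of $\mathfrak s_0$ is ``more complicated'' and leaves it open; moreover the corollary is stated for arbitrary non-trivial one-parameter subfamilies, for which there is no reason for $\mathfrak s_0$ to be a torsion point at all. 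Second, even where $\mathfrak s_0$ is an $N$-torsion point, the principle of Section \ref{secmodularityattorsion} only gives modularity for the subgroup stabilizing $u_{\mathfrak s_0}-u_r$ (a level-$\mathrm{lcm}(2,N)$ congruence condition), not for $\Gamma(2)\cap\Gamma$, so it does not by itself place the coefficients in the ring $\mathcal K$ of \eqref{eqnhollimitcoefficientring} as the corollary asserts.

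The paper's proof replaces this with an argument that needs no torsion input: since $x,y$ are rational in $\wp(u+\epsilon),\wp'(u+\epsilon)$ over $\mathcal{M}(\Gamma)$ and take the \emph{constant} values $x=0$, $y=-1$ at $\mathfrak s_0$, the quantities $\wp(u_{\mathfrak s_0}+\epsilon)$, $\wp'(u_{\mathfrak s_0}+\epsilon)$ and $\partial_u^{m}x\rvert_{u_{\mathfrak s_0}}$ lie in $\mathcal{M}(\Gamma)$ (the same argument as for \eqref{eqnLambdaexpansionismodular}); combining this with \eqref{eqnmodularityofvaluesatramification} and the addition formula \eqref{eqnadditionformulaforwp} then expresses $\wp(u_{\mathfrak s_0}-u_r)$ and $\wp'(u_{\mathfrak s_0}-u_r)$ through quantities already known to lie in $\mathcal{M}(\Gamma(2)\cap\Gamma)$. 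If you substitute this addition-formula step for the torsion-point appeal, the rest of your chain-rule and series-inversion bookkeeping is fine and is essentially the paper's computation read backwards.
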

\begin{proof}
Part 1 of Theorem \ref{thmhighergenusWgn} tells that generically
the differential
$\omega_{g,n}$ does not have singularity at the open GW point \eqref{eqnopenGWpoint} which avoids the ramification points.
Hence developing Taylor expansion makes sense
and we have, recall that $\mu_{k}\geq 1$,
\begin{eqnarray}
&&\sum_{d}n_{g,d,\mu}Q^{d+c\sum_{k}\mu_{k}}\nonumber\\
& =& {1\over \prod_{k=1}^{n}\mu_{k}!}\prod_{k=1}^{n}{\partial^{\mu_{k}} \over \partial (Q^{-c}X_{k})^{\mu_{k}}}\vert_{X=0} F_{g,n}\nonumber \\
&=& {1\over \prod_{k=1}^{n}\mu_{k}!}\prod_{k=1}^{n}{\partial^{\mu_{k}-1} \over \partial (Q^{-c}X_{k})^{\mu_{k}-1}}\vert_{X=0}
{ \omega_{g,n}\over du_{1}\boxtimes \cdots \boxtimes du_{n}}
{1\over \prod_{k=1}^{n}   \partial_{u_{k}} (Q^{-c}X_{k})}\,.
\end{eqnarray}
Theorem \ref{thmholhighergenusWgn}
shows that
\begin{equation}
{ \omega_{g,n}\over du_{1}\boxtimes \cdots \boxtimes du_{n}}\in \mathcal{K}[\wp^{(m\geq 0)} (u_{k}-u_{r}), k\in \{1,2,3\dots n\}, r\in R^{\circ}]\,.
\end{equation}
By using the algebraic relation \eqref{eqnalgebraicrelationwpwp'} between $\wp$ and $\wp'$, the above ring can be reduced to
\begin{equation}
 \mathcal{K}[\wp (u_{k}-u_{r}), \wp'(u_{k}-u_{r}), k\in \{1,2,3\dots n\}, r\in R^{\circ}]\,.
\end{equation}
The chain rule says
\begin{equation}
\partial_{u} (Q^{-c}X)= \partial_{x} (Q^{-c}X) \cdot  \partial_{u}x\,,
\quad
{\partial\over \partial  (Q^{-c}X)}={1\over \partial_{x} (Q^{-c}X)} {1\over \partial_{u}x} {\partial\over \partial u}\,.
\end{equation}
From \eqref{eqnopenmirrormapofoneparameterfamily}
we obtain
\begin{equation}
\partial_{x_{k}} (Q^{-c}X_{k})=c_{3}q_{1}^{c_{1}}q_{2}^{c_{2}}\,.
\end{equation}
According to the discussion in Section \ref{sec:one-parameter}
and Section
\ref{secallfourexamples}, after the restriction to an one-parameter subfamily, both $q_{1},q_{2}$
become modular functions for a certain modular group $\Gamma$ depending on the subfamily.
Hence so is $\partial_{x_{k}} (Q^{-c}X_{k})$, where the subtlety of taking roots of modular functions can be addressed similarly as in
Section
\ref{secallfourexamples}.
The same argument in establishing
\eqref{eqnLambdaexpansionismodular}
in the proof of Part 2 of Theorem  \ref{thmhighergenusWgn} shows that
\begin{equation}\label{eqnmodularityofvaluesatopenGWofxy}
\wp(u+\epsilon)\rvert_{u=u_{\mathfrak{s}_{0}}}\,, \quad
\wp'(u+\epsilon)\rvert_{u=u_{\mathfrak{s}_{0}}}\,,\quad
 \partial_{u}^{m\geq 0}x\rvert_{u=u_{\mathfrak{s}_{0}}}\in \mathcal{M}( \Gamma)\,.
\end{equation}
This implies that the values at the open GW point
of terms arising from differentials of the term $\partial_{u_{k}}x_{k}$
also lie in $\mathcal{M}( \Gamma)$.

To prove the desired statement, it remains to show
\begin{equation}
\wp(u_{k}-u_{r})\rvert_{u=u_{\mathfrak{s}_{0}}}\,, \quad \wp'(u_{k}-u_{r})\rvert_{u=u_{\mathfrak{s}_{0}}}\in \mathcal{M}(\Gamma(2)\cap \Gamma)\,.
\end{equation}
This is automatically
true for those cases in which
$u_{\mathfrak{s}_{0}}$ is identified with a torsion point
according to the discussion in Section \ref{secmodularityattorsion}.
In general, we use \eqref{eqnmodularityofvaluesatramification},  \eqref{eqnmodularityofvaluesatopenGWofxy} and the addition formula for $\wp$ which tells that
\begin{equation}\label{eqnadditionformulaforwp}
\wp(u_{\mathfrak{s}_{0}} -u_{r})
={1\over 4}
\left({
\wp'(u_{\mathfrak{s}_{0}}+\epsilon)+\wp'(u_{r}+\epsilon)
\over \wp(u_{\mathfrak{s}_{0}}+\epsilon)-\wp(u_{r}+\epsilon)}
\right)^2
-\wp(u_{\mathfrak{s}_{0}}+\epsilon)
-\wp(u_{r}+\epsilon)\,.
\end{equation}
\end{proof}

\begin{rem}
	For each $g, n$, $ \omega_{g,n}$ is an $n$-variable differential polynomials in $\wp$.
	By carefully keeping track of the degrees in the generators including the derivatives of the Weierstrass-$\wp$
	functions and the meromorphic quasi-modular forms basing on the structure of the coefficient ring in \eqref{eqnrefinedringstructure}, we can see that
	for each fixed $n$, there are only finitely many possible terms (see e.g. Example \ref{exKP2continued}) with numbers being coefficients.
	Again using the algebraic relation \eqref{eqnalgebraicrelationwpwp'} between $\wp$ and $\wp'$, we can futher reduce the number of generators since differential polynomials in $\wp$
	are polynomials in $\wp,\wp'$.
	This structure tells that determining $ \omega_{g,n}$ can be reduced to a finite computation.
	In particular, knowing the first few terms (depending on $g, n$) in the expansion of $ \omega_{g,n}$,
	which can in principle be computed from the
	A-model of the mirror symmetry side, would then be enough to fix $ \omega_{g,n}$ completely.
\end{rem}

\subsection{Holomorphic anomaly equations}
\label{secholomorphicanomalyequations}

In \cite{Bershadsky:1993ta, Bershadsky:1993cx}, it is argued from physics that the closed string free energies $\widehat{\mathcal{F}}_{g}$ satisfy a system of recursive equations called holomorphic anomaly equations (HAE). We recall the set-up of HAE adapted to our local cases from \cite{Konishi:2010local}.\\

Recall that we have a family of mirror curves $\chi: \cC\to \cU_\cC$, which is one dimensional when $S=\bP^2$, and is two dimensional for the other three cases.

When the dimension of $\cU_\cC$ is $2$, as a special case of \cite[Definition 6.1]{Konishi:2010local}, there is a rank $1$ subbundle $T^0 \cU_\cC \subset T \cU_\cC$. It is characterized by
\[
v\in T^0\cU_\cC \iff
\partial_v \left ( \int_{\tilde A} \lambda\right)=0,\ \forall \tilde A\in K^\circ(C^\circ;\bZ).
\]
Here $K^\circ(C^\circ;\bZ)=\ker(K_1(C^\circ;\bZ)\to H_1(C;\bZ))$. As shown in \cite{Konishi:2010local}, there are coordinates system $t_0, t_1$ on $\cU_\cC$ such that $\frac{\partial}{\partial t_0}\in T^0 \cU_\cC$. We choose primitive cycles $\tilde A'_0,\tilde A'_1\in K_1(C^\circ;\bZ)$ and local coordinates (near $C$ in the family $\cC$)
\[
t_0=\frac{1}{2\pi\sqrt{-1}}\int_{\tilde A'_0} \lambda,\ t_1=\frac{1}{2\pi\sqrt{-1}}\int_{\tilde A'_1}\lambda\,,
\]
such that $\tilde A'_0, \tilde A'_1$ are linearly equivalent to $\tilde A_0,\tilde A_1$ in $K_1(C^\circ;\bC)$, and that $\tilde A'_1 \in K^\circ(C^\circ;\bZ)$. The cycles $\tilde A_0$ and $\tilde A_1$ are defined in Section \ref{sec:remodeling} to yield mirror maps. By the special geometry property, for any $\tilde A\in K^\circ (C^\circ;\bZ)$,
\[
\frac{\partial}{\partial t_0} \int_{\tilde A} \lambda=\int_{\tilde A}\omega_0=0\,,
\]
where $\int_{\tilde A_0'} \omega_0=1$ and $\omega_0\in \Omega^1(C)$. So for all of our four examples we have local coordinates $\{t_a\}_{a=0}^{\fp-1}$ on $\cU_\cC$ where $\fp=h^2(\cX)=1$ or $2$, and $\frac{\partial}{\partial t_0}\in T^0 \cU_\cC$.

Let $A\in H_1(C;\bZ)$ be the image of $\tilde A_0'\in K_1(C^\circ;\bZ)$, and $A,B$ be a symplectic basis of $H_1(C;\bZ)$ as before. We define (for $k=0,\dots, \fp-1$)
\begin{align*}
G_{0\bar 0}&=-{\sqrt{-1}}\int_C \omega_0 \wedge \bar \omega_0=-\sqrt{-1}(\tau-\bar \tau),\\
C_{00k}&=\sqrt{-1}\int_C \nabla_{ k}\omega_0 \wedge \omega_0\,.
\end{align*}
The quantity $G_{0\bar 0}$ defines a Hermitian metric (analogue of Weil-Petersson metric in the compact case) on $T^0\cU_\cC$: $G(\frac{\partial}{\partial t_0},\frac{\partial}{\partial t_0})=G_{0\bar 0}$, while $\nabla$ is the Gauss-Manin connection and $C_{00k}$ is called the Yukawa coupling. Let $C^{00}_{\bar k}=\overline{C_{00k}} (G_{0\bar 0})^{-2}.$ For $g>1$, the holomorphic anomaly equations are (see (7.5) of \cite{Konishi:2010local})
\begin{align}
\bar{\partial}_{\bar{k}}\hat{F}_{g}
&={1\over 2}
C^{00}_{\bar{k}}
\left(
D_{0}D_{0}\hat{{F}}_{g-1}
+\sum_{g_1=1}^{g-1}
D_{0}\hat{{F}}_{g_{1}} \cdot D_{0}\hat{{F}}_{g-g_1}
\right)\nonumber \\
&={1\over 2}
C^{00}_{\bar{k}}
\left(
\partial_{t_0}\partial_{t_0}\hat{{F}}_{g-1}
+\kappa\frac{\partial \tau}{\partial t_0}\partial_{t_0}\hat F_{g-1}+\sum_{g_1=1}^{g-1}
\partial_{t_0}\hat{{F}}_{g_{1}} \cdot \partial_{t_0}\hat{{F}}_{g-g_1}\right)\,.
\label{eqnHAE}
\end{align}
In this equation, the second $D_0$ on $\hat F_{g-1}$ (acting on $\hat F_{g-1}$ directly) and both $D_0$ on $\hat F_{g_1},\hat F_{g_2}$ are just $\frac{\partial}{\partial t_0}$: we just regard them as a connection on the trivial line bundle $\cL$ of which $\hat F_g$ is a smooth section along $\{t_1=\mathrm{constant}\}$
\[
D: \Gamma(\cL) \to \Gamma(\cL \otimes (T^0\cU_\cC)^*)\,.
\]
The operator $D_0$ acting on $D_0\hat F_{g-1}$ is (where $\kappa=-1/(\tau-\bar\tau)$ as defined in \eqref{eqn:modified-cycles})
\[
D_0=\frac{\partial}{\partial t_0}+\kappa \frac{\partial \tau}{\partial t_0}\,,
\]
which is the Chern connection from the Weil-Petersson metric on $T^0\cC$. We don't give a B-model definition for $\hat F_1$ in this paper, but note that $D_0 \hat F_1=\int_B \hat \omega_{1,1}$.\\

It is shown in \cite{Eynard:2007holomorphic} that $\hat F_g, g\geq 2$  produced by the topological recursion from any spectral curve (and in particular for our mirror curves in our cases) satisfy an equation like \eqref{eqnHAE}. Moreover $\widehat \omega_{g,n}$ produced from the topological recursion also satisfies a similar set of equations for $2g-2+n>0$
\begin{align}
&\bar{\partial}_{\bar{k}}\widehat{\omega}_{g,n}(p_1,\dots, p_n) \nonumber\\
=&{1\over 2} C^{00}_{\bar k}
\left(
D_{0}D_{0}\omega_{g-1, n}(p_1,\dots,p_n)
+\sum_{ \substack{g_{1}+g_{2}=g,\\
 J\sqcup K=\{1,\dots,n\}, \\
 (g_1,|J|)\neq (0,0),(g,n)}}
D_{0}\hat \omega_{g_{1}, |J|}(p_J) \cdot D_0\hat \omega_{g_{2}, |K|}(p_K)
\right)\,.\label{eqnHAEfortopologoicalrecursion}
\end{align}
In the equation we regard $\hat \omega_{g,0}=\hat F_g$, $\hat \omega_{0,1}=\lambda$ and $D_0 \hat \omega_{0,1}=\omega_0$. Equation \eqref{eqnHAE} is a special case of \eqref{eqnHAEfortopologoicalrecursion}.

\begin{rem}
It follows from \cite{Eynard:2007holomorphic} that the Yukawa coupling has an A-model description under mirror symmetry
\begin{equation}\label{eqnantiholYukawa}
C_{00k}= -\frac{1}{(2\pi)^2}\overline{\left ({\partial^3 F_0\over \partial  t_0^2\partial t_k}\right)}.
\end{equation}
The quantity $F_0$ is the A-model genus zero GW potential including the classical limit term of the intersection theory for equivariant cohomology.
\end{rem}

Now we translate the above differential equations \eqref{eqnHAE} and  \eqref{eqnHAEfortopologoicalrecursion} for the non-holomorphic (in $t_0$) differentials $\widehat{\omega}_{g,n}$, which are defined by using the Schiffer kernel $S$, into equations for the corresponding holomorphic differentials $\omega_{g,n}$ defined using the Bergman kernel $B$.

In proving the modularity results in the previous section we have restricted ourselves to non-trivial one-parameter subfamilies $\chi_{\mathrm{res}}: \cC_{\mathrm{res}}\rightarrow \mathcal{U}_{\mathrm{res}}$ for the three two-parameter family cases.
Here we restrict to a subfamily $\{t_1=\mathrm{const}\}$ which is more restrictive than our theorem for modularity (Theorem \ref{thmhighergenusWgn}). From Theorem \ref{thmhighergenusWgn}, we know that the
$\widehat{\omega}_{g,n}$'s
are polynomials
of almost-meromorphic Jacobi forms and almost-meromorphic modular forms, with the
only nontrivial non-holomorphic dependence in
$t_0$ entering through the Schiffer kernel $S$
and the non-holomorphic (in $\tau$) generators $\widehat{e}_{a}, a=0,1,2,3$ in \eqref{eqnnonholomorphicperiodsof2ndkind}.
Therefore, by the chain rule,
\begin{equation}
  \frac{\partial}{\partial \bar t_0}=
\sum_{a=0}^{3}{\partial  \widehat{e}_{a} \over \partial \bar{t}_0}  {\partial \over \partial \widehat{e}_{a}}
+
\sum_{k,r}
{\partial  S_{kr}\over \partial \bar{t}_0}  {\partial \over \partial S_{kr}}\,,
\end{equation}
where $S_{kr}=S(u_k-u_r)$ stands for the Schiffer kernel with argument $u_{k},u_{r}, k=1,2,\cdots n, r\in R$.
\begin{rem}
Here
 in order to apply the chain rule and hence translate the derivative
$\partial_{\bar{t}_{0}}$ into the derivative with respect to the generators,
we do not need the algebraic independence among the generators
$\hat{e}_{0}=\hat{\eta}_1, \hat{e}_{a}=e_{a}
+\hat{\eta}_1, a=1,2,3$ and $S_{kr},k=1,2\cdots n, r\in R$.

In fact, fixing $k$ and varying $r$, by using the addition formula
for $\wp$ as in \eqref{eqnadditionformulaforwp}, one can show that
the transcendental degree of of the field generated by
$e_{a}, a=1,2,3, \wp(u_k-u_r), r\in R$ over the field generated by
$e_{a}, a=1,2,3$
is $1$.
Moreover, the rational functions $\wp(u_k-u_r), \wp(u_k'-u_r), k\neq k'$ are algebraically independent over the field generated by
$e_{a}, a=1,2,3$, while $\hat{e}_{0}$ is algebraically independent of
any set of meromorphic quantities.
\end{rem}

From the explicit formulae for the generators $\widehat e_{a}$ in \eqref{eqnnonholomorphicperiodsof2ndkind} and for the Schiffer kernel $S$ in  \eqref{eqngenusoneSchifferkernel}, this can be simplified into
\begin{equation}
{\partial  \hat{\eta}_{1}\over \partial \bar{t}_0} \sum_{a=0}^{3} {\partial \over \partial \widehat{e}_{a}}
+
{\partial  \hat{\eta}_{1}\over \partial \bar{t}_0} \sum_{k,r}  {\partial \over \partial   S_{kr}}\,.
\end{equation}
Hence \eqref{eqnHAEfortopologoicalrecursion} becomes (for $I=\{1,\dots,n\}$ and $p_I=(p_1,\dots,p_n)$)
\begin{align}
&\left(\sum_{a=0}^{3} {\partial \over \partial \widehat{e}_{a}}
+
\sum_{k,r}  {\partial \over \partial   S_{kr}}\right)
\widehat{\omega}_{g,n}(p_I) \nonumber\\
&=
{1\over 2}{C^{00}_{\bar 0} \over { \partial_{\bar{t}_0} }  \hat{\eta}_{1} }\cdot
\left(
D_{0}D_{0}\hat\omega_{g-1, n}(p_I)
+\sum_{ \substack{g_{1}+g_{2}=g,\\I=J\sqcup K, (g_1,J)\neq(0,\emptyset),(g,I) }}
D_{0}\hat\omega_{g_{1},  |J|}(p_J) \cdot D_{0}\hat\omega_{g_{2}, |K| }(p_K)
\right)\,.
\end{align}
In the case of compact Calabi-Yaus, the term
$C^{00}_{\bar{0}}$ is usually rewritten  \cite{Bershadsky:1993cx} with the help of results computed from the Weil-Petersson metric on the moduli space. Define the propagator
$S^{00}$ to be
\begin{equation}\label{eqnclosedstringpropagator}
\bar{\partial}_{\bar{0}}S^{00}=C^{00}_{\bar{0}}\,.
\end{equation}
The computations for $S^{00}$ in \cite{Alim:2013eja}
for the local Calabi-Yau cases yield explicit results for them in terms of almost-holomorphic modular forms (actually we can take any solution to \eqref{eqnclosedstringpropagator} whose non-holomorphic dependence has no ambiguity). The structure theorem for almost-holomorphic modular forms
\cite{Kaneko:1995} tells that their nontrivial anti-holomorphic dependences are in polynomials in $Y:=-\pi /\mathrm{Im }\tau$. For the current cases the quantities $S^{00}$ are in fact linear in $Y$. This then leads to
\begin{equation}
{\partial_{\bar{t}_0}S^{00}\over \partial_{\bar{t}_0}\hat{\eta}_{1}}
={\partial_{\bar{\tau}} S^{00}\over \partial_{\bar{\tau}}\hat{\eta}_{1}}
={\partial_{Y} S^{00} \over \partial_{Y}\hat{\eta}_{1}}\,.
\end{equation}

The BCOV type holomorphic anomaly equation \eqref{eqnHAEfortopologoicalrecursion} for $\widehat{\omega}_{g,n}$
is finally translated into the Yamaguchi-Yau type \cite{Yamaguchi:2004bt}  functional equation
\begin{align}
&\left(\sum_{a=0}^{3} {\partial \over \partial \widehat{e}_{a}}
+
\sum_{k,r}  {\partial \over \partial   S_{kr}}\right)
\widehat{\omega}_{g,n}(p_I)  \nonumber\\
=&{1\over 2}
{\partial_{Y} S^{00} \over \partial_{Y}\hat{\eta}_{1}}
\left(
D_{0}D_{0}\hat\omega_{g-1, n}(p_I)
+\sum_{\substack{ g_{1}+g_{2}=g,\\
I=J\sqcup K, (g_1,J)\neq(0,\emptyset),(g,I) }}
D_{0}\hat\omega_{g_{1},  |J|}(p_J) \cdot D_{0}\hat\omega_{g_{2}, |K| }(p_K)
\right)\,.
\end{align}

Due to the structure for
$\widehat{\omega}_{g,n}$ in Theorem \ref{thmhighergenusWgn},
this identity is an identity for polynomials in $Y$ (with coefficients being holomorphic quantities).
Therefore, we can take the degree zero term in $Y$ (called the holomorphic limit).
Observe that the holomorphic limit of the holomorphic derivatives of $Y$ vanish in the holomorphic limit.
This then yields a functional equation for the differentials $\omega_{g,n}$ produced by using the Bergmann kernel $B$
(in what follows $B_{kr}=B(u_{k}-u_{r})$)
\begin{align}
&\left( {\partial \over \partial \eta_{1}}
+
\sum_{k,r}  {\partial \over \partial   B_{kr}}\right)
\omega_{g,n}(p_I)  \nonumber\\
=&{1\over 2}
{\partial_{Y} S^{00} \over \partial_{Y}\hat{\eta}_{1}}
\left(
\partial_{t_0}\partial_{t_0}\hat\omega_{g-1, n}(p_I)
+\sum_{ \substack{g_{1}+g_{2}=g,\\
I=J\sqcup K, (g_1,J)\neq(0,\emptyset),(g,I) }}
\partial_{t_0}\hat\omega_{g_{1},  |J|}(p_J) \cdot \partial_{t_0}\hat\omega_{g_{2}, |K| }(p_K)
\right)\,.
\label{eqn:YY-open}
\end{align}

Note that the other generators discussed in Theorem \ref{thmhighergenusWgn}
are considered to be independent of $B$. The reason is that they are so before the holomorphic limit: $S$ includes the transcendental quantity $Y$ while the others do not. Plainly, that $B$ is not modular permits us to distinguish it from the rest of the generators which are all modular. This is what makes $B$ algebraically independent of the rest. 

Combing the proof of Remodeling Conjecture for toric CY's in \cite{BKMP2009, FLZ16}, it follows then that the GW potentials satisfy the above Yamaguchi-Yau type functional equations. We summarize the results in the following theorem.
\begin{thm}\label{thmhae}
	Consider the local toric Calabi-Yau 3-folds $\cX=K_{S}, S=\mathbb{P}^{2}, W\mathbb{P}[1,1,2], \mathbb{P}^{1}\times
	\mathbb{P}^{1}, \mathbb{F}_{1}$. Consider a non-trivial one-parameter subfamily $\chi_{\mathrm{res}}: \cC_{\mathrm{res}}\rightarrow \mathcal{U}_{\mathrm{res}}$ such that $t_1$ is a constant. The GW potentials $\omega_{g,n},2g-2+n>0$ satisfy the Yamaguchi-Yau-type holomorphic anomaly equations \eqref{eqn:YY-open}.
  The quantity $S^{00}$ is defined to be a solution to \eqref{eqnclosedstringpropagator}.
  As a special case, the closed GW potentials $F_{g}$ $(g>1)$  satisfy \begin{align}
  &{\partial \over \partial \eta_{1}}
  F_g
  ={1\over 2}
  {\partial_{Y} S^{00} \over \partial_{Y}\hat{\eta}_{1}}
  \left(
  \partial_{t_0}\partial_{t_0} F_{g-1}
  +\sum_{ \substack{g_{1}+g_{2}=g,\\
  g_1\neq 0,g }}
  \partial_{t_0}F_{g_{1}} \cdot \partial_{t_0}F_{g_{2}}
  \right)\,.
  \label{eqn:YY-closed}
  \end{align}
\end{thm}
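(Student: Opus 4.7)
The plan is to derive the Yamaguchi-Yau equation by starting from the BCOV-type holomorphic anomaly equation \eqref{eqnHAEfortopologoicalrecursion} for the Schiffer-kernel differentials $\widehat{\omega}_{g,n}$, which holds for any spectral curve by \cite{Eynard:2007holomorphic}, and translating it into a purely holomorphic identity for the Bergman-kernel differentials $\omega_{g,n}$ using the structural results of Theorem \ref{thmhighergenusWgn}. First I would invoke Theorem \ref{thmhighergenusWgn} to conclude that every $\widehat{\omega}_{g,n}$ is a polynomial in the generators $\widehat{e}_0,\widehat{e}_1,\widehat{e}_2,\widehat{e}_3$ and in the Schiffer kernel values $S_{kr}=S(u_k-u_r)$, with coefficients that are holomorphic in $t_0$. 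Since the only non-holomorphic dependence enters through $\widehat{\eta}_1$ (appearing both in each $\widehat{e}_a$ and in $S$), the chain rule gives
\begin{equation*}
\frac{\partial}{\partial \bar t_0}\widehat{\omega}_{g,n}
=\frac{\partial \widehat{\eta}_1}{\partial \bar t_0}\left(\sum_{a=0}^{3}\frac{\partial}{\partial \widehat{e}_a}+\sum_{k,r}\frac{\partial}{\partial S_{kr}}\right)\widehat{\omega}_{g,n}\,.
\end{equation*}

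Next I would substitute this into \eqref{eqnHAEfortopologoicalrecursion} and divide by $\partial_{\bar t_0}\widehat{\eta}_1$, producing on the right-hand side the coefficient $C^{00}_{\bar 0}/\partial_{\bar t_0}\widehat{\eta}_1$. Introducing the propagator $S^{00}$ via \eqref{eqnclosedstringpropagator}, this coefficient equals $\partial_{\bar t_0}S^{00}/\partial_{\bar t_0}\widehat{\eta}_1$. Here I would invoke the case-by-case computations of \cite{Alim:2013eja} (and the structure theorem for almost-holomorphic modular forms of \cite{Kaneko:1995}) to show that in our four local CY cases $S^{00}$ is \emph{linear} in $Y=-\pi/\operatorname{Im}\tau$, so that the ratio can be rewritten as $\partial_Y S^{00}/\partial_Y\widehat{\eta}_1$ and is in fact a constant (or at worst a purely holomorphic function of $\tau$). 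This is the step I expect to require the most care, since it hinges on the specific form of the Weil–Petersson metric on the one-parameter subfamily $\{t_1=\mathrm{const}\}$ and on the global modularity of $S^{00}$; I would verify it by explicit computation in each of the four geometries, as done for $K_{\bP^2}$ in Example \ref{exKP2continued}.

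At this point I have the Yamaguchi–Yau equation for $\widehat{\omega}_{g,n}$. Both sides are polynomials in $Y$ by Theorem \ref{thmhighergenusWgn}, so extracting the degree-zero coefficient in $Y$ (the holomorphic limit $\operatorname{Im}\tau\to\infty$) is well-defined. In this limit $\widehat{e}_a\to e_a$ for $a\neq 0$ while $\widehat{\eta}_1\to \eta_1$, and all holomorphic derivatives of $Y$ vanish, so $D_0\widehat{\omega}_{g,n}$ collapses to $\partial_{t_0}\omega_{g,n}$. The left-hand side becomes $(\partial/\partial \eta_1+\sum_{k,r}\partial/\partial B_{kr})\omega_{g,n}$, where $B_{kr}$ denotes the Bergman kernel which, crucially, is algebraically independent of the remaining modular generators (this independence follows from the modularity results of Section~\ref{secallfourexamples}: every other generator is a modular object while $B$ is not, so the chain rule used above remains valid). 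This yields equation \eqref{eqn:YY-open}.

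Finally, to pass from statements about the topological-recursion differentials $\omega_{g,n}$ to statements about the GW potentials, I would invoke the Remodeling Conjecture as proved in \cite{BKMP2009,FLZ16} (Theorems \ref{thm:remodeling-open} and \ref{thm:remodeling-closed}), which identifies $d_{X_1}\cdots d_{X_n}F_{g,n}$ with $(-1)^{g-1+n}\omega_{g,n}$ and $F_g$ with the residue expression for $\omega_{g,0}$ under the open-closed mirror map. This identification is compatible with the derivative $\partial_{t_0}$, since $t_0$ is defined as an $A$-period of $\lambda$ on both sides. The closed-sector specialization \eqref{eqn:YY-closed} then follows by setting $n=0$, and the constant $\partial_Y S^{00}/\partial_Y\widehat{\eta}_1$ takes the value $3/(2\pi^2)$ for $K_{\bP^2}$ as stated.
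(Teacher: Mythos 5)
Your proposal follows essentially the same route as the paper: starting from the Eynard--Mari\~no--Orantin holomorphic anomaly equation for the Schiffer-kernel differentials, converting $\bar\partial_{\bar t_0}$ into derivatives with respect to the generators $\widehat e_a$ and $S_{kr}$ via the structure theorem (Theorem \ref{thmhighergenusWgn}), introducing the propagator $S^{00}$ and using its linearity in $Y$ from \cite{Alim:2013eja}, extracting the degree-zero part in $Y$ to pass to the Bergman-kernel differentials, and finally invoking the remodeling conjecture to transfer the identity to the GW potentials. The argument is correct and matches the paper's proof step for step, including the remark on the algebraic independence of $B$ from the remaining modular generators.
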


\begin{ex}[$K_{\mathbb{P}^{2}}$ continued]
\label{exKP2continued}

The natural parameters in the generating series of open GW invariants
are the closed modulus $T$ and the open modulus $X$.

The closed modulus $T$ is the K\"ahler normal coordinate with respect to
Weil-Petersson metric on the moduli space of K\"ahler structures of the CY 3-fold $K_{\mathbb{P}^2}$,
near the large volume limit.
In the B-model this is the flat coordinate, defined as a period integral in Section \ref{sec:one-parameter}.
Explicitly it is,  see \cite{Chiang:1999tz, Aganagic:2000gs, Aganagic:2002, Batyrev:1993variations, Stienstra:1997resonant, Hosono:2004jp, Konishi:2010local},
\begin{equation}\label{eqnclosedmodulus}
T=\log (-1)+\log {-q_{1} \over 27}+\sum_{k\geq 1} {(3k)!\over (k!)^{3}} {1\over k} ({-q_{1}\over 3^{3}})^{k}\,,
\quad
q_{1}=-3^{3}{\eta(3\tau)^{9}\over \Theta^{3}_{A_{2}}(2\tau)\eta(\tau)^{3}}\,.
\end{equation}
Its derivative in the variable $\log (-q_{1})$ is related to the $\theta$-function of the $A_{2}$-lattice
and is a modular form for $\Gamma_{0}(3)$.
The quantity $Q=e^{T}$ is related to modular variable $e^{2\pi i \tau}$
of the mirror curve
by an infinite product \cite{Mohri:2001zz, Stienstra:2005wy, Zhou:2014thesis}.

The open modulus $X$ is described by an integral along a carefully chosen chain in $C$.
According to \cite{Aganagic:2000gs, Aganagic:2002}, one has
\begin{equation}\label{eqnopenmodulus}
X
=\exp\left(\log x+\log (-1)+{1\over 3}\sum_{k\geq 1} {(3k)!\over (k!)^{3}} {1\over k} ({-q_{1}\over 3^{3}})^{k}\right)\,.
\end{equation}
With this choice, the open modulus and the affine coordinate $u$ on the Jacobian is related by using the uniformization in Section \ref{secexKP2}
\begin{eqnarray}\label{eqntomodulus}
X&=&
 \left((-4)^{1\over 3}\kappa^{2} \wp +{3\over 4}\phi^{2} \right)\cdot\exp\left( {1\over 3} \left(T +2 \log (-1)-\log {-q_{1}\over 27}\right)\right)\,.
\end{eqnarray}
Thanks to the identification in Section
\ref{secexKP2}
that the open GW point \eqref{eqnopenGWpoint} is a $3$-torsion point and the results in
Section \ref{secmodularityattorsion}, the coefficients in the expansion in $X$ of the GW potentials $\{\omega_{g,n}\}_{g,n}$
are meromorphic quasi-modular forms in $\tau$.\\

The ring \eqref{eqnhollimitcoefficientring} in Theorem \ref{thmholhighergenusWgn} is a subring of the following
	\begin{equation}
	\mathbb{C}[e_{1},e_{2},e_{3},\eta_{1}]\left[{1\over 1-3\phi x(u_{r})}, {1\over x(u_{r})}, \kappa,\kappa^{-1}, \phi, {1\over \wp''(u_{r})}, \wp^{(m\geq 2)} (u_{r}), u_r\in  \{{1\over 2}, {\tau\over 2}, {1+\tau\over 2}\}\right].
	\end{equation}
		Regarded as a polynomial in $\eta_{1}$,
	the coefficient of any element in this ring is a meromorphic modular form of level
$\Gamma(2)\cap \Gamma_{0}(9)$ as shown in  Section \ref{secexKP2}.
	Using Theorem \ref{thmhighergenusWgn} and the algebraic relation \eqref{eqnalgebraicrelationwpwp'} between $\wp,\wp'$, we see that ${\omega}_{g,n}$ lies in a ring with only finitely many generators.\\

In the computation of genus one free energy,
using the uniformization in Section \ref{secexKP2} it is straightforward to compute
\begin{equation}
 {dy\over d(x-x(r))^{1\over 2}}|_{r}={\partial_{u}y\over (2^{-1}\partial_{u}^2 x )^{1\over 2}}|_{r}=\kappa^2({2\wp''(u_{r})\over (-4)^{1\over 3} })^{1\over 2}\,.
\end{equation}
Using the results in \eqref{eqnperiodsof2ndkind}, we obtain
\begin{equation}
\prod_{r}\wp''(u_{r})=-{1\over 2}\Delta=-{1\over 2}(2\pi)^{12}\eta^{24}\,.
\end{equation}
where $\Delta$ is the Dedekind $\Delta$-function
and $\eta$ is the $\eta$-function.
Hence we get, up to addition by constant,
\begin{equation}
-{1\over 12}\ln \prod_{r} {dy\over d(x-x(r))^{1\over 2}}|_{r}=-{1\over 12}\ln (\kappa^{6}\eta^{12}) \,.
\end{equation}
Combining the above formula for the Bergmann $\tau$-function, we therefore get
\begin{eqnarray}
\hat{F}_{1}&=&
-{1\over 2}\ln \tau_{B}-{1\over 12}\ln \prod_{r} {dy\over d(x-x(r))^{1\over 2}}|_{r}+{1\over 2}\ln \det Y \nonumber \\
&=&-{1\over 2}\log \left(\eta(\tau)\eta(3\tau) \sqrt{\mathrm{Im}\tau}\sqrt{\mathrm{Im}3\tau}\right)\,.
\end{eqnarray}
This agrees with the results in \cite{Aganagic:2006wq, Haghighat:2008gw, Alim:2013eja}
obtained by other means.\\

For the CY 3-fold $K_{\mathbb{P}^{2}}$,  the mirror curve family is an one-parameter family.
Under the flat coordinate $t_0=\frac{1}{3}T$, we have from \cite{Alim:2013eja} (see also \cite{Zhou:2014thesis}) that
\begin{equation}
S^{00}={1\over 2} { 3E_{2} (3\tau)+E_{2}(\tau)\over 4}+{1\over 2} {-3\over \pi \mathrm{Im} \tau}
={1\over 2} { 3E_{2} (3\tau)+E_{2}(\tau)\over 4}+{3\over 2 \pi^2} Y\,.
\end{equation}
Hence in Theorem \ref{thmhae} we have
\begin{equation}
{\partial_{\bar{t}_0}S^{00}\over \partial_{\bar{t}_0}\hat{\eta}_{1}}
={\partial_{Y} S^{00} \over \partial_{Y}\hat{\eta}_{1}}={3\over 2\pi^2}\,.
\end{equation}
\end{ex}

\begin{appendices}

\section{Some explicit formulae }
\label{secappendix}
Some explicit formulae for the disk potential, annulus potential, $\omega_{0,3}$, and $\omega_{1,1}$ for certain special one-parameter families of our four examples are collected in this appendix.
The general expressions are displayed below.
\begin{itemize}
\item Disk potential
\begin{equation}
\partial_x{W}=\log y\cdot {1\over x}\,.
\end{equation}
\item Annulus potential
\begin{equation}
\omega_{0,2}(u_{1},u_{2})=B(u_{1},u_{2})=(\wp(u_{1}-u_{2})+ {\eta}_{1})du_1\boxtimes du_2\,.
\end{equation}

\item Recursion kernel $K=d^{-1}S/\Lambda$,
\begin{eqnarray}
S(u_{1},u_{2})&=&(\wp(u_{1}-u_{2})+\widehat {\eta}_{1}) du_1\boxtimes du_2\,,\nonumber\\
 \Lambda&=&2 \sum_{k=0}^{\infty} {1\over 2k+1} ({y-y^{*}\over y+y^{*}})^{2k+1}\partial_{u}x {1\over x} du\,.
\end{eqnarray}
Here $d^{-1}S$
is as defined in \eqref{eqnnumeratorofrecursionkernel}, and the expression $y^{*}=-y-2h(x)$ in \eqref{eqnshyperellipticinvolutionalgebraic} is determined from the mirror curve equation as in \eqref{eqnhyperellipticformofmirrorcurve} and \eqref {eqnsimplechangeonybyh}.

\item $\omega_{0,3}$
\begin{equation}
\omega_{0,3}(u_{1},u_{2},u_{3})=
\sum_{r\in R^{\circ}}  \left(  2[{1\over \Lambda} ]_{-2}\cdot
\prod_{k=1}^{3}
(\wp(u_{k}-u_{r})+\eta_{1})
\right) du_1\boxtimes du_2\boxtimes du_{3}\,,
\end{equation}

\item $\omega_{1,1}$
\begin{equation}
\omega_{1,1}(u_1) =\sum_{r\in R^{\circ}}\left(
{1\over 24}[{1 \over\Lambda}]_{-2}\wp^{(2)}(u_1-u_r)+{{\eta_1}}[{1\over \Lambda }]_{-2}\wp(u_1-u_r)+{1\over 4}[{1\over \Lambda }]_{0}\wp(u_1-u_r)
\right)du_1\,.
\end{equation}
\end{itemize}
In the above we have used the notation $[-]_{n}$
to denote the degree $n$ Laurent coefficient at the corresponding point in consideration.
Direct computations show that
\begin{eqnarray}
[{1\over \Lambda}]_{-2}=
{1\over  [ \Lambda]_{2}}={1\over a_{0}}
\,,
\quad
[{1\over \Lambda }]_{0}=
-{a_{2}\over a_{0}^{2}}+{a_{1}^{2}\over a_{0}^{3}}\,,
\end{eqnarray}
where
\begin{eqnarray}
a_{0}&=&{2 [x']_{1} [y-y^{*}]_{1}\over    [x]_{0} [y+y^{*}]_{0} }\,,\\
a_{1}&=& {2 [x']_{1} [y-y^{*}]_{1}} \cdot - {   [x]_{0} [y+y^{*}]_{1}+ [x]_{1} [y+y^{*}]_{0}  \over    [x]_{0}^2 [y+y^{*}]^2_{0} }
\\&+&
2{  [x']_{1} [y-y^{*}]_{2}+ [x']_{2} [y-y^{*}]_{1} \over    [x]_{0} [y+y^{*}]_{0} }
\,,\\
a_{2}&=& {2 [x']_{1} [y-y^{*}]^{3}_{1}\over   3  [x]_{0} [(y+y^{*})^3]_{0} } \\
&
+& {2 [x']_{1} [y-y^{*}]_{3}+2 [x']_{2} [y-y^{*}]_{2}+2 [x']_{3} [y-y^{*}]_{1}\over [x]_{0} [(y+y^{*})]_{0}}\\
&-&
 { 2  (  [x']_{1} [y-y^{*}]_{2} + [x']_{2} [y-y^{*}]_{1})  ( [x]_{0} [y+y^{*}]_{1} +[x]_{1} [y+y^{*}]_{0})     \over     [x]_{0}^2 [(y+y^{*})]_{0}^2 } \\
 &+&
 {2  (  [x']_{1} [y-y^{*}]_{1}) ( [x]_{2} [y+y^{*}]_{0} +[x]_{1} [y+y^{*}]_{1}+[x]_{0} [y+y^{*}]_{2})     \over     [x]_{0}^2 [(y+y^{*})]_{0}^2 } \\
  &-&
 {2  (  [x']_{1} [y-y^{*}]_{1}) ( [x]_{1} [y+y^{*}]_{0} +[x]_{1} [y+y^{*}]_{0})^2     \over     [x]_{0}^3 [(y+y^{*})]_{0}^3 }
\,.
\end{eqnarray}

\subsection{$K_{\mathbb{P}^{2}}$}
The affine part of the mirror curve given in Example \ref{ex:1} is equivalent to
\begin{equation}
y^{2}+(x+1)y+q_{1}x^{3}=0\,,
\quad
q_{1}=(-3\phi)^{-3}\,.
\end{equation}
The set of finite ramification points is
$R^{\circ}=\{{1\over 2}, {\tau\over 2}, {1+\tau\over 2}\}$.
Uniformization gives
\begin{equation}
x=-3(-4)^{1\over 3}\kappa^{2} \phi \wp (u)-{9\over 4}\phi^{3}\,,
\quad
y=\kappa^{3}\wp '(u)-{1+x\over 2}\,.
\end{equation}
with
\begin{equation}
\phi(\tau)=\Theta_{A_{2}}(2\tau) {\eta(3\tau) \over \eta(\tau)^{3}  }\,,
\quad
\kappa=\zeta_{6}\,2^{-{4\over 3}} 3^{1\over 2} \pi^{-1}{\eta(3\tau)\over\eta(\tau)^{3}}\,.
\end{equation}

\subsection{$K_{\mathbb{F}_1}$}

The affine part of the mirror curve given in Example \ref{ex:4} is
\begin{eqnarray}
y^2+y+xy+q_1x+q_2x^2y=0\,.
\end{eqnarray}
The set of finite ramification points is
$R^{\circ}=\{0, {1\over 2}, {\tau\over 2}, {1+\tau\over 2}\}$.
The uniformization is given by the iteration of the following changes of coordinates (for some $\epsilon$ and $\kappa$)
\begin{equation}
\alpha=4^{1\over 3}\kappa^{2} \wp(u+\epsilon)-{1\over 3}({1\over 4}+{1\over 2}q_2)\,,\quad
\beta=\kappa^{3}\wp'(u+\epsilon)-\left(({1\over 2}-q_1)\alpha+{1\over 4}{q_2})\right)\,,
\end{equation}
\begin{equation}
x=\beta^{-1}{(\alpha+{q_2\over 2}-q_1^2+q_1)}\,,\quad
y=-{1\over 2}(1+x+q_2x^2)-{1\over 2}+x(x\alpha-({1\over 2}-q_1))\,.
\end{equation}
Taking the special one-parameter family $q_{1}=1,q_{2}=s$, we have
\begin{equation}
s=2^{-8} {\eta^{8}(\tau)\over \eta^{8}(4\tau)}\,,
\quad
\kappa=2^{-{13\over 3}}\pi^{-1} {\eta(2\tau)^2\over \eta(4\tau)^4}  \,.
\end{equation}

\subsection{$K_{\mathbb{P}^{1}\times \mathbb{P}^{1}}$}

Then affine part of the mirror curve given in Example \ref{ex:2} is equivalent to
\begin{eqnarray}
y^{2}+(1+x+q_{1}x^2) y+q_{2}x^{2}=0\,.
\end{eqnarray}
The set of finite ramification points is
$R^{\circ}=\{0, {1\over 2}, {\tau\over 2}, {1+\tau\over 2}\}$.
The uniformization is given by the iteration of the following changes of coordinates (for some $\epsilon$ and $\kappa$)
\begin{equation}
\alpha= 2^{2\over 3}\kappa^{2} \wp(u+\epsilon)+{1\over 12} (-1-2q_{1}+4q_{2})\,,\quad
\beta= \kappa^{3} \wp'(u+\epsilon)- {1\over 2} (\alpha+{1\over 2}q_{1})\,,
\end{equation}
\begin{equation}
x=   \beta^{-1}\left(2^{2\over 3}\kappa^{2} \wp(u+\epsilon) +{1\over 6} (1+2q_{1}-4q_{2})\right)\,,\quad
y=-{1\over 2}+ x (\alpha x -{1\over 2})  - {1\over 2} (1+x +q_{1} x^2)\,.
\end{equation}
Taking the special one-parameter
subfamily
$q_{1}=q_{2}=s$, we have
	\begin{equation}
	s=-2^{-8} {\eta^{8}(\tau)\over \eta^{8}(4\tau)}\,,
	\quad
	\kappa=2^{-{7\over 3}}\pi^{-1}\theta_{2}^{-2}(2\tau)\,.
	\end{equation}

\subsection{$K_{W\mathbb{P}[1,1,2]}$}

The affine part of the mirror curve given in Example \ref{ex:3} is equivalent to
\begin{equation}
y^2+{x^{4}}+y+ b_{4} x^{2}y+ b_{0} xy=0\,,
\quad
q_{1}=b_{4}b_{0}^{-4}\,,
q_{2}=b_{0}^{-2}\,.
\end{equation}
The set of finite ramification points is
$R^{\circ}=\{0, {1\over 2}, {\tau\over 2}, {1+\tau\over 2}\}$.
The following combination is independent of the specialization to an one-parameter subfamily
\begin{equation}
(b_{0}^2-4b_{4})^2
=
64 {   (\theta_{2}^{4}(2\tau) +\theta_{3}^{4} (2\tau))^{2}    \over  \theta_{4}^{8}(2\tau)}\,,
\end{equation}
up to an $SL_{2}(\mathbb{Z})$-transform on $\tau$.

The uniformization is given by the iteration of the following changes of coordinates (for some $\epsilon$ and $\kappa$)
\begin{equation}
\alpha=2^{3\over 2}\kappa^{2} \wp(u+\epsilon) -{1\over 12} (b_{0}^{2}+2 b_{4})\,,\quad
\beta=\kappa^{3}  \wp'(u+\epsilon)-{1\over 2}b_{0}  (\alpha+{1\over 2}b_{4})\,,
\end{equation}
\begin{equation}
x=\beta^{-1}\left(2^{3\over 2}\kappa^{2} \wp(u+\epsilon) +{1\over 3} (b_{4}-{1\over 4}b_{0}^{2}) \right)\,,\quad
y=-{1\over 2} +x (\alpha x-{1\over 2}b_{0})-{1\over 2} (1+b_{0}x+b_{4}x^2)\,.
\end{equation}
Taking the special one-parameter
subfamily
$(q_{1},q_{2})=(0,s)$ that is $b_{4}=0$, we have
	\begin{equation}
	s=64^{-1} {   \theta_{4}^{8}(2\tau) \over  (\theta_{2}^{4}(2\tau) +\theta_{3}^{4} (2\tau))^{2}    }\,,
	\quad
	\kappa=2^{-{1\over 3}} \pi^{-1} \theta_{4}^{-2} (2\tau)\,.
	\end{equation}

\end{appendices}


\begin{thebibliography}{BKMnP10}

\bibitem[ABK08]{Aganagic:2006wq}
Mina Aganagic, Vincent Bouchard, and Albrecht Klemm, \emph{{Topological Strings
  and (Almost) Modular Forms}}, Commun.Math.Phys. \textbf{277} (2008),
  771--819.

\bibitem[AKMV05]{Aganagic:2003db}
Mina Aganagic, Albrecht Klemm, Marcos Marino, and Cumrun Vafa, \emph{{The
  topological vertex}}, Commun.Math.Phys. \textbf{254} (2005), 425--478.

\bibitem[AKV02]{Aganagic:2002}
Mina Aganagic, Albrecht Klemm, and Cumrun Vafa, \emph{Disk instantons, mirror
  symmetry and the duality web}, Z. Naturforsch. A \textbf{57} (2002), no.~1-2,
  1--28. \MR{1906661 (2003f:81183)}

\bibitem[ALM10]{Alim:2008kp}
Murad Alim, Jean~Dominique L{\"a}nge, and Peter Mayr, \emph{{Global Properties
  of Topological String Amplitudes and Orbifold Invariants}}, JHEP
  \textbf{1003} (2010), 113.

\bibitem[ASYZ14]{Alim:2013eja}
Murad Alim, Emanuel Scheidegger, Shing-Tung Yau, and Jie Zhou, \emph{Special
  polynomial rings, quasi modular forms and duality of topological strings},
  Adv. Theor. Math. Phys. \textbf{18} (2014), no.~2, 401--467. \MR{3273318}

\bibitem[AV00]{Aganagic:2000gs}
Mina Aganagic and Cumrun Vafa, \emph{Mirror symmetry, D-branes and counting
  holomorphic discs}, 	arXiv:hep-th/0012041 (2000).

\bibitem[Bat93]{Batyrev:1993variations}
Victor~V Batyrev, \emph{Variations of the mixed Hodge structure of affine
  hypersurfaces in algebraic tori}, Duke Mathematical Journal \textbf{69}
  (1993), no.~2, 349--409.

\bibitem[BB91]{Borwein:1991}
J.~M. Borwein and P.~B. Borwein, \emph{A cubic counterpart of {J}acobi's
  identity and the {AGM}}, Trans. Amer. Math. Soc. \textbf{323} (1991), no.~2,
  691--701. \MR{1010408 (91e:33012)}

\bibitem[BBG94]{Borwein:1994}
J.~M. Borwein, P.~B. Borwein, and F.~G. Garvan, \emph{Some cubic modular
  identities of {R}amanujan}, Transactions of the American Mathematical Society
  \textbf{343} (1994), no.~1, 35--47. \MR{1243610 (94j:11019)}

\bibitem[BBG95]{Berndt:1995}
Bruce~C. Berndt, S.~Bhargava, and Frank~G. Garvan, \emph{Ramanujan's theories
  of elliptic functions to alternative bases}, Transactions of the American
  Mathematical Society \textbf{347} (1995), no.~11, 4163--4244. \MR{1311903
  (97h:33034)}

\bibitem[BCOV93]{Bershadsky:1993ta}
M.~Bershadsky, S.~Cecotti, H.~Ooguri, and C.~Vafa, \emph{{Holomorphic anomalies
  in topological field theories}}, Nucl.Phys. \textbf{B405} (1993), 279--304.

\bibitem[BCOV94]{Bershadsky:1993cx}
\bysame, \emph{{Kodaira-Spencer theory of gravity and exact results for quantum
  string amplitudes}}, Commun.Math.Phys. \textbf{165} (1994), 311--428.

\bibitem[BKMnP09]{BKMP2009}
Vincent Bouchard, Albrecht Klemm, Marcos Mari\~no, and Sara Pasquetti,
  \emph{Remodeling the {B}-model}, Comm. Math. Phys. \textbf{287} (2009),
  no.~1, 117--178. \MR{2480744}

\bibitem[BKMnP10]{BKMP2010}
\bysame, \emph{Topological open strings on orbifolds}, Comm. Math. Phys.
  \textbf{296} (2010), no.~3, 589--623. \MR{2628817}

\bibitem[CI18a]{Coates:2014fock}
T.~{Coates} and H.~{Iritani},
\emph{A Fock sheaf for Givental quantization}, Kyoto J. Math. 54 \textbf{4}, 695--864 (2018).


\bibitem[CI18b]{Coates:2018}
T.~{Coates} and H.~{Iritani}, \emph{{Gromov-Witten Invariants of Local $\bP^{2}$
  and Modular Forms}}, arXiv:1804.03292 [math.AG].

\bibitem[CKYZ99]{Chiang:1999tz}
T.M. Chiang, A.~Klemm, Shing-Tung Yau, and E.~Zaslow, \emph{{Local mirror
  symmetry: Calculations and interpretations}}, Adv.Theor.Math.Phys. \textbf{3}
  (1999), 495--565.

\bibitem[CLS11]{Cox:2011}
David~A. Cox, John~B. Little, and Henry~K. Schenck, \emph{Toric varieties},
  Graduate Studies in Mathematics, vol. 124, American Mathematical Society,
  Providence, RI, 2011. \MR{2810322 (2012g:14094)}


\bibitem[Con96]{Connell1996:elliptic}
Ian Connell, \emph{Elliptic curve handbook}.

\bibitem[CP12]{CP}
Cheol-Hyun Cho and Mainak Poddar, \emph{Holomorphic orbidiscs and {L}agrangian
  {F}loer cohomology of symplectic toric orbifolds}, Journal of Differential Geometry 98.1 (2014): 21-116.

\bibitem[DMZ12]{Dabholkar:2012}
A.~{Dabholkar}, S.~{Murthy}, and D.~{Zagier}, \emph{{Quantum Black Holes, Wall
  Crossing, and Mock Modular Forms}}, arXiv:1208.4074 [hep-th].

\bibitem[Dol97]{Dolgachev:1997}
Igor~V Dolgachev, \emph{Lectures on modular forms. Fall 1997/98}.

\bibitem[{Dub}94]{Dubrovin:1994}
B.~{Dubrovin}, \emph{{Geometry of 2D topological field theories}}. Integrable systems and quantum groups. Springer, Berlin, Heidelberg, 1996. 120-348.

\bibitem[EO07]{Eynard:2007invariants}
Bertrand Eynard and Nicolas Orantin, \emph{Invariants of algebraic curves and
  topological expansion},  Communications in Number Theory and Physics 1.2 (2007): 347-452.

\bibitem[EMO07]{Eynard:2007holomorphic}
Bertrand Eynard, Nicolas Orantin, and Marcos Marino, \emph{Holomorphic anomaly
  and matrix models}, Journal of High Energy Physics \textbf{2007} (2007),
  no.~06, 058.

\bibitem[EZ85]{Eichler:1984}
M.~Eicher and D.~Zagier, \emph{The theory of Jacobi forms}, Progress in
  Math., vol 55, Birkh\"auser Boston, Inc., Boston, MA. (1985)
  

\bibitem[Fay77]{Fay:1977fourier}
John~D Fay, \emph{Fourier coefficients of the resolvent for a Fuchsian group},
  Journal f{\"u}r die reine und angewandte Mathematik \textbf{293} (1977),
  143--203.

\bibitem[FLT13]{fang-liu-tseng}
B.~Fang, C.-C. Liu, and H.-H. Tseng, \emph{Open-closed Gromov-Witten invariants
  of 3-dimensional Calabi-Yau smooth toric DM stacks}. arXiv:1212.6073 [math.AG].

\bibitem[FLZ16]{FLZ16}
Bohan Fang, Chiu-Chu~Melissa Liu, and Zhengyu Zong, \emph{On the remodeling
  conjecture for toric Calabi-Yau 3-orbifolds}.
 arXiv:1604.07123 [math.AG].

\bibitem[GKMW07]{Grimm:2007tm}
Thomas~W. Grimm, Albrecht Klemm, Marcos Marino, and Marlene Weiss,
  \emph{{Direct Integration of the Topological String}}, JHEP \textbf{0708}
  (2007), 058.

\bibitem[HKR08]{Haghighat:2008gw}
Babak Haghighat, Albrecht Klemm, and Marco Rauch, \emph{{Integrability of the
  holomorphic anomaly equations}}, JHEP \textbf{0810} (2008), 097.


  \bibitem[Hos04]{Hosono:2004jp}
S. Hosono, \emph{{Central charges, symplectic forms, and hypergeometric
  series in local mirror symmetry}}, Mirror symmetry. V, 405--439, AMS/IP Stud. Adv. Math., \textbf{38}, Amer. Math. Soc., Providence, RI, 2006.


\bibitem[HV00]{Hori:2000kt}
Kentaro Hori and Cumrun Vafa, \emph{{Mirror symmetry}}, arxiv:hep-th/0002222[hep-th] (2000).

\bibitem[Iri09]{iritani09}
H.~Iritani, \emph{An integral structure in quantum cohomology and mirror
  symmetry for toric orbifolds}, Adv. Math. \textbf{222} (2009), no.~3,
  1016--1079. \MR{2553377 (2010j:53182)}

\bibitem[Kat76]{Katz:1976p}
Nicholas~M Katz, \emph{p-adic interpolation of real analytic Eisenstein
  series}, Annals of Mathematics, vol 104 (1976), 459--571.

\bibitem[KK03]{Kokotov:2003bergmann}
A~Kokotov and D~Korotkin, \emph{Bergmann tau-function on Hurwitz spaces and its
  applications}, arXiv preprint math-ph/0310008, 03--101.

\bibitem[KK04a]{Kokotov:2004tau}
\bysame, \emph{Tau-functions on Hurwitz spaces}, Mathematical Physics, Analysis
  and Geometry \textbf{7} (2004), no.~1, 47--96.

\bibitem[KK04b]{Kokotov:2004tau2}
Aleksey Kokotov and Dmitry Korotkin, \emph{Tau-functions on spaces of Abelian
  differentials and higher genus generalizations of Ray-Singer formula}, Tau-functions on spaces of Abelian differentials and higher genus generalizations of Ray-Singer formula. J. Differential Geom. 82 (2009), no. 1, 35--100.

\bibitem[KL01]{Katz:2001}
Sheldon Katz and Chiu-Chu~Melissa Liu, \emph{Enumerative geometry of stable
  maps with {L}agrangian boundary conditions and multiple covers of the disc},
  Adv. Theor. Math. Phys. \textbf{5} (2001), no.~1, 1--49. \MR{1894336
  (2003e:14047)}

\bibitem[KM10]{Konishi:2010local}
Yukiko Konishi and Satoshi Minabe, \emph{Local B-model and mixed Hodge
  structure}, Advances in Theoretical and Mathematical Physics \textbf{14}
  (2010), no.~4, 1089--1145.

\bibitem[KZ95]{Kaneko:1995}
Masanobu Kaneko and Don Zagier, \emph{A generalized {J}acobi theta function and
  quasimodular forms}, The moduli space of curves ({T}exel {I}sland, 1994),
  Progr. Math., vol. 129, Birkh\"auser Boston, Boston, MA, 1995, pp.~165--172.
  \MR{1363056 (96m:11030)}

\bibitem[KZ99]{Klemm:1999gm}
A. Klemm and E. Zaslow E, \emph{ Local mirror symmetry at higher genus}. AMS IP STUDIES IN ADVANCED MATHEMATICS. 2001;23:183-208.

\bibitem[Lho18]{Lho:2018}
H.~{Lho}, \emph{Gromov-Witten invariants of Calabi-Yau manifolds with two K\"ahler parameters}.  {arXiv:1804.04399 [math.AG]}.


\bibitem[Liu02]{liu02}
Melissa Chiu-Chu Liu, \emph{Moduli of {J}-holomorphic curves with Lagrangian
  boundary conditions and open Gromov-Witten invariants for an
  $S^1$-equivariant pair}. 	arXiv:math/0210257 [math.SG].

\bibitem[LP18]{Lho:2017}
H.~{Lho} and R.~{Pandharipande}, \emph{{Stable quotients and the holomorphic
  anomaly equation}}. Adv. Math. 332, 349--402 (2018).

\bibitem[Mai09]{Maier:2009}
Robert~S. Maier, \emph{On rationally parametrized modular equations}. J.
  Ramanujan Math. Soc. \textbf{24} (2009), no.~1, 1--73. \MR{2514149
  (2010f:11060)}

\bibitem[Mai11]{Maier:2011}
\bysame, \emph{Nonlinear differential equations satisfied by certain classical
  modular forms}, Manuscripta Math. \textbf{134} (2011), no.~1-2, 1--42.
  \MR{2745252 (2012d:11095)}

\bibitem[Moh02]{Mohri:2001zz}
Kenji Mohri, \emph{{Exceptional string: Instanton expansions and Seiberg-Witten
  curve}}, Rev.Math.Phys. \textbf{14} (2002), 913--975.

\bibitem[Ran77]{Rankin:1977ab}
Robert~A. Rankin, \emph{Modular forms and functions}, Cambridge University
  Press, Cambridge, 1977.

\bibitem[Sch12]{Schoeneberg:2012elliptic}
Bruno Schoeneberg, \emph{Elliptic modular functions: an introduction}, vol.
  203, Springer Science \& Business Media, 2012.

\bibitem[Sil09]{Silverman:2009arithmetic}
Joseph~H Silverman, \emph{The arithmetic of elliptic curves}, vol. 106,
  Springer Science \& Business Media, 2009.

\bibitem[Sti97]{Stienstra:1997resonant}
Jan Stienstra, \emph{Resonant Hypergeometric Systems and Mirror Symmetry}.
 Proceedings of the Taniguchi Symposium 1997. Integrable Systems and Algebraic Geometry. World Scientific, 1998.

\bibitem[Sti06]{Stienstra:2005wy}
\bysame, \emph{Mahler measure variations, {E}isenstein series and instanton
  expansions}, Mirror symmetry. {V}, AMS/IP Stud. Adv. Math., vol.~38, Amer.
  Math. Soc., Providence, RI, 2006, pp.~139--150. \MR{2282958 (2008d:11095)}

\bibitem[Tak01]{Takhtajan:2001free}
Leon~A Takhtajan, \emph{Free bosons and tau-functions for compact Riemann
  surfaces and closed smooth Jordan curves. current correlation functions},
  Letters in Mathematical Physics \textbf{56} (2001), no.~3, 181--228.

\bibitem[Tyu78]{Tyurin:1978periods}
Andrei~Nikolaevich Tyurin, \emph{On periods of quadratic differentials},
  Russian mathematical surveys \textbf{33} (1978), no.~6, 169--221.

\bibitem[{Wit}93]{Witten:1993}
E.~{Witten}, \emph{{Phases of N = 2 theories in two dimensions}}, Nuclear
  Physics B \textbf{403} (1993), 159--222.

\bibitem[YY04]{Yamaguchi:2004bt}
Satoshi Yamaguchi and Shing-Tung Yau, \emph{{Topological string partition
  functions as polynomials}}, JHEP \textbf{0407} (2004), 047.

\bibitem[Zag08]{Zagier:2008}
Don Zagier, \emph{Elliptic modular forms and their applications}, The 1-2-3 of
  modular forms, Universitext, Springer, Berlin, 2008, pp.~1--103. \MR{2409678
  (2010b:11047)}

\bibitem[Zho14]{Zhou:2014thesis}
Jie Zhou, \emph{{Arithmetic Properties of Moduli Spaces and Topological String
  Partition Functions of Some Calabi-Yau Threefolds}}, Harvard Ph. D. Thesis
  (2014).

\end{thebibliography}

\providecommand{\bysame}{\leavevmode\hbox to3em{\hrulefill}\thinspace}
\providecommand{\MR}{\relax\ifhmode\unskip\space\fi MR }
\providecommand{\MRhref}[2]{%
  \href{http://www.ams.org/mathscinet-getitem?mr=#1}{#2}
}
\providecommand{\href}[2]{#2}

\bigskip{}

\noindent{\small Beijing International Center for Mathematical Research, Peking University,
5 Yiheyuan Road, Beijing 100871, China}

\noindent{\small Email: \tt bohanfang@gmail.com }

\medskip{}

\noindent{\small Department of Mathematics, University of Michigan, 2074 East Hall, 530 Church Street,
Ann Arbor, MI 48109, USA}

\noindent{\small  E-mail: \tt ruan@umich.edu}

\medskip{}

\noindent{\small Department of Mathematics, University of Michigan, 2074 East Hall, 530 Church Street,
	Ann Arbor, MI 48109, USA}

\noindent{\small E-mail: \tt yczhang15@pku.edu.cn}

\medskip{}

\noindent{\small Yau Mathematical Sciences Center, Jinchunyuan West Building,
Tsinghua University, Beijing 100084, China}

\noindent{\small Email: \tt jzhou2018@mail.tsinghua.edu.cn}

\end{document}